\newcommand{\mycomment}[1]{}
\def\UseBibLatex{1}
\newcommand{\UsePackage}[1]{%
  \IfFileExists{styles/#1.sty}{%
      \usepackage{styles/#1}%
   }{%
      \IfFileExists{../styles/#1.sty}{%
         \usepackage{../styles/#1}%
      }{%
         \usepackage{#1}%
      }%
   }%
}
\newcommand{\MThanks}[1]{%
   \thanks{Partially supported by NSF grant DMS-2154101
   }%
}
\theoremstyle{plain}%
\newtheorem{theorem}{Theorem}[section]
\newtheorem{lemma}[theorem]{Lemma}
\newtheorem{corollary}[theorem]{Corollary}
\newtheorem{claim}[theorem]{Claim}%
\newtheorem{proposition}[theorem]{Proposition}
\theoremstyle{plain}%
 \newtheorem{construction}[theorem]{Construction}
\newtheorem*{remark:unnumbered}[theorem]{Remark}%
\newtheorem{remark}[theorem]{Remark}%
\newtheorem{definition}[theorem]{Definition}
\newtheorem{example}[theorem]{Example}
\newtheorem{fact}[theorem]{Fact}
\newcommand{\myqedsymbol}{\rule{2mm}{2mm}}
\theoremstyle{nonumberplain}%
\newtheorem{proof}{Proof:}%
\providecommand{\emphind}[1]{}%
\renewcommand{\emphind}[1]{\emph{#1}\index{#1}}
\definecolor{blue25emph}{rgb}{0, 0, 11}
\providecommand{\emphic}[2]{}
\renewcommand{\emphic}[2]{\textcolor{blue25emph}{%
      \textbf{\emph{#1}}}\index{#2}}
\providecommand{\emphi}[1]{}%
\renewcommand{\emphi}[1]{\emphic{#1}{#1}}
\definecolor{almostblack}{rgb}{0, 0, 0.3}
\providecommand{\emphw}[1]{}%
\renewcommand{\emphw}[1]{{\textcolor{almostblack}{\emph{#1}}}}%
\providecommand{\emphOnly}[1]{}%
\renewcommand{\emphOnly}[1]{\emph{\textcolor{blue25}{\textbf{#1}}}}
\newcommand{\HLink}[2]{\hyperref[#2]{#1~\ref*{#2}}}
\newcommand{\HLinkSuffix}[3]{\hyperref[#2]{#1\ref*{#2}{#3}}}
\newcommand{\thmlab}[1]{{\label{theo:#1}}}
\newcommand{\thmref}[1]{\HLink{Theorem}{theo:#1}}
\newcommand{\corlab}[1]{\label{cor:#1}}
\newcommand{\corref}[1]{\HLink{Corollary}{cor:#1}}%
\providecommand{\deflab}[1]{}
\renewcommand{\deflab}[1]{\label{def:#1}}
\newcommand{\defref}[1]{\HLink{Definition}{def:#1}}
\providecommand{\eqlab}[1]{}%
\renewcommand{\eqlab}[1]{\label{equation:#1}}
\newcommand{\remove}[1]{}%
\newlist{compactenumA}{enumerate}{5}%
\setlist[compactenumA]{topsep=0pt,itemsep=-1ex,partopsep=1ex,parsep=1ex,%
   label=(\Alph*)}%
\newlist{compactenuma}{enumerate}{5}%
\setlist[compactenuma]{topsep=0pt,itemsep=-1ex,partopsep=1ex,parsep=1ex,%
   label=(\alph*)}%
\newlist{compactenumI}{enumerate}{5}%
\setlist[compactenumI]{topsep=0pt,itemsep=-1ex,partopsep=1ex,parsep=1ex,%
   label=(\Roman*)}%
\newlist{compactenumi}{enumerate}{5}%
\setlist[compactenumi]{topsep=0pt,itemsep=-1ex,partopsep=1ex,parsep=1ex,%
   label=(\roman*)}%
\newlist{compactitem}{itemize}{5}%
\setlist[compactitem]{topsep=0pt,itemsep=-1ex,partopsep=1ex,parsep=1ex,%
   label=\ensuremath{\bullet}}%
\providecommand{\BibLatexMode}[1]{}
\providecommand{\BibTexMode}[1]{}
  \renewcommand{\BibLatexMode}[1]{}
  \renewcommand{\BibTexMode}[1]{#1}
  \renewcommand{\BibLatexMode}[1]{#1}
  \renewcommand{\BibTexMode}[1]{}
\numberwithin{figure}{section}%
\numberwithin{table}{section}%
\numberwithin{equation}{section}%
\begin{document}

\title{Merges of Smooth Classes and Their Properties}

\author{Morgan Bryant\MThanks\\\\University of Maryland}
\maketitle

\begin{abstract}
Given two Fraïssé-like classes with generic limits, we ask whether we can merge the two classes into one class with a generic limit. We study the properties of these merges and their generics, as well as their connections to structural Ramsey theory and the Hrushovski property (EPPA).
\end{abstract}

\section{Introduction}
Given a countable, relational language $\mathcal{L}$, a pair $(K,\leq)$ is a \textit{smooth class} if $K$ is a class of finite $\mathcal{L}$-structures whose elements are related by $\leq$, which is determined by universal formulas (see \S2). \textit{Fraïssé classes} are smooth classes whose relation is simply $\subseteq$, and these Fraïssé classes are the classes used in Fraïssé's original constructions. The main motivation for this work, and many other works on smooth classes, comes from classes of Shelah-Spencer sparse graphs (see Example \ref{hrushovskigraph}). These particular classes have served as counterexamples in several model theoretic settings, and have proved to have some interesting properties. The results of this paper were inspired by observations made when studying these classes of graphs. 

Fraïssé limits/generics are $\omega$-categorical, ultrahomogeneous structures with respect to a Fraïssé class, and they are well-understood, classical objects. The generics (also called limits) of proper smooth classes, on the other hand, are far more oblique and less free objects. The goal of this paper is to better understand general smooth classes and their generics in both the strictly model-theoretic sense and in applications of model theory. 

In \S2, we will study certain expansions of smooth classes and their generics by way of \textit{merging smooth classes}. This is done by taking two smooth classes $K_1$ and $K_2$ in the languages $\mathcal{L}_1$ and $\mathcal{L}_2$ and forming a new smooth class, $K_1\circledast K_2$, called the \textit{merge} of $K_1$ and $K_2$, in the language $\mathcal{L}_1\cup \mathcal{L}_2$ such that for $i=1,2$ and all $A\in K_1\circledast K_2$, $A|_{\mathcal{L}_i} \in K_i$.  In some cases, $K_1\circledast K_2$ has a generic $M^*$.  

When $K_1$ and $K_2$ are closed under substructure, it is known by \cite{Evans_2019} that for $i=1,2$, $M^*|_{\mathcal{L}_i}\cong  M_i$, where $M_i$ is the generic of $K_i$. We prove that this can be strengthened by assuming that each of $K_1$ and $K_2$ additionally have \textit{parallel strongness} and \textit{smooth intersections} (defined in \S2.1.1).  In this case,  we show that for any $C^*\subseteq M^*$ an infinite set definable in $M^*$ by an existential $\mathcal{L}_2$-formula, $C^*|_{\mathcal{L}_1} \cong M_1$. We consider this to be the main theorem from the study of merging smooth classes, and it has many consequences, as described in \S 2.3.1. In particular, this theorem provides information about the generics of the original classes $K_1$ and $K_2$.

In \S2.3 we study which model theoretic properties are transferred from the generics $M_1$ and $M_2$ to $M^*$. By strengthening a theorem in \cite{KL}, we show that if $M_1$ and $M_2$ are both atomic, then $M^*$ is atomic. This does not hold for saturation; a merge of classes of Shelah-Spencer graphs gives a counterexample to this transfer. 

\bigskip
In \S3, we turn to structural Ramsey theory and study smooth classes and their merges in this context. Merges of Fraïssé classes have been the quintessential examples of classes with the \textit{Ramsey property} (defined in \S3). In \cite{kechris2004fraisselimitsramseytheory}, for example, Fraïssé classes are often merged with the Fraïssé class of all finite linear orders as a way to "rigidify" the classes. The KPT correspondence, which is the main result of \cite{kechris2004fraisselimitsramseytheory}, gives an equivalence between Fraïssé classes of rigid structures with the Ramsey property and their generics having an extremely amenable automorphism group. This correspondence has drummed up much of the recent interest in classes with the Ramsey property. The correspondence holds for smooth classes of rigid structures, as shown in \cite{ghadernezhad2016automorphismgroupsgenericstructures}.

We are interested in which smooth classes and their merges have the Ramsey property. This is motivated by \cite{bodirsky2012newramseyclassesold}, in which it is shown that whenever $K_1$ and $K_2$ are Fraïssé classes of rigid structures with the Ramsey property, then $K_1\circledast K_2$ has the Ramsey property. It remains unclear as to when this result extends to smooth classes. An intermediate step in the direction of this goal is studying which classes and their merges have the \textit{Hrushovski property} (often written as EPPA) (defined in \S3.1). 

While it is known by \cite{8d041ed3-5ed1-3e56-9a99-3f73c69d33f4} that any Fraïssé class with the \textit{free amalgamation property} has EPPA, (\cite{Evans_2019}, \cite{ghadernezhad2016automorphismgroupsgenericstructures}) have shown that this does not extend to smooth classes. In \S3.2, we give an explicit family of smooth classes with the free amalgamation property and prove these classes and their merges have EPPA. This expands upon the ideas given for a specific smooth class in (\cite{Evans_2019},\cite{Evans_2021}) and uses the strongest existing EPPA results from \cite{hubička2022eppaclassesstrengtheningsherwiglascar}. We also put a smooth relation on the class of infinitely many equivalence relations given by \cite{https://doi.org/10.1002/malq.201400036} and generalize the argument in \cite{https://doi.org/10.1002/malq.201400036} to obtain a non-trivial smooth class without the free amalgamation property which has EPPA.

\section{Merging Smooth Classes}
\subsection{Background}
In this section, we will give the background needed to understand many of the results in this paper. Smooth classes, which also go under the name of "strong classes", are a generalization of Fraïssé's original constructions. The definition of these classes can vary from author to author. We choose to use one of the most general versions of the definition of smooth classes for the most general results. 
\subsubsection{Smooth Classes}
For the entirety of this paper, we will consider only countable, relational languages. This is done out of convenience; many of the constructions in this section and in section \ref{mergessection} still apply when the languages contain functions. 
We begin with recalling the following definition from \cite{KL}:
\begin{definition}
    Let $\mathcal{L}$ be a relational language. Let $K$ be a class of finite $\mathcal{L}$-structures which is closed under isomorphism. We say the pair $(K,\leq)$, where $\leq$ is a binary relation on the elements of $K$, is a \textit{smooth class} if 
    \begin{enumerate}
        \item $\leq$ is transitive and for all $A$, $B \in K$, $A\leq B \Rightarrow A\subseteq B$
        \item For all $B\in K$ and every enumeration $\overline{b}:=(b_1,\dots, b_n)$ of $B$, there is a (possibly infinite) set $\Phi_{\overline{b}}(\overline{x})$  of universal $\mathcal{L}$-formulas with $|\overline{x}| = n$ such that $\Delta_{\overline{b}}\subseteq\Phi_{\overline{b}}(\overline{x})$, where $\Delta_{\overline{b}}$ is the atomic diagram of $B$ according to the enumeration $\overline{b}$, such that for any $C\in K$ with $B\subseteq C$, $$B\leq C \Leftrightarrow C\models \Phi_{\overline{b}}(\overline{b}) $$
        \item  For $A, B\in K$, if $\overline{a} = (a_1,\dots, a_n)$ is an enumeration of $A$ and there exists an isomorphism $f:A\rightarrow B$, then for the enumeration $\overline{b} = (f(a_1),\dots, f(a_n))$ of $B$, we require that $\Phi_{\overline{a}} = \Phi_{\overline{b}}$. 
        \item $\emptyset \in K$ and for every $A\in K$, $\emptyset\leq A$
    \end{enumerate}
\end{definition}
Clearly, any class containing $\emptyset$ and equipped with $\subseteq$ as its relation is a smooth class. We now give an example of a smooth class whose relation is not $\subseteq$.  This particular class will appear in several parts of this paper:
\begin{example}
\label{hrushovskigraph}
    Let $\alpha \in (0,1)$, and $\mathcal{L}_\alpha$ a finite, relational language. For any $\mathcal{L}_\alpha$-structure $A$ on which every relation $R\in \mathcal{L}_\alpha$ is symmetric and irreflexive, define the \textit{dimension function} $\delta_\alpha$ on $A$ as $$\delta_\alpha(A) := |A| - \alpha \cdot \sum_{R\in \mathcal{L}} N_R(A) $$
    where $N_R(A)$ is the number of distinct subsets of $A$ on which $R$ holds. 
    Define the class $(K_\alpha, \leq_\alpha)$ as $$K_\alpha = \{A: \delta_\alpha(A') \geq 0 \text{ for all } A'\subseteq A\}$$
    and set $$A\leq_{\alpha} B \Leftrightarrow \delta_\alpha(B') - \delta_\alpha(A) \geq 0 \text{ for all } B' \text{ such that } A\subseteq B' \subseteq B$$ 
    $(K_\alpha, \leq_\alpha)$ is a well known smooth class and is called the class of Shelah-Spencer $\alpha$-graphs in $\mathcal{L}_\alpha$. 
    \end{example}
Smooth classes are meant to be a generalization of classes used in Fraïssé's constructions. In particular, these classes may also have limits as in Fraïssé constructions. To see this, we will need to define the usual properties of a class with respect to the relation $\leq$ as opposed to $\subseteq$:
\begin{definition}
Let $(K,\leq)$ be a smooth class.
\begin{enumerate}
    \item $(K,\leq)$ has the \textit{amalgamation property} (AP) if, for any $A,B,C\in K$ such that there exist embeddings $h_1:A\rightarrow C$ and $h_2: A\rightarrow B$ where $h_1(A)\leq C$ and $h_2(A)\leq C$, then there exists some $D\in K$ such that there exists embeddings $f:B\rightarrow D$ and $g:C\rightarrow D$ with $f(B)\leq D$, $g(C)\leq D$, and $g\circ h_1=f\circ h_2$. 

    We say $(K,\leq)$ has \textit{disjoint amalgamation} (dAP) if we can find a $D\in K$ and embeddings $f:B\rightarrow D$, $g:C\rightarrow D$ with $f(B)\leq D$, $g(C)\leq D$, $f\circ h_2=g\circ h_1$, and $f(B)\cap g(C) = f\circ h_2(A)$.
    \item For any $A,B,C\in K$ with $A\subseteq B, C$, the structure $C*_A B$ is defined as the structure with universe $B\cup C$ where the only relations between $B$ and $C$ are contained in $A$. We say $(K,\leq)$ has \textit{free amalgamation} (fAP) if $A,B,C\in K$ and $A\leq B,C$, then $B,C\leq C*_A B$ and $C*_A B\in K$.   
    \item $(K,\leq)$ has \textit{parallel strongness} (PS) if for any $A,B,C\in K$ such that $A\subseteq C$ and $A\leq B$, there exists some $D\in K$ and embeddings $f:C\rightarrow D$ and $g:B\rightarrow D$ such that $f(C)\leq D$, $g(B)\subseteq D$,  and $g(A) = f(A)$.  $(K,\leq)$ has \textit{disjoint parallel strongness} (dPS) if this can be done so that $f(C)\cap g(B) = g(A)=f(A)$. \\
\textbf{Note:} This property is sometimes referred to as \textit{full amalgamation} in the literature. (e.g., in \cite{BALDWIN19961})
    \item $(K,\leq)$ has \textit{smooth intersections} if for every $A,B,C\in K$,  $A\leq B \Rightarrow A\cap C \leq B\cap C$\\
\textbf{Note:} Some authors assume this property as part of the definition of a smooth class. (e.g., in \cite{BALDWIN19961}, \cite{ghadernezhad2016automorphismgroupsgenericstructures})

\end{enumerate}
\end{definition}
Only AP is required for a generalization of Fraïssé's theorem; dPS, fAP, and dAP will have greater roles in the next sections.  It is worth noting that the classes in Example \ref{hrushovskigraph} have dPS, fAP, and smooth intersections, as shown in \cite{BALDWIN19961}, and we will use this fact throughout this paper.  

We call a smooth class $(K,\leq)$ a \textit{Fraïssé class} if for $A,B\in K$, $A\leq B \Leftrightarrow A\subseteq B$, $K$ has AP, and is closed under substructure. These classes have traditional Fraïssé limits, and they vacuously have smooth intersections and PS.  
\bigskip

We must now define limits, or generics, for any smooth class, and then give conditions for the existence of a generic of a smooth class.  

\bigskip
Suppose $(K,\leq)$ is a smooth class. Given an $\mathcal{L}$-structure $M$, for $A\in K$ with $A\subseteq M$, we write $A\leq M$ if and only if for any enumeration $\overline{a}$ of $A$, $M\models \Phi_{\overline{a}}(\overline{a})$.  
\begin{definition}
    \deflab{generic}
\bigskip
    We say an $\mathcal{L}$-structure $M$ is a \textit{generic} for $(K,\leq)$ if 
    \begin{enumerate}
        \item There exist $\{A_i\}_{i\in\omega}$ with $A_i \in K$ such that $M = \bigcup_{i\in\omega} A_i$ and $A_i\leq A_j$ when $i\leq j$. 
        \item If $A\leq M$ and $ A\leq B$,  for $A,B\in K$, then there is an embedding $f: B \rightarrow M$ such that $f|_{A} = Id_A$ and $f(B)\leq M$. 
    \end{enumerate}
\end{definition}

This is essentially a word-for-word generalization of a Fraïssé limit. The following holds essentially by Fraïssé's original proofs:
\begin{proposition}
    \thmlab{smoothgeneric}
    A smooth class $(K,\leq)$ has a generic $M$ if and only if $(K,\leq)$ has AP and $K$ contains only countably many isomorphism types. Moreover, any two generics of a smooth class $(K,\leq)$ are isomorphic. 
\end{proposition}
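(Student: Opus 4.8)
The plan is to transcribe Fraïssé's classical proof, isolating at the outset the two elementary facts about $\leq$ that let the smooth setting inherit it. Since each $\Phi_{\overline{b}}$ consists of \emph{universal} formulas, $\leq$ passes to substructures: if $A\leq B$ in $K$ and $A\subseteq B'\subseteq B$ with $B'\in K$, then $A\leq B'$, because a universal formula true of $\overline{b}$ in $B$ remains true of $\overline{b}$ in $B'$. For the same reason, if $M$ is an $\mathcal{L}$-structure, $A\in K$ with $A\subseteq M$, and $A\leq A_i$ for cofinally many $i$ along a chain $M=\bigcup_i A_i$, then $A\leq M$: to verify $M\models\phi(\overline{a})$ for universal $\phi\in\Phi_{\overline{a}}$ one only needs each instantiating tuple to satisfy the quantifier-free matrix, and any such tuple already lies in some $A_i$ with $A\leq A_i$, where the instance holds and, being quantifier-free, persists upward to $M$; in particular $\emptyset\leq M$ for such a union, since $\emptyset\leq A_i$ for every $i$. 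Together with transitivity of $\leq$ this says that the finite strong substructures of a structure presented as in clause~1 of the definition of a generic are exactly those $A\in K$ that are strong in some member of the chain. Finally, amalgamation over $\emptyset$ (legitimate because $\emptyset\in K$ and $\emptyset\leq A$ for all $A\in K$) supplies a joint embedding property for free, so no extra hypothesis is needed on that account.

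For the forward direction, let $M=\bigcup_i A_i$ be a generic with $A_i\leq A_j$ for $i\leq j$. Then $M$ is a countable union of finite sets, hence countable; applying the extension property with $\emptyset\leq M$ and $\emptyset\leq B$ shows every $B\in K$ embeds into $M$, and a countable structure realizes only countably many isomorphism types of finite substructures, so $K$ has only countably many isomorphism types. For AP, take $A\leq B$ and $A\leq C$ in $K$; renaming along the given embeddings we may assume $A\subseteq B$, $A\subseteq C$ and $B\cap C=A$. Embed $A$ strongly into $M$ via $\emptyset\leq M$, then (transporting $A\leq C$ and $A\leq B$ across this embedding using clause~3) extend successively to strong embeddings of $C$ and of $B$ into $M$ agreeing on $A$. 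The union of the two images is a finite subset of $M$, hence lies inside some $A_j\leq M$ with $A_j\in K$; taking $D=A_j$ and using the first preservation fact to see that the images of $B$ and $C$ are strong in $D$ gives the required amalgam.

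For the converse, assume AP and countably many isomorphism types, and build a chain $\emptyset=A_0\leq A_1\leq A_2\leq\cdots$ in $K$ by a standard bookkeeping over all triples (a stage $n$, a strong substructure $A$ of $A_n$, an isomorphism type $B\in K$): when $A\leq B$, amalgamate $B$ with the current $A_m$ over $A$ — legitimate since $A\leq A_n\leq A_m$ by transitivity — to obtain $A_{m+1}\geq A_m$ containing a strong copy of $B$ over $A$. Put $M=\bigcup_i A_i$, a countable structure; clause~1 of the definition of a generic is immediate, and for clause~2, given $A\leq M$ and $A\leq B$ in $K$, finiteness of $A$ gives $A\subseteq A_n$ for some $n$, hence $A\leq A_n$ by the first preservation fact, and the bookkeeping eventually treats this pair, yielding a strong copy of $B$ over $A$ in some $A_{m+1}$, which is strong in $M$ by transitivity and the second preservation fact. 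Uniqueness is the usual back-and-forth: for generics $M$ and $N$, build an increasing union of isomorphisms between finite strong substructures, at each step writing a prescribed point of $M$ (or $N$) inside some member $A_j$ of the relevant chain, noting $A\leq A_j$ by the first preservation fact, transporting $A_j$ across the current isomorphism via clause~3, and using the extension property of $N$ (or $M$) to embed it strongly; the limit map is the desired isomorphism.

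The bookkeeping and the repeated ``transport a strong pair across clause~3'' steps are routine. The one place that is not a verbatim copy of Fraïssé's argument — and the step I would check most carefully — is the behavior of $\leq$ under unions of chains: the two preservation facts of the first paragraph, and the consequent identification of the strong finite substructures of $M=\bigcup_i A_i$ with those governed by the approximating chain. Both hinge on $\leq$ being defined by universal formulas together with transitivity, and once they are secured the remainder of the proof is mechanical.
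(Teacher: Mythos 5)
Your proposal is correct and follows exactly the route the paper intends: the paper gives no details, stating the result ``holds essentially by Fraïssé's original proofs,'' and your write-up is precisely that adaptation, with the two genuinely non-verbatim points --- downward preservation of $\leq$ to substructures via universality of $\Phi_{\overline{a}}$, and upward preservation along unions of $\leq$-chains --- checked correctly. Nothing further is needed beyond the routine bookkeeping you already flag as standard.
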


As noted before, the study of the generics of smooth classes will be a central throughout this paper. In the next section, we will discuss the idea of combining smooth classes and studying the resulting expansions of their generics. 

\subsection{Merges}
\label{mergessection}
Our main motivation in the study of merges was to study certain, arguably natural, expansions of smooth classes, particularly the classes of  Shelah-Spencer graphs. For any $\alpha\in (0,1)$, the generic $M_\alpha$ of the Shelah-Spencer class $(K_\alpha,\leq_\alpha)$ (see \ref{hrushovskigraph}) is well understood and its theory has a concrete $\forall_2$-axiomatization given by (\cite{axiomatization},\cite{gunatilleka2018theoriesbaldwinshihypergraphsatomic}). In studying expansions of these graphs, it became increasingly clear that certain expansions lead to the discovery of interesting properties of the generic $M_\alpha$. This launched an investigation that led to the study of \textit{merges of smooth classes}, which will be defined in this section. 
\subsubsection{The Set-Up}
We now turn to the problem of combining smooth classes. The idea of combining Fraïssé classes is not new; in fact, it is a rather natural method of constructing new Fraïssé classes. This section can be thought of as a generalization of combining Fraïssé classes, but there are a number of technicalities brought on by the smooth relations that we must deal with.

\begin{definition}
    Fix languages $\mathcal{L}_i$, $i\in \omega$ where $\mathcal{L}_i\cap \mathcal{L}_j =\emptyset \Leftrightarrow i\neq j$. Fix a family of smooth classes $\{(K_i,\leq_i): i\in\omega\}$ such that each $(K_i,\leq_i)$ is a class of $\mathcal{L}_i$-structures. We define a new smooth class $(K^*,\leq_*)$ in the language $\mathcal{L}^*:= \bigcup_{i\in I} \mathcal{L}_i$ as follows: $$K^* = \{A: A|_{\mathcal{L}_i} \in K_i\;\forall i \in I\}$$ and for $A,B\in K^*$, $$A\leq_*B \Leftrightarrow A|_{\mathcal{L}_i} \leq_i B|_{\mathcal{L}_i} \;\; \forall i\in I$$
     We call the class $(K^*,\leq_*)$ defined above the \textit{merge} of $\{(K_i,\leq_i):i\in I\}$. 
\end{definition}
\textbf{Notation:} For smooth classes $(K_1,\leq_1)$ and $(K_2,\leq_2)$, we will write $K_1 \circledast K_2$ to denote the merge of the classes under the relation $\leq_*$. For the set of classes $\{(K_i,\leq_i):i\in I\}$, we will write $\circledast_{i\in I} K_i$ to denote the merge of these classes under the relation $\leq_*$. 

\bigskip
It is clear from the definition of smooth classes that the merge of smooth classes will be itself a smooth class. Let $(K^*,\leq_*)$ be the merge of the family $\{(K_i,\leq_i): i\in I\}$. Let $A\in K^*$ and let $\overline{a}$ be an enumeration of $A$. For $i\in I$, let $\Phi^i_{\overline{a}}$ denote the set of universal $\mathcal{L}_i$-formula such that for all $B\in K_i$, $B\models \Phi^i_{\overline{a}}(\overline{a}) \Leftrightarrow A|_{\mathcal{L}_i}\leq_*B$. Let $\Phi^*_{\overline{a}}$ denote the set of universal $\mathcal{L}^*$-formulas such that for all $B\in K^*$, $B\models \Phi^*_{\overline{a}}(\overline{a}) \Leftrightarrow A\leq_*B$. Notice that $\Phi^*_{\overline{a}}$ can be chosen of the form $\bigcup_{i\in I}\Phi^i_{\overline{a}}$.

\bigskip
We now give sufficient conditions for the existence of a generic of the merge. The conditions below can be much more flexible based on which classes one is attempting to merge.  Essentially, we need to prove that the merge has AP.  To get this, we must have that each class in the merge "agrees" on a cardinality of an amalgamation.  We also need that the merged class does indeed have infinitely many elements. 

This is essentially what the below definition ensures:

\bigskip
\textbf{Notation:} For a smooth class $(K,\leq)$, define $$C_{(K,\leq)} :=\{n: A\in K, \;\; |A|=n\}$$
\begin{definition}
    For languages $\mathcal{L}_i$, $i\in \omega$ where $\mathcal{L}_i\cap \mathcal{L}_j =\emptyset \Leftrightarrow i\neq j$, smooth classes $(K_i,\leq_i)$ which are classes of $\mathcal{L}_i$-structures, and $I\subseteq \omega$, we say the set of smooth classes $\{(K_i,\leq_i):i\in I\}$ has \textit{uniform dAP} if:
    \begin{enumerate}
        \item $I\subseteq \omega$ and for all $i, j\in I$, 
        $$C_{(K_i,\leq_i)} = C_{(K_j, \leq_j)}$$
        \item For any $n,m,k \in C_{(K_i, \leq_i)}$, there exists some $j\in C_{(K_i,\leq_i)}$ such that for \textbf{any} $i\in I$, and any $A_i$, $B_i$, $C_i \in K_i$ where $|A_i| = n$, $|B_i| = m$, $|C_i| = k$ and $A_i\leq_i B_i, C_i$, then there exists a disjoint  amalgam $D_i\in K_i$ of $B_i$ and $C_i$ over $A_i$ with $A_i, B_i, C_i \leq_i D_i$ and $|D_i| = j$.

    \end{enumerate}
\end{definition}

\begin{remark}
When all classes in $\mathcal{S}:=\{(K_i,\leq_i): i\in I\}$ with $I\subseteq \omega$  have closure under substructure and dAP, $\mathcal{S}$ has uniform dAP trivially. Because of this and \thmref{mergeexistencethm} we will frequently work within this framework in the context of merging classes.   
\end{remark}
\begin{theorem}
\thmlab{mergeexistencethm}If a set of smooth classes $\mathcal{S}: = \{(K_i,\leq_i): i\in I\}$ has uniform dAP, then the merge $(K^*,\leq_*)$ has a generic in $\mathcal{L}^*$. 
\end{theorem}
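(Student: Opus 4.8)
The plan is to reduce everything to \HLink{Proposition}{theo:smoothgeneric}: it suffices to show that $(K^*,\leq_*)$ has AP and that $K^*$ contains only countably many isomorphism types. The second point is automatic, since $\mathcal{L}^*=\bigcup_{i\in I}\mathcal{L}_i$ is a countable relational language, so there are only countably many finite $\mathcal{L}^*$-structures up to isomorphism and $K^*$ is a subclass of these; moreover $\emptyset\in K_i$ for every $i$ gives $\emptyset\in K^*$, so $K^*$ is nonempty. Thus all of the work goes into verifying AP, and in fact I expect to obtain the stronger dAP.

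For amalgamation, start with $A,B,C\in K^*$ and embeddings $h_1\colon A\to C$, $h_2\colon A\to B$ with $h_1(A)\leq_* C$ and $h_2(A)\leq_* B$. After relabelling we may assume $A\subseteq B$, $A\subseteq C$, $A\leq_* B$, $A\leq_* C$, and $B\cap C=A$, so that $B\cup C$ is the universe of the set-theoretic pushout of $B$ and $C$ over $A$. Put $n=|A|$, $m=|B|$, $k=|C|$; by part (1) of uniform dAP these all lie in the common value $C_{(K_i,\leq_i)}$, and by part (2) there is a single $j$ — depending only on $n,m,k$, not on $i$ — such that for every $i\in I$ any disjoint-amalgamation instance of these sizes in $K_i$ admits a disjoint amalgam of size $j$ in which all three factors are $\leq_i$-strong. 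Fix a finite set $W\supseteq B\cup C$ with $|W|=j$; this is possible because any disjoint amalgam has size at least $m+k-n=|B\cup C|$, so $j\ge|B\cup C|$, and we adjoin $j-|B\cup C|$ fresh points.

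Now for each $i\in I$, apply uniform dAP to the instance $A|_{\mathcal{L}_i}\leq_i B|_{\mathcal{L}_i}$, $A|_{\mathcal{L}_i}\leq_i C|_{\mathcal{L}_i}$ to obtain a disjoint amalgam $D_i\in K_i$ with $|D_i|=j$ and $A|_{\mathcal{L}_i},B|_{\mathcal{L}_i},C|_{\mathcal{L}_i}\leq_i D_i$. Since the amalgam is disjoint, the underlying set of $D_i$ carries copies of $B|_{\mathcal{L}_i}$ and $C|_{\mathcal{L}_i}$ overlapping exactly in the copy of $A|_{\mathcal{L}_i}$; choose a bijection of this set onto $W$ that is the identity on $B\cup C$ (matching those copies with $B$ and $C$, whose overlap is $A$, mirroring $B\cap C=A$) and transport the $\mathcal{L}_i$-structure along it. Because $\leq_i$ is invariant under isomorphism (clause (3) in the definition of a smooth class), the resulting structure $D_i^{W}$ on $W$ still satisfies $B|_{\mathcal{L}_i},C|_{\mathcal{L}_i}\leq_i D_i^{W}$, and its $\mathcal{L}_i$-reducts on $B$ and on $C$ are exactly $B|_{\mathcal{L}_i}$ and $C|_{\mathcal{L}_i}$. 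Finally let $D$ be the $\mathcal{L}^*$-structure on $W$ with $R^{D}=R^{D_i^{W}}$ for every $R\in\mathcal{L}_i$, $i\in I$; this is well-defined since the $\mathcal{L}_i$ are pairwise disjoint and cover $\mathcal{L}^*$. Then $D|_{\mathcal{L}_i}=D_i^{W}\in K_i$ for all $i$, so $D\in K^*$, and $B|_{\mathcal{L}_i}\leq_i D|_{\mathcal{L}_i}$, $C|_{\mathcal{L}_i}\leq_i D|_{\mathcal{L}_i}$ for all $i$, so $B\leq_* D$ and $C\leq_* D$ by definition of $\leq_*$. The inclusions $B\hookrightarrow D$ and $C\hookrightarrow D$ agree on $A$ and satisfy $B\cap C=A$ inside $D$, so $D$ is a disjoint amalgam; hence $(K^*,\leq_*)$ has dAP, and \HLink{Proposition}{theo:smoothgeneric} produces the generic in $\mathcal{L}^*$.

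The main obstacle — and the reason the hypothesis is phrased as it is — is fusing all of the reduct-amalgams $D_i$ onto one common universe: if the size of the amalgam were allowed to depend on $i$ or on the particular instance, the $D_i$ could not be assembled into a single $\mathcal{L}^*$-structure, which is precisely what the uniformity in part (2) of uniform dAP rules out. The two supporting ingredients are that disjointness lets every $D_i$ be realized on the same fixed set $B\cup C$ together with a fixed pad of new points, and that isomorphism-invariance of each $\leq_i$ permits the relabelling; the remaining verifications (well-definedness of $D$ and reading off $\leq_*$ from the $\leq_i$) are routine bookkeeping.
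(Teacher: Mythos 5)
Your amalgamation argument is essentially the paper's own proof: both reduce to Proposition~\ref{theo:smoothgeneric}, use the uniformity in dAP to obtain reduct amalgams $D_i$ of one common size $j$, and fuse them into a single $\mathcal{L}^*$-structure by transporting each $\mathcal{L}_i$-structure along a bijection and invoking isomorphism-invariance of $\leq_i$ (the paper builds a fresh enumerated set $N$ with embeddings $F,G$ where you realize $B\cup C$ plus padding points inside $W$ -- a purely cosmetic difference), so the dAP part is correct and matches the paper. The one step where you go beyond the paper, the countability of isomorphism types in $K^*$, is not justified as stated: a countably infinite relational language admits continuum many finite structures up to isomorphism (already two points with infinitely many independent binary relations give $2^{\aleph_0}$ types), so ``$\mathcal{L}^*$ is countable'' does not by itself bound the isomorphism types of $K^*$ when $I$ is infinite, and countability of types in each $K_i$ does not automatically pass to the merge either. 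The paper's proof silently leaves this hypothesis of Proposition~\ref{theo:smoothgeneric} implicit, so you have not done worse than the source, but you should either restrict attention to the case where $\mathcal{L}^*$ is finite (or where $K^*$ is otherwise known to have only countably many isomorphism types) or flag this as an additional assumption rather than calling it automatic.
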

\begin{proof}
We must prove, by \thmref{smoothgeneric}, that $(K^*,\leq_*)$ has AP. We will in fact prove that it has dAP.
 For each $i\in I$ and $A\in K^*$ denote $A|_{\mathcal{L}_i}$ by $A_i$. Let $A,B, C \in K^*$ such that there exist embeddings $\alpha: A\rightarrow C$ and $\beta: A\rightarrow B$ for which $\alpha(A) \leq C$ and $\beta(A) \leq B$. For each $i\in I$, let $\alpha_i$ and $\beta_i$ denote the $\mathcal{L}_i$-embeddings $\alpha$ and $\beta$ restricted to the language $\mathcal{L}_i$. 
Because $\mathcal{S}$ has uniform dAP, for each $i\in I$, we may choose $D_i \in K_i$ such that for all $i,j\in I$, $|D_i| = |D_j|$, and for every $i\in I$, there exist $\mathcal{L}_i$-embeddings $f_i: C_i \rightarrow _i D_i$ and $g_i: B_i \rightarrow D_i$ with $f(C_i)\leq_i D_i$, $g(B_i)\leq_i D_i$,  $f_i\circ \alpha_i = g_i \circ \beta_i$, and $f(C_i)\cap f(B_i)= f_i\circ \alpha_i(A_i)$. 
Enumerate $A$ as $\{a_1,\dots, a_t\}$, $B-\beta(A)$ as $\{b_1,\dots, b_n\}$, and $C - \alpha(A)$ as $\{c_1,\dots, c_m\}$. Enumerate each $D_i$ as $$D_i = \{c^i_1,\dots, c^i_m\}\cup \{b^i_1,\dots, b^i_n\} \cup \{a^i_1,\dots, a_t^i\} \cup \{e^i_1,\dots, e^i_j\}$$ where $c^i_k = f_i(c_k)$,  $b^i_k = g_i(b_k)$, and $a_k^i = f_i\circ \alpha_i(a_k)$.
Take a set $N$ with $|N| = |D_i|$ and enumerate $$N = \{w_1,\dots, w_m\}\cup \{d_1,\dots, d_n\} \cup \{s_1,\dots, s_t\}\cup \{e_1,\dots, e_j\}$$ In each language $\mathcal{L}_i$, we will define $N|_{\mathcal{L}_i}$ so that the function $h_i: N \rightarrow D_i$ for $i\in I$ defined $h_i(w_k) = c^i_k$, $h_i(d_k) = b^i_k$, $h_i(s_k) = a_k^i$ and $h_i(e_k) = e^i_k$ is an $\mathcal{L}_i$-isomorphism. It follows that $N\in K^*$ by definition. 

Moreover, $G: B\rightarrow N$ defined as $G(b_k) = d_k$ and $G(\beta(a_k)) = s_k$ and $F: C\rightarrow N$ defined as $F(c_k) = w_k $ and $F(\alpha(a_k)) = s_k$ are both $\mathcal{L}^*$-embeddings by the construction of $N$. It is also clear that $G(B)\leq_* N$ and $F(C)\leq_* N$. Thus, $K^*$ has dAP. 
\end{proof}
Similar arguments to the proof above can show that if every class in $\mathcal{S}$ has dPS and/or fAP, then so does the merge $(K^*,\leq_*)$.

\begin{remark}
The assumptions in \thmref{smoothgeneric} can be loosened, but the assumption that each class has dAP is nearly necessary. Without each class having dAP, the possible merged classes with generics are often nearly trivial and highly constrainted, depending on how "strict" the AP in each class is. For example, let the class $(K,\leq)$ be such that $K$ is the class of all finite linear orders and $A\leq B$ if and only if $A$ is an initial segment of $B$. This class has AP, but certainly not dAP. The only classes $(K_2,\leq_2)$ which can be merged with $(K,\leq)$ and produce a generic of the merge are the classes $(K_2,\leq_2)$ for which $\leq_2$ is $\subseteq$ and every relation in the language holds (or, equivalently, does not hold) on every tuple in every structure of $K_2$. 
\end{remark}
\subsection{Properties of Merges}
\subsubsection{Preserving Generics in the Merge}
Ideally, when we merge classes and obtain a generic of the merged classes, we would like for the generic of the merged classes to be an expansion of each of the original generics. To have this, we must have that the merged class does not "miss" any information from the original classes. A way of ensuring this is to assume the classes are closed under substructure. In many natural applications, this is a reasonable assumption. 

We now prove that this holds under some additional assumptions:

\begin{proposition}
    \label{firstmerge}
    Suppose $(K_1,\leq_1)$ and $(K_2,\leq_2)$ are smooth classes in $\mathcal{L}_1$ and $\mathcal{L}_2$ respectively. Assume that both classes are closed under substructure, and both have dAP and dPS. Let $M^*$ be the generic of the merge $(K^*,\leq_*)$ of $K_1$ and $K_2$. Then for $i=1,2$, $M^*|_{\mathcal{L}_i}\cong M_i$, where $M_i$ is the generic of $(K_i,\leq_i)$.
\end{proposition}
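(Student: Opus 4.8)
The plan is to show that the reduct $M^*|_{\mathcal L_1}$ satisfies both clauses of \defref{generic} for the class $(K_1,\leq_1)$; by the uniqueness part of \thmref{smoothgeneric} this forces $M^*|_{\mathcal L_1}\cong M_1$, and the case of $\mathcal L_2$ is symmetric. Two preliminary observations handle the bookkeeping. First, since $K_1,K_2$ are closed under substructure and have dAP, the pair has uniform dAP (the Remark preceding \thmref{mergeexistencethm}), so $M^*$ exists and in particular $C_{(K_1,\leq_1)}=C_{(K_2,\leq_2)}$. Second, the formulas in the sets $\Phi$ are universal and $\Phi^*_{\overline a}$ may be taken to be $\Phi^1_{\overline a}\cup\Phi^2_{\overline a}$; hence $A\leq_* M^*$ holds iff $A|_{\mathcal L_1}\leq_1 M^*|_{\mathcal L_1}$ and $A|_{\mathcal L_2}\leq_2 M^*|_{\mathcal L_2}$, the relation ``$A\leq_i D$'' descends to any intermediate $A\subseteq D'\subseteq D$ in $K_i$, and each $C_i\leq_* M^*$ when $M^*=\bigcup_j C_j$ is an increasing union of the $C_j$. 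Clause~(1) of \defref{generic} for $M^*|_{\mathcal L_1}$ is then immediate: the reducts $C_i|_{\mathcal L_1}$ lie in $K_1$, satisfy $C_i|_{\mathcal L_1}\leq_1 C_{i+1}|_{\mathcal L_1}$, and have union $M^*|_{\mathcal L_1}$.

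For clause~(2), suppose $A\leq_1 M^*|_{\mathcal L_1}$ and $A\leq_1 B$ with $A,B\in K_1$. First I would pass to a strong finite neighbourhood of $A$ in $M^*$: letting $A$ carry the $\mathcal L^*$-structure it inherits from $M^*$, choose $i$ with $A\subseteq C_i$ and set $E:=C_i$, so $E\leq_* M^*$, $A\subseteq E|_{\mathcal L_1}$, and $A\leq_1 E|_{\mathcal L_1}$ by descent. The goal becomes to construct $B^*\in K^*$ with $E\leq_* B^*$ into which $B$ embeds strongly over $A$; then genericity of $M^*$ (clause~(2) of \defref{generic}) applied to $E\leq_* M^*$ and $E\leq_* B^*$ yields an $\mathcal L^*$-embedding $h\colon B^*\to M^*$ fixing $E$ with $h(B^*)\leq_* M^*$, and $f:=h|_B$ read as an $\mathcal L_1$-map is the embedding we want: it fixes $A\subseteq E$, and $f(B)\leq_1 h(B^*)|_{\mathcal L_1}\leq_1 M^*|_{\mathcal L_1}$ by isomorphism-invariance of $\Phi$ and transitivity of $\leq_1$, so $f(B)\leq_1 M^*|_{\mathcal L_1}$.

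Constructing $B^*$ is the crux, and the step I expect to be the main obstacle. On the $\mathcal L_1$ side it is routine: dAP in $K_1$ applied to $A\leq_1 E|_{\mathcal L_1}$ and $A\leq_1 B$ gives $D_1\in K_1$ with $E|_{\mathcal L_1}\leq_1 D_1$, $B\leq_1 D_1$ and $B\cap E|_{\mathcal L_1}=A$, with exactly $p:=|B\setminus A|$ points outside $E$. The difficulty is to equip the underlying set of $D_1$ with an $\mathcal L_2$-structure that restricts to $E|_{\mathcal L_2}$, keeps $E|_{\mathcal L_2}$ strong, and lies in $K_2$ --- that is, to strongly adjoin exactly $p$ new points to $E|_{\mathcal L_2}$ within $K_2$. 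Here I would use dPS and the cardinality coincidence together: pick $W\in K_2$ with $|W|=p$ (possible since $p\le|B|\in C_{(K_1,\leq_1)}=C_{(K_2,\leq_2)}$ and $K_2$ is closed under substructure), note $\emptyset\leq_2 W$ by the fourth smooth-class axiom, apply dPS in $K_2$ with data $\emptyset\subseteq E|_{\mathcal L_2}$ and $\emptyset\leq_2 W$ to obtain $D_2'\supseteq E|_{\mathcal L_2}$ with $E|_{\mathcal L_2}\leq_2 D_2'$ and a disjoint copy of $W$, then restrict $D_2'$ to the substructure $D_2$ on $E$ together with that copy of $W$; now $D_2\in K_2$, $E|_{\mathcal L_2}\leq_2 D_2$ by descent, and $|D_2|=|E|+p$. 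Identifying the $p$ new points of $D_2$ with $B\setminus A$ while fixing $E$, and declaring $B^*$ to be the $\mathcal L^*$-structure whose $\mathcal L_1$-reduct is $D_1$ and whose $\mathcal L_2$-reduct is this transported copy of $D_2$, gives $B^*\in K^*$ with $E\leq_* B^*$ and $B\leq_1 B^*|_{\mathcal L_1}$, $B\cap E=A$, as required.

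The points that need care are exactly the three used silently above: that ``$\leq_i$'' descends along $A\subseteq D'\subseteq D$ and that each $C_i\leq_* M^*$ (both from universality of the $\Phi$'s), and that $C_{(K_1,\leq_1)}=C_{(K_2,\leq_2)}$ --- the last being automatic here from closure under substructure, and being precisely the additional leverage this argument has over the bare conclusion $M^*|_{\mathcal L_i}\cong M_i$ obtainable from \cite{Evans_2019}. Running the same argument with the roles of $\mathcal L_1$ and $\mathcal L_2$ exchanged finishes the proof.
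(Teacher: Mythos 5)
Your proof is correct and follows essentially the same route as the paper's: verify clauses (1) and (2) of \defref{generic} for the reduct, pick a finite strong $E\leq_* M^*$ containing $A$, amalgamate $B$ with $E|_{\mathcal{L}_1}$ over $A$ via dAP in $K_1$, supply a compatible $\mathcal{L}_2$-structure on the new points keeping $E$ $\leq_2$-strong via dPS in $K_2$, and finish by genericity of $M^*$ together with universality (descent) and transitivity of $\leq_1$. The only deviation is on the $\mathcal{L}_2$ side, and it is harmless (arguably cleaner): you apply dPS over $\emptyset$ with a dummy $W\in K_2$ of size $|B\setminus A|$ and then pass to a substructure, whereas the paper applies dPS over $A$ after first choosing some $B_2\in K_2$ on the universe of $B_1$ with $A\leq_2 B_2$ — your variant sidesteps having to justify the existence of such a $B_2$.
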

\begin{proof}
By symmetry, we need only prove the theorem for $i=1$. 
We must prove that $M^*|_{\mathcal{L}_1}$ satisfies conditions (1) and (2) from \defref{generic}. For $\mathcal{L}^*$-structures $A,B\in K^*$, we write $A\leq_1 B$ if and only if $A|_{\mathcal{L}_1} \leq_1 B|_{\mathcal{L}_1}$. It is easy to see that, since $M^*$ is the generic of $K^*$, $M^*|_{\mathcal{L}_1}$ satisfies (1). It remains to show (2). 

Let $A\leq_1 M^*$ and suppose $A\leq_1 B_1$ for some $B_1\in K_1$. As $M^*$ is the generic of $(K^*,\leq_*)$, we can write $M^* = \bigcup_{n\in\omega} C_n$ where $C_n\leq_* C_{n+1}$ and $C_n\in K^*$. Thus, there is some $n$ for which $A\leq_1 C_n$. By taking an isomorphic copy of $B_1$, we may assume $C_n\cap B_1 = A$. \\
By applying dAP and closure under substructure in $\mathcal{L}_1$, we can find some $D_1\in K_1$ with universe $(C_n\cup B_1)$ such that $D_1$ is a disjoint amalgam over $A$ in $\mathcal{L}_1$ with $C_n, B_1 \leq_1 D_1$. Define $B_2$ so that the universe of $B_2$ is the same as that of $B_1$, and define the $\mathcal{L}_2$-structure of $B_2$ so that $B_2\in K_2$ with $B_2\geq A$ and $B_2\cap C_n = A$ (in the $\mathcal{L}_2$ sense).

Now, in the language $\mathcal{L}_2$, using dPS over $A$, we can find an $\mathcal{L}_2$-structure $D_2\in K_2$ with universe  $C_n\cup B_2 \in K_2$ so that $C_n \leq_2 D_2$, $ B_2\subseteq_2 D_2$, and $|D_1| = |D_2|$.
Define structures $D, B\in K^*$ to be so that $D|_{\mathcal{L}_i} = D_i$, $B|_{\mathcal{L}_i} = B_i$ for $i=1,2$. 

Notice this then implies that $C_n \leq_* D $ by definition of $\leq_*$. By the genericity of $M^*$, there exists some $\mathcal{L}^*$-embedding $g: D\rightarrow M^*$ which is the identity on $C_n$ such that $D^*:= g(D) \leq_* M^*$ and $C_n\leq_* D^*$. Now, $g(B)\leq_1 D^*$ and $A\leq_1 g(B)$. Moreover, $g(B)|_{\mathcal{L}_1} \cong B_1$. By transitivity, $g(B) \leq_1 M^*$. This proves (2). 
\end{proof}

When classes are only assumed to be closed under substructure, it is indeed true that the generic of the merge is an expansion of the original generics. This follows from a set-up and proof given in \S2 of \cite{Evans_2019}, though one must convert their theorems to the language and ideas of merging smooth classes. 
Whenever smooth classes $(K_1,\leq_1)$ and $(K_2,\leq_2)$ are each closed under substructure with dAP,  the merge $K_1 \circledast K_2$ is a \textit{strong extension} of both $K_1$ and $K_2$ in the sense defined in \S 2 of \cite{Evans_2019}. Because $K_1\circledast K_2$ is a strong extension, it is proven that if $M^*$ is the generic of $K_1\circledast K_2$, then $M^*|_{\mathcal{L}_i}\cong M_i$ where $M_i$ is the generic of $(K_i,\leq_i)$.  

The proof of this result from \cite{Evans_2019} and the proof of Proposition \ref{firstmerge} are entirely different. It turns out that the additional assumption that the classes both have dPS gives an even stronger result than Proposition \ref{firstmerge}, which will be shown in \thmref{mainthm}. This was inspired from behavior observed from the merge of a class of Shelah-Spencer  graphs (Example \ref{hrushovskigraph})  with the Fraïssé class of finite equivalence relations. We observed a relationship between equivalence classes in the merge of the generic and the original generic of the class of Shelah-Spencer graphs. The assumption that classes have dPS is modeled after the fact that classes of Shelah-Spencer graphs have dPS.

\bigskip
We will now state and prove \thmref{mainthm}, which we consider to be the main result in our study of merging smooth classes. 

\bigskip
\textbf{Notation:} If $\leq_1$ is a relation on a class in $\mathcal{L}_1$ and $A,B$ are structures in a language $\mathcal{L}\supseteq \mathcal{L}_1$, we write $A\leq_1 B$ for $A|_{\mathcal{L}_1} \leq_1 B|_{\mathcal{L}_1}$. If $A|_{\mathcal{L}_1} \cong B|_{\mathcal{L}_1}$ in $\mathcal{L}_1$, we write $A \cong_{\mathcal{L}_1} B$

\begin{theorem}
\thmlab{mainthm}
 Let $(K_1,\leq_1)$ and $(K_2,\leq_2)$ be smooth classes in languages $\mathcal{L}_1$ and $\mathcal{L}_2$ with generics $M_1$ and $M_2$ respectively. Assume both classes are closed under substructure and have dAP. Suppose $K_1$ has smooth intersections and dPS. Let $M^*$ denote the generic of the merge $(K^*,\leq_*)$ of $K_1$ and $K_2$. Suppose $C^* = \varphi(M^*, \overline{m})$ is infinite, where $\overline{m}\subseteq M^*$ and $\varphi$ is an existential formula in $\mathcal{L}_2$. Then, $C^*|_{\mathcal{L}_1}$ is isomorphic to the generic of $(K_1,\leq_1)$ 
\end{theorem}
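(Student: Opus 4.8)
The plan is to show that $N:=C^*|_{\mathcal{L}_1}$ satisfies clauses (1) and (2) of \defref{generic} for $(K_1,\leq_1)$; by the uniqueness part of \thmref{smoothgeneric} this gives $N\cong M_1$. Throughout I use that $M^*|_{\mathcal{L}_i}\cong M_i$ for $i=1,2$ (valid here since both classes are closed under substructure and have dAP, as recalled after Proposition~\ref{firstmerge}), so I identify $M^*|_{\mathcal{L}_i}=M_i$, and I fix a chain $Y_0\leq_* Y_1\leq_*\cdots$ with $Y_n\in K^*$, $Y_n\leq_* M^*$, $\bigcup_n Y_n=M^*$, so $Y_n|_{\mathcal{L}_i}\leq_i M_i$. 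Two bookkeeping facts get used repeatedly: (a) universal $\mathcal{L}_1$-formulas descend to $\mathcal{L}_1$-substructures, so since $N\subseteq M_1$ we have $A\leq_1 M_1\Rightarrow A\leq_1 N$ for every finite $A\in K_1$ with $A\subseteq C^*$; and (b) because $\varphi$ is existential, membership in $C^*$ is preserved under enlarging the $\mathcal{L}_2$-structure and is certified by a finite witness, so every finite $X\subseteq C^*$ lies inside some $Y_n$ together with the existential $\mathcal{L}_2$-witnesses for $X\subseteq C^*$.

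For clause (1): set $A_n:=(Y_n\cap C^*)$ with the $\mathcal{L}_1$-structure induced from $M^*$. Then $A_n\in K_1$ by closure under substructure and $\bigcup_n A_n=N$. For $A_n\leq_1 A_{n+1}$, apply the smooth intersections property of $K_1$ to $Y_n|_{\mathcal{L}_1}\leq_1 Y_{n+1}|_{\mathcal{L}_1}$ with third argument the finite member $A_{n+1}\in K_1$: this yields $Y_n|_{\mathcal{L}_1}\cap A_{n+1}\leq_1 Y_{n+1}|_{\mathcal{L}_1}\cap A_{n+1}$, and since $A_{n+1}\subseteq Y_{n+1}$ and $Y_n\subseteq Y_{n+1}$ these two sides are exactly $A_n$ and $A_{n+1}$. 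Hence clause (1) holds, and the usual argument (a universal formula true in every link of a chain is true in the union) gives $A_n\leq_1 N$ for all $n$.

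The heart of the argument, and the step I expect to be the main obstacle, is the lemma: for finite $A\in K_1$ with $A\subseteq C^*$, $A\leq_1 N\iff A\leq_1 M_1$. The direction $(\Leftarrow)$ is fact (a). For $(\Rightarrow)$, suppose $A\leq_1 N$ but $A\not\leq_1 M_1$; since $K_1$ has smooth intersections, $\leq_1$-closures exist in $M_1$, so $\overline A:=\mathrm{cl}^1_{M_1}(A)$ is a finite member of $K_1$ with $\overline A\supsetneq A$ and $\overline A\leq_1 M_1$, and the failure of $A\leq_1 M_1$ is witnessed inside $\overline A$. Because the formula set $\Phi^1_{\overline a}$ is isomorphism-invariant, it suffices to produce an $\mathcal{L}_1$-embedding of $\overline A$ into $N$ fixing $A$: such a copy then witnesses $A\not\leq_1 N$, a contradiction. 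To produce it I would take a $\leq_*$-strong finite $Y\supseteq A\cup\overline m$ in $M^*$ carrying the $\mathcal{L}_2$-witnesses for $A\subseteq C^*$; then, using that $C^*$ is infinite and $\varphi$ is existential, select $|\overline A\setminus A|$ further points of $C^*$ outside $Y$ together with a $\leq_*$-strong finite $Y'\supseteq Y$ containing them and their $\mathcal{L}_2$-witnesses; then use dPS of $K_1$ to overlay, on (a suitably size-corrected expansion of) the universe of $Y'$, an $\mathcal{L}_1$-structure in which $Y|_{\mathcal{L}_1}$ is strong and a copy of $\overline A$ sits over $A$ using the chosen $C^*$-points as its new vertices, while keeping the $\mathcal{L}_2$-reduct equal to that of $Y'$; finally, realise this merged $\mathcal{L}^*$-structure back in $M^*$ by genericity, adding any remaining existential witnesses by a further strong extension ($\varphi$ being existential, the new vertices land in $C^*$). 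The delicate points — matching the universe of the dPS-produced $\mathcal{L}_1$-amalgam with the one carrying the prescribed $\mathcal{L}_2$-reduct (here the uniform dAP hypothesis, which fixes common amalgam sizes across the two classes, is meant to be used), and forcing the overlaid copy of $\overline A$ to use exactly the designated $C^*$-points — are precisely where smooth intersections and dPS of $K_1$, together with $\varphi$ being existential, must all be used together.

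Granting the lemma, clause (2) follows. Suppose $A\leq_1 N$ and $A\leq_1 B$ with $A,B\in K_1$. Pick $n$ with $A\subseteq A_n$, so $A\leq_1 A_n$; by dAP of $K_1$ form a disjoint amalgam $B^+\in K_1$ of $A_n$ and $B$ over $A$ with $A_n,B\leq_1 B^+$, so it suffices to realise $B^+$ over $A_n$ as a $\leq_1$-strong substructure of $N$. By the lemma, $A_n\leq_1 M_1$. Now choose a $\leq_*$-strong finite $Y\supseteq A_n\cup\overline m$ in $M^*$ with the $\mathcal{L}_2$-witnesses for $A_n\subseteq C^*$ inside; since $A_n\leq_1 M_1$ we get $A_n\leq_1 Y|_{\mathcal{L}_1}$, so by dAP of $K_1$ we can glue $B^+$ onto $Y|_{\mathcal{L}_1}$ over $A_n$ with both $Y|_{\mathcal{L}_1}$ and the copy of $B^+$ strong in the resulting $\mathcal{L}_1$-amalgam. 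Design the $\mathcal{L}_2$-structure on this amalgam by pulling from $M_2$ a $\leq_2$-strong configuration that places the new vertices inside $\varphi(M_2,\overline m)$ — possible since $C^*$ is infinite — with witnesses among the amalgam's remaining points or added afterwards by a strong extension (here again uniform dAP reconciles the sizes of the two reducts); then embed the merged $\mathcal{L}^*$-structure into $M^*$ over $Y$ with strong image by genericity of $M^*$. The image of $B^+$ is then $\leq_1$-strong in $M_1$ and contained in $C^*$, hence $\leq_1$-strong in $N$ by fact (a); restricting to the copy of $B$ and using transitivity of $\leq_1$ gives the required strong embedding $B\hookrightarrow N$ over $A$. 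Thus $N$ satisfies both clauses of \defref{generic}, so $C^*|_{\mathcal{L}_1}\cong M_1$.
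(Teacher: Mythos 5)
Your overall plan (verify clauses (1) and (2) of \defref{generic} for $N=C^*|_{\mathcal{L}_1}$) and your clause (1) argument agree with the paper, but your clause (2) rests entirely on the key lemma --- for finite $A\subseteq C^*$, $A\leq_1 N$ implies $A\leq_1 M_1$ --- and that lemma is false under the theorem's hypotheses. Take $K_1=K_\alpha$ with $\alpha=1/2$ (edge relation $R$), $K_2$ the Fra\"iss\'e class of finite equivalence relations (relation $E$), and $\varphi(x,m)$ the quantifier-free, hence existential, formula $E(x,m)$, so $C^*$ is the class of $m$. Given any finite $W\leq_* M^*$ with $m\in W$, the extension adding $a_1,a_2$ to the class of $m$ and $x_1,x_2$ to a fresh class, with $R$-edges exactly $x_1x_2$ and $x_ia_j$ for $i,j\in\{1,2\}$ and no relations touching $W$, is a $\leq_*$-strong extension in $K^*$, hence realized by genericity with strong image $D'\leq_* M^*$. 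Writing $\delta$ for the dimension function, $A=\{a_1,a_2\}$ has $\delta(A)=2$ while $\delta(A\cup\{x_1,x_2\})=4-5/2<2$, so $A\not\leq_1 M_1$; yet for any finite $B'$ with $A\subseteq B'\subseteq C^*$, splitting $B'\setminus A$ into its part inside $D'$ (which lies in $W$ and has no edges to $A$) and its part outside $D'$ (whose relative dimension over $D'$, hence over the smaller base, is $\geq 0$ since $D'\leq_1 M_1$) gives $\delta(B')\geq\delta(A)$, i.e.\ $A\leq_1 C^*|_{\mathcal{L}_1}$. The same computation shows $A_n:=Y_n\cap C^*\not\leq_1 M_1$ for every $n$ with $a_1,a_2\in Y_n$, so the exact instances you need (``by the lemma, $A_n\leq_1 M_1$'', which feeds your dAP gluing of $B^+$ onto $Y|_{\mathcal{L}_1}$ over $A_n$) fail: strongness in the trace does not upgrade to strongness in $M_1$, because the $\leq_1$-closure of $A$ can lie entirely outside $C^*$.

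This is precisely where the paper's proof takes a different, and necessary, route, and where dPS --- which your clause (2) uses only through the unusable lemma sketch --- earns its place among the hypotheses. The paper never upgrades $A\leq_1 C^*$ to $A\leq_1 M_1$: it dAP-amalgamates $B_1$ with the trace $W_n\cap C^*$ over $A$ (legitimate, since both are $\leq_1$-strong extensions of $A$, universal formulas descending from $C^*$), obtaining $B'$ with $W_n\cap C^*\leq_1 B'$, and then applies dPS of $K_1$ over the merely included, possibly non-strong, subset $W_n\cap C^*\subseteq W_n$ to get $D_1$ with $W_n\leq_1 D_1$ and $B'$ only contained in $D_1$; the $\mathcal{L}_2$-side is, much as in your sketch, copied from genuine points of $C^*$ so that the images of the new points satisfy the existential $\varphi$. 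The resulting copy of $B_1$ is strong merely in $C^*$ (via $D^*\cap C^*=\alpha(B')$), which is all clause (2) asks; your insistence on strongness in $M_1$ is both unnecessary and, by the example above, unachievable in general. Finally, your sketched proof of the lemma cannot be repaired by genericity: planting a copy of $\overline{A}=\mathrm{cl}_{M_1}(A)$ over $A$ inside $C^*$ would require a structure with a $\leq_*$-strong base $Y\supseteq A$ to which the non-strong extension $\overline{A}$ of $A$ is attached, and (e.g.\ in Shelah--Spencer) any such attachment destroys $Y\leq_1 D$, while dPS is inapplicable there since it presupposes $A\leq_1\overline{A}$.
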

\begin{proof}
We must verify that $C^*|_{\mathcal{L}_1}$ satisfies (1) and (2) from \defref{generic}. 

Because $M^*$ is the generic of $(K^*,\leq_*)$, we may write $M^* = \bigcup_{n\in \omega} W_n$ where $W_n\leq_* W_{n+1}$ and $W_n\in K^*$ for all $n\in \omega$. By assumption, and definition of $\leq_*$, $$C^* = \bigcup_{n\in \omega} (W_n\cap C^*)
\text{ with } W_n\cap C^* \leq_1 W_{n+1}\cap C^* \text{ for all } n\in \omega$$
By the assumption that all classes are closed under substructure, $W_n\cap C^*|_{\mathcal{L}_1} \in (K_1, \leq_1)$. Thus, $C^*|_{\mathcal{L}_1}$  satisfies (1).

\bigskip

It remains to show that $C^*|_{\mathcal{L}_1}$ satisfies (2). Suppose $A\leq_1 C^*$ and $A\leq_1 B_1$ where $B_1\in K_1$. We may take $B_1$ so that $B_1\cap M^* = A$. Write $B_1-A = \{b_1,\dots, b_k\}$. As $C^*$ infinite, we may find a set of $k$ elements $E:= \{e_1,\dots, e_k\} \subseteq C^*$. Note that $E\in K^*$. For each $i\leq k$, there exists a finite tuple $\overline{h}_i\subseteq M^*$ which witnesses that the existential $\varphi(e_i, \overline{m})$ holds in $M^*$. Let $$V := \{m: m\in \overline{m}\}\cup \{h: h\in \overline{h}_i \text{ for some } i\}$$
We may find some finite $W_n = \{w_1,\dots, w_\ell\}$ for which $A\cup V\subseteq W_n$ and $W_n\leq_* M^*$. Note that $A\leq_1 C^*\cap W_n$ by assumption. Using dAP in $K_1$, we can find an $\mathcal{L}_1$-structure $B'$ with universe $(W_n \cap C^*) \cup B_1$ that is an amalgamation over $A$ so that $A, B_1, W_n\cap C^* \leq_1 B'$. 
Using dPS in $K_1$, we can find an $\mathcal{L}_1$-structure $D_1$ with universe $W_n \cup B'$ that is a dPS amalgam over $W_n\cap C^*$ such that $B'\subseteq D_1$ and $W_n \leq_1 D_1$. Write $D_1 = \{d_1^1,\dots, d_k^1\}\cup \{w_1,\dots, w_\ell\}$ where $d_i^1 = b_i$ for $i\leq k$ and $w_i = w_i\in W_n$. 

Note that $E\cup W_n\subseteq M^*$, so $E \cup W_n\in K^*$, and, moreover, $|E\cup W_n | = |D_1|$. Now, in $\mathcal{L}_2$, define a $\mathcal{L}_2$-structure $D_2$ enumerated $\{d^2_1,\dots, d^2_k\}\cup \{w_i,\dots, w_\ell\}$ so that $W_n\subseteq D_2$ and the map $f: D_2 \rightarrow E\cup W_n$ is such that $f(d^2_i) = e_i$ and $f(w_i) = w_i$ is an $\mathcal{L}_2$-isomorphism. As $W_n\leq_* M^*$, we know that $W_n \leq_2 D_2$. 

Define the $\mathcal{L}^*$-structure $D =\{d_1,\dots, d_k\}\cup\{w_1,\dots, w_\ell\}$ where $w_i\in W_n$ so that $W_n \subseteq D$ (in the $\mathcal{L}^*$ sense); the map $h: D\rightarrow D_2$ defined $h(d_i) = d^2_i$, $h(w_i) = w_i$ is an $\mathcal{L}_2$-isomorphism; and the map $p: D\rightarrow D_1$ defined $p(d_i) = d^1_i$, $p(w_i) = w_i$ is a $\mathcal{L}_1$-isomorphism. Then, notice that $W_n\leq_* D$. 
Now, because $W_n\leq_* M^*$ and $M^*$ is the $K^*$ generic, then there is a $\mathcal{L}^*$-embedding $\alpha: D\rightarrow M^*$ so that $\alpha|_{W_n}$ is the identity map on $W_n$ and $D^*:= \alpha(D) \leq_* M^*$. Let $B^* := \alpha(\{d_1,\dots, d_k\})\cup A$. There is a natural $\mathcal{L}_1$-isomorphism $\rho: B_1 \rightarrow B^*$ by $\rho(b_i) = d_i$, fixing $A$. 

Notice that $B^*\leq_1 \alpha(B')$ (where we are looking at the copy of the $\mathcal{L}_1$-structure $B'$ in $D$). We can write $B' = (W_n\cap C^*)\cup B$, and notice that in $D$, $D\models \varphi(d_i,\overline{m})$ for each $i\leq k$ since $\varphi$ is a $\mathcal{L}_2$ formula and the construction of $D$ preserved and copied the $\mathcal{L}_2$ the relations between each $e_i$, $\overline{m}$ and $h_i$ for $i\leq k$.
Thus, $\alpha(B')\subseteq C^*$, as $W_n\cap C^*$ is fixed by $\alpha$ and as $\varphi(x,\overline{m})$ is existential, $M^* \models \varphi(\alpha(d_i),\overline{m})$, thus ensuring $\alpha(d_i) \in C^*$. Moreover, $D^*\cap C^* = \alpha(W_n\cup B')\cap C^* = (W_n\cap C^*)\cup \alpha(B') = \alpha(B')$. Therefore, $$\alpha(\rho(B_1)) \leq_1 \alpha(B') = D^*\cap C^* \leq_1 C^*$$
Finally, recall that $A\subseteq W_n$, so $\alpha \circ \rho$ is an embedding over $A$ as required. 
This completes the proof of (2). 
\end{proof}
By essentially the same argument, we get
\begin{theorem}
     \label{infinitemerge}
Suppose $\mathcal{S}=\{(K_i,\leq_i): i\in I\}$ is a set of smooth classes  with uniform dAP such that each class $(K_i,\leq_i)$ is closed under substructure and has dPS.  Suppose $M^*$ is the generic of the merge $\circledast_{i\in I} (K_i,\leq_i)$ in the language $\mathcal{L}^* = \{\mathcal{L}_i:i\in I\}$. For every $J\subseteq I$, if $C^* = \varphi(M^*,\overline{m})$ is infinite with $\overline{m}\subseteq M^*$ and $\varphi$ an existential formula in the language $\bigcup_{i\in I-J} \mathcal{L}_i$, then $C^*\cong M_i$, the generic of $(K_i,\leq_i)$,  for $i\in J$.  
\end{theorem}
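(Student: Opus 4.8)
The plan is to run the proof of \thmref{mainthm} once for each index $i \in J$, the only structural change being that in place of a single ``other'' language $\mathcal{L}_2$ we must treat every $\mathcal{L}_\ell$ with $\ell \neq i$ at the same time. Fix $i \in J$; I will check that $C^*|_{\mathcal{L}_i}$ satisfies conditions (1) and (2) of \defref{generic} for $(K_i,\leq_i)$, so that $C^*|_{\mathcal{L}_i} \cong M_i$ by \thmref{smoothgeneric}. Write $M^* = \bigcup_{n} W_n$ with $W_n \in K^*$ and $W_n \leq_* W_{n+1}$ (possible since $M^*$ is the generic of $K^*$, which exists by \thmref{mergeexistencethm}). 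Since $\varphi$ is a formula in $\bigcup_{\ell \in I - J}\mathcal{L}_\ell$ it mentions no $\mathcal{L}_\ell$ with $\ell \in J$, so $C^* = \bigcup_n (W_n \cap C^*)$; because $\leq_*$ restricts to $\leq_i$, each $K_\ell$ is closed under substructure, and (as in \thmref{mainthm}) $K_i$ has smooth intersections, we get $(W_n \cap C^*)|_{\mathcal{L}_i} \in K_i$ and $W_n \cap C^* \leq_i W_{n+1} \cap C^*$. This is condition (1).

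For condition (2): suppose $A \leq_i C^*$ and $A \leq_i B_1$ with $B_1 \in K_i$, chosen so $B_1 \cap M^* = A$, and let $k = |B_1 - A|$. Using that $C^*$ is infinite, I would pick $k$ fresh elements $E \subseteq C^*$, record finite tuples $\overline{h}_j \subseteq M^*$ witnessing each of the existential formulas $\varphi(e_j, \overline{m})$, and choose a finite $W_n \leq_* M^*$ that contains $A$, $\overline{m}$ and every $\overline{h}_j$ and is disjoint from $E$; note $E \cup W_n \in K^*$ and $W_n \leq_\ell E \cup W_n$ for all $\ell$. Exactly as in \thmref{mainthm}, I would build the $\mathcal{L}_i$-reduct $D_1$ of the extension structure by first forming a disjoint amalgam $B'$ of $B_1$ and $W_n \cap C^*$ over $A$ (dAP in $K_i$, with $A, B_1, W_n \cap C^* \leq_i B'$) and then, by dPS in $K_i$ over $W_n \cap C^*$, placing $B'$ inside a $D_1$ on universe $W_n \cup B'$ with $W_n \leq_i D_1$. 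The new point is that all the other coordinates are handled uniformly: for each $\ell \neq i$, equip the universe $W_n \cup \{d_1,\dots,d_k\}$ (where $d_j$ names the copy of $b_j$) with the isomorphic image of $(E \cup W_n)|_{\mathcal{L}_\ell}$ under $d_j \mapsto e_j$, fixing $W_n$. Every coordinate now lives on a universe of the same cardinality $|W_n| + k = |D_1|$, so they glue into a single $\mathcal{L}^*$-structure $D$ with $D|_{\mathcal{L}_i} = D_1$, $D \in K^*$ and $W_n \leq_* D$. By genericity of $M^*$, take an $\mathcal{L}^*$-embedding $\alpha : D \to M^*$ fixing $W_n$ with $\alpha(D) \leq_* M^*$. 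Since $\varphi$ lives in $\bigcup_{\ell \in I - J}\mathcal{L}_\ell$ and $D$'s reduct to each such language was copied from $M^*$ along with the witnesses $\overline{h}_j \subseteq W_n$, we get $D \models \varphi(d_j, \overline{m})$, hence $M^* \models \varphi(\alpha(d_j), \overline{m})$ as $\varphi$ is existential and $\alpha$ fixes $\overline{m}$; thus $\alpha(d_1),\dots,\alpha(d_k) \in C^*$. Then $B^* := \{\alpha(d_1),\dots,\alpha(d_k)\} \cup A$ receives an $\mathcal{L}_i$-embedding from $B_1$ fixing $A$ (via $b_j \mapsto \alpha(d_j)$), and the same isomorphism-invariance and smooth-intersections computation as in \thmref{mainthm} --- using $\alpha(D) \cap C^* = \alpha(B')$ and $\alpha(D) \leq_i M^*$ --- gives $B^* \leq_i \alpha(B') \leq_i C^*$. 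Since $A \subseteq W_n$ this embedding fixes $A$, proving (2).

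The genuinely new part over \thmref{mainthm} is the bookkeeping of gluing the finitely-specified $\mathcal{L}_i$-piece $D_1$ to the copies of $(E \cup W_n)|_{\mathcal{L}_\ell}$ for the remaining --- possibly infinitely many --- $\ell$ into a single $\mathcal{L}^*$-structure; what makes this routine is that every coordinate is built on the same underlying finite set of size $|W_n| + k$, so no cardinality conflict can occur and the ``uniform'' part of uniform dAP is needed only through \thmref{mergeexistencethm}, to guarantee that $M^*$ exists in the first place. The step I expect to demand the most care is the verification that $\alpha(D) \cap C^* = \alpha(B')$ --- i.e.\ that the $\varphi$-witnesses really do force the points $\alpha(d_j)$ into $C^*$ while nothing else in $\alpha(D)$ enters $C^*$ --- together with the observation that, exactly as in \thmref{mainthm}, one must have smooth intersections available for each $(K_i,\leq_i)$ with $i \in J$ and extend it to the infinite structures $M^*$ and $C^*$.
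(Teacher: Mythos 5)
Your proposal is correct and takes essentially the same route as the paper, which proves this theorem simply by noting it follows "by essentially the same argument" as \thmref{mainthm}: you rerun that argument for each $i\in J$, building the $\mathcal{L}_i$-side by dAP and dPS and handling all remaining languages uniformly by copying the structure of $E\cup W_n$ onto the new points before invoking genericity. Your closing observation that smooth intersections must be available for each $(K_i,\leq_i)$ with $i\in J$ is apt, since the paper's statement omits that hypothesis even though its proof of \thmref{mainthm} uses it.
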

\thmref{mainthm} has the following direct consequences.

\bigskip
Suppose $(K_1,\leq_1)$ and $(K_2,\leq_2)$ are smooth classes closed under substructure with dAP, and that $(K_1,\leq_1)$ has smooth intersections and dPS. Let $M_1$ and $M_2$ denote their respective generics, and $M^*$ to be the generic of $K_1 \circledast K_2$. 
\begin{corollary}
$Aut(M^*) \cong Aut(M_1)\cap Aut(M_2)$ as subgroups of $Sym(M^*)$.  
\end{corollary}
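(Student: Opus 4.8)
The plan is to reduce the statement to the single structural fact that the $\mathcal{L}_i$-reduct of $M^*$ is (a copy of) the generic $M_i$, after which the corollary becomes a routine observation about automorphism groups of relational reducts; no use of smooth intersections or of the full strength of \thmref{mainthm} is needed.

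\textbf{Step 1 (identifying the reducts).} Since $(K_1,\leq_1)$ and $(K_2,\leq_2)$ are closed under substructure and have dAP, the merge $K_1\circledast K_2$ is a strong extension of each $K_i$ in the sense of \cite{Evans_2019}, so $M^*|_{\mathcal{L}_i}\cong M_i$ for $i=1,2$ (this is exactly the statement recalled just before \thmref{mainthm}; the same conclusion is given by Proposition~\ref{firstmerge} when both classes have dPS, but here we only know $K_1$ does, so we invoke the \cite{Evans_2019} version). Fix isomorphisms $\theta_i:M^*|_{\mathcal{L}_i}\to M_i$ and use each $\theta_i$ to identify $M_i$ with the underlying set of $M^*$ carrying its $\mathcal{L}_i$-reduct structure; under this identification $Aut(M_i)=Aut(M^*|_{\mathcal{L}_i})$ as subgroups of $Sym(M^*)$. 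This is the content of the phrase ``as subgroups of $Sym(M^*)$'' in the statement: the asserted isomorphism is the composite of the conjugation isomorphisms induced by $\theta_1$ and $\theta_2$.

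\textbf{Step 2 (reduct-product argument).} Recall $\mathcal{L}^*=\mathcal{L}_1\cup\mathcal{L}_2$ with $\mathcal{L}_1\cap\mathcal{L}_2=\emptyset$, and all three languages are relational, so every atomic $\mathcal{L}^*$-formula is either an equality or an atomic $\mathcal{L}_i$-formula for exactly one $i$. Hence a bijection $\sigma\in Sym(M^*)$ satisfies $\overline{a}\in R^{M^*}\Leftrightarrow \sigma\overline{a}\in R^{M^*}$ for every $R\in\mathcal{L}^*$ and every tuple $\overline{a}$ if and only if it does so for every $R\in\mathcal{L}_1$ and for every $R\in\mathcal{L}_2$; that is, $\sigma\in Aut(M^*)$ iff $\sigma\in Aut(M^*|_{\mathcal{L}_1})$ and $\sigma\in Aut(M^*|_{\mathcal{L}_2})$. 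Combined with Step 1 this gives
\[
Aut(M^*) \;=\; Aut(M^*|_{\mathcal{L}_1})\cap Aut(M^*|_{\mathcal{L}_2}) \;=\; Aut(M_1)\cap Aut(M_2)
\]
as subgroups of $Sym(M^*)$, which is the claim.

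\textbf{On the main obstacle.} There is essentially no hard step here; the only points requiring care are bookkeeping ones. First, one must invoke the reduct identification from \cite{Evans_2019} rather than Proposition~\ref{firstmerge}, since the latter assumes dPS on both sides whereas the corollary's hypotheses only give it for $K_1$. Second, one should make the identifications of Step 1 explicit enough that the claimed isomorphism of groups is literally an equality of subgroups of $Sym(M^*)$; the equality $Aut(M^*)=Aut(M^*|_{\mathcal{L}_1})\cap Aut(M^*|_{\mathcal{L}_2})$ itself is identification-free, and one may note that changing the choices of $\theta_i$ only conjugates the subgroups $Aut(M_i)$ within $Sym(M^*)$ and so does not affect the statement.
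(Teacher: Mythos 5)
Your argument is correct and is exactly the reasoning the paper intends: the corollary is stated there without proof as a direct consequence of the reduct identification $M^*|_{\mathcal{L}_i}\cong M_i$, and your Step 2 (disjoint relational languages give $Aut(M^*)=Aut(M^*|_{\mathcal{L}_1})\cap Aut(M^*|_{\mathcal{L}_2})$) is the obvious observation being invoked. Your remark that only the reduct statement from \cite{Evans_2019} (rather than the full strength of the main theorem or of Proposition~\ref{firstmerge}) is needed under the stated hypotheses is a fair and accurate point of bookkeeping.
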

\begin{corollary}
 If $a\in M_1$, then $M_1 \cong M_1 - \{a\}$.
\end{corollary}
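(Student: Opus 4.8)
The plan is to deduce this from \thmref{mainthm} by merging $(K_1,\leq_1)$ with the class of finite linear orders. Let $\mathcal{L}_2=\{<\}$ and let $(K_2,\leq_2)$ be the class of all finite linear orders with $\leq_2$ equal to $\subseteq$. This is a Fra\"iss\'e class; it is closed under substructure, has dAP, and has dPS: given linear orders $A\subseteq B$ and $A\subseteq C$, order $B\cup_A C$ by retaining the orders of $B$ and of $C$ and, inside each of the intervals cut out by consecutive elements of $A$, placing all points of $B\setminus A$ before all points of $C\setminus A$; this is a linear order restricting correctly to $B$ and to $C$ and meeting only in $A$. We assume $M_1$ is infinite (the only case in which the statement can hold); then $K_1$ contains structures of every finite cardinality, and by the argument of the proof of \thmref{mergeexistencethm} the merge $(K^*,\leq_*)=K_1\circledast K_2$ has AP — amalgamate the $\mathcal{L}_1$-reducts in $K_1$ via dAP and extend the $\mathcal{L}_2$-reduct to a linear order of matching finite cardinality — and it has countably many isomorphism types, so by \thmref{smoothgeneric} it has a generic $M^*$.

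Next I would apply Proposition~\ref{firstmerge}: since $K_1$ and $K_2$ are each closed under substructure and each have dAP and dPS, $M^*|_{\mathcal{L}_1}\cong M_1$. Fix an isomorphism $\theta\colon M^*|_{\mathcal{L}_1}\to M_1$ and set $m_0:=\theta^{-1}(a)\in M^*$. Let $\varphi(x,y)$ be the quantifier-free (hence existential) $\mathcal{L}_2$-formula $(x<y)\vee(y<x)$. Since $M^*$ is a union of a $\leq_*$-chain of finite structures whose $\mathcal{L}_2$-reducts are finite linear orders, $M^*|_{\mathcal{L}_2}$ is a linear order, so $C^*:=\varphi(M^*,m_0)=M^*\setminus\{m_0\}$, which is infinite. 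By \thmref{mainthm} applied to $\varphi$ with $\overline m=m_0$, we get $C^*|_{\mathcal{L}_1}\cong M_1$. On the other hand $\theta$ restricts to an isomorphism of $\mathcal{L}_1$-structures $C^*|_{\mathcal{L}_1}=(M^*\setminus\{m_0\})|_{\mathcal{L}_1}\to M_1\setminus\{a\}$. Composing the two isomorphisms gives $M_1\setminus\{a\}\cong C^*|_{\mathcal{L}_1}\cong M_1$, which is the claim.

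The argument is short once one hits on the right auxiliary data: the class of finite linear orders meets exactly the hypotheses needed to feed into \thmref{mainthm} and Proposition~\ref{firstmerge}, and "a point together with everything strictly below and strictly above it" is a quantifier-free-definable co-singleton, which is what allows a single element of the reduct to be deleted. Passing through $\theta^{-1}(a)$ makes the deleted point correspond to the prescribed $a$, so no separate homogeneity statement about $M_1$ is needed. I expect the only part requiring genuine care to be verifying that $K_1\circledast K_2$ really has a generic — which is why one first reduces to $M_1$ infinite, so that the cardinality spectra of the two classes agree — together with the verification of dPS for linear orders; both are routine, and after them the conclusion is an immediate instance of \thmref{mainthm}.
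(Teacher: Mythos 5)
Your proposal is correct and follows essentially the same route as the paper: merge $(K_1,\leq_1)$ with a Fra\"iss\'e class, invoke the generic-preservation result to identify $M^*|_{\mathcal{L}_1}$ with $M_1$, and apply the main theorem to a quantifier-free $\mathcal{L}_2$-formula defining the complement of a point. The paper's proof is just a shorter instance of this idea, taking an arbitrary Fra\"iss\'e class $(K_2,\leq_2)$ and the formula $x\neq a$ in place of your linear orders and $(x<y)\vee(y<x)$, so your extra verifications (dAP/dPS for linear orders, existence of the merged generic) are fine but not needed beyond what the standing hypotheses already give.
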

\begin{proof}
    Using \thmref{mainthm} and merging $(K_1,\leq_1)$ with any Fraïssé class $(K_2,\leq_2)$, for any $a\in M^*$, the set $C^* = \{b\in M^*: b\neq a\} $ is indeed isomorphic to $M_1$. 
\end{proof}
The next corollary holds by taking $(K_2,\leq_2)$ to be the class of finite structures with a binary equivalence relation and applying the main theorem:
\begin{corollary}
    There is an expansion of $M_1$ by a binary equivalence relation $E$ such that for any $n\in \omega$, and $\{P_i\}_{i\leq n}$ a set of $n$ equivalence classes of $M_1$, $\bigcup_{i\leq n} P_i \cong M_1$ 
\end{corollary}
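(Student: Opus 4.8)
The plan is to deduce this from \thmref{mainthm} by merging $(K_1,\leq_1)$ with the class $(K_2,\leq_2)$ of all finite structures in the one-relation language $\mathcal{L}_2=\{E\}$ in which $E$ is interpreted as an equivalence relation, taken with $\leq_2\,=\,\subseteq$. First I would verify that $(K_2,\leq_2)$ meets the hypotheses imposed on the second class in \thmref{mainthm}: it is clearly closed under substructure, and it has dAP because, after arranging $B\cap C=A$, the structure on $B\cup C$ whose $E$-relation is the transitive closure of $E_B\cup E_C$ is again a finite equivalence relation whose restriction to $B$ (resp.\ $C$) is $E_B$ (resp.\ $E_C$); the same disjoint amalgam also witnesses dPS. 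Since $M_1$ is infinite, $C_{(K_1,\leq_1)}$ is downward closed and unbounded in $\omega$, hence equal to $\omega=C_{(K_2,\leq_2)}$, so $\{(K_1,\leq_1),(K_2,\leq_2)\}$ has uniform dAP and, by \thmref{mergeexistencethm}, the merge $K^*=K_1\circledast K_2$ has a generic $M^*$.

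Next I would record the two reducts of $M^*$ by way of Proposition~\ref{firstmerge}, which applies since both classes are closed under substructure and have dAP and dPS: we obtain $M^*|_{\mathcal{L}_1}\cong M_1$ — this is the promised expansion of $M_1$ by an equivalence relation, and I now identify $M^*$ with $(M_1,E)$ — and also $M^*|_{\mathcal{L}_2}\cong M_2$, the Fraïssé limit of finite equivalence relations, in which every class is infinite; hence every $E$-class of $M^*$ is infinite.

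Now, given equivalence classes $P_1,\dots,P_n$ of $M^*$, I would pick representatives $m_i\in P_i$, set $\overline m=(m_1,\dots,m_n)$, and consider the quantifier-free (hence existential) $\mathcal{L}_2$-formula
\[
\varphi(x;y_1,\dots,y_n):=\bigvee_{i=1}^{n}E(x,y_i).
\]
Then $\varphi(M^*,\overline m)=\bigcup_{i=1}^{n}P_i$, which is infinite since each $P_i$ is. Applying \thmref{mainthm} (here using that $(K_1,\leq_1)$ has smooth intersections and dPS) yields that $\bigl(\bigcup_{i=1}^{n}P_i\bigr)|_{\mathcal{L}_1}$ is isomorphic to the generic of $(K_1,\leq_1)$; since $M^*|_{\mathcal{L}_1}\cong M_1$, this is precisely the assertion that the union of the $E$-classes $P_1,\dots,P_n$, viewed as an $\mathcal{L}_1$-substructure of $M_1$, is isomorphic to $M_1$.

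I do not anticipate a genuine obstacle here: \thmref{mainthm} carries the whole argument, and what remains is the routine verification that the auxiliary class of finite equivalence relations fits its template (closure under substructure, dAP, dPS, matching cardinality spectrum) together with the elementary observation that a finite union of $E$-classes is literally the solution set of a quantifier-free $\mathcal{L}_2$-formula with parameters. The one computation worth double-checking is that the disjoint amalgam of equivalence relations creates no unwanted identifications; and if one prefers not to invoke Proposition~\ref{firstmerge} for the $\mathcal{L}_2$-reduct, the infinitude of $\varphi(M^*,\overline m)$ can instead be read off directly from the genericity of $M^*$.
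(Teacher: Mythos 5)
Your proposal is correct and takes essentially the same route as the paper, which obtains this corollary precisely by taking $(K_2,\leq_2)$ to be the Fraïssé class of finite equivalence relations and applying \thmref{mainthm}. Your write-up simply supplies the routine verifications the paper leaves implicit (closure under substructure, dAP/dPS via transitive-closure amalgams, uniform dAP, and the quantifier-free $\mathcal{L}_2$-formula $\bigvee_i E(x,y_i)$ defining the union of classes), all of which are sound.
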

The class $(K_1,\leq_1)$ can be taken to be a class of Shelah-Spencer graphs $(K_\alpha, \leq_\alpha)$ with generic $M_\alpha$, and the class $(K_2,\leq)$ can be taken to be the Fraïssé class of all finite linear orders, giving:  
\begin{corollary}
There exists a family of sets $\{W_i\}_{i\in \omega}$ of subsets of $M_\alpha$ such that any finite, non-empty intersection of sets in $\{W_i\}_{i\in\omega}$ is isomorphic to $M_\alpha$.  
\end{corollary}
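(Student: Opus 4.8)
The plan is to invoke \thmref{mainthm} with $(K_1,\leq_1)$ the Shelah--Spencer class $(K_\alpha,\leq_\alpha)$ of Example~\ref{hrushovskigraph} and $(K_2,\leq_2)$ the Fraïssé class of all finite linear orders in the one-relation language $\mathcal{L}_2=\{<\}$, and then to read the family $\{W_i\}_{i\in\omega}$ off the linear order carried by the generic of the merge.

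First I would verify the hypotheses of \thmref{mainthm}. The class $(K_\alpha,\leq_\alpha)$ is closed under substructure, since any subset of a structure all of whose subsets have nonnegative $\delta_\alpha$ again has that property; and by \cite{BALDWIN19961} it has dAP, dPS, and smooth intersections. The Fraïssé class of finite linear orders is closed under substructure, and since its smooth relation is $\subseteq$, both dAP and dPS for it reduce to the strong amalgamation of finite linear orders, obtained by interleaving. Both classes realize every finite cardinality, so by the remark preceding \thmref{mergeexistencethm} the pair has uniform dAP, and \thmref{mergeexistencethm} yields a generic $M^*$ of the merge $K^*=K_\alpha\circledast K_2$. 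Its hypotheses being met, Proposition~\ref{firstmerge} then gives $M^*|_{\mathcal{L}_\alpha}\cong M_\alpha$, while $M^*|_{\{<\}}$ is the generic of the class of finite linear orders, i.e.\ a countable dense linear order without endpoints; in particular $M^*$ and $M_\alpha$ share the same countable universe.

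Next I would define the family: enumerate the universe as $M^*=\{a_i:i\in\omega\}$ and set $W_i:=\{x\in M^*:x<a_i\}$, viewed as a subset of $M_\alpha$. For a finite index set $\{i_1,\dots,i_k\}$ the intersection $\bigcap_{j=1}^{k}W_{i_j}=\{x\in M^*:x<a_{i_j}\text{ for every }j\}$ equals $\varphi(M^*,a_{i_1},\dots,a_{i_k})$, where $\varphi(x;y_1,\dots,y_k)=\bigwedge_{j=1}^{k}(x<y_j)$ is quantifier-free, hence existential, in $\mathcal{L}_2$. Because $M^*|_{\{<\}}$ is dense without endpoints, every element has infinitely many predecessors, so this intersection is infinite (and in particular always non-empty). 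Applying \thmref{mainthm} with $C^*=\varphi(M^*,\overline{m})$, $\overline{m}=(a_{i_1},\dots,a_{i_k})$, yields $\bigl(\bigcap_{j=1}^{k}W_{i_j}\bigr)|_{\mathcal{L}_\alpha}\cong M_\alpha$, which is exactly the claim. Distinct $a_i$ give distinct $W_i$ in a dense linear order, so this is genuinely an $\omega$-indexed family.

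The real content is confined to checking the hypotheses of \thmref{mainthm} for this particular pair of classes; everything else is bookkeeping, and I do not expect a serious obstacle. The one point that needs care is that \thmref{mainthm} applies only to \emph{infinite} parameter-definable sets, which is precisely why lower sets $\{x:x<a\}$ are the convenient choice here: in a dense linear order without endpoints every such set, and every finite intersection of such sets, is automatically infinite, so no separate non-emptiness or size argument is needed.
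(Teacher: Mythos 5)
Your proposal is correct and follows the paper's intended route: merge $(K_\alpha,\leq_\alpha)$ with the Fraïssé class of finite linear orders and apply \thmref{mainthm} to $\mathcal{L}_2$-definable infinite sets, which is exactly how the paper derives this corollary (stated there without further proof). Your choice of lower rays $\{x:x<a_i\}$ rather than, say, intervals is a harmless variant within the same argument, and your verification of the hypotheses (closure under substructure, dAP, dPS, smooth intersections, uniform dAP) matches the facts the paper cites.
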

Take $(K_1, \leq_1)$ to be a class of Shelah-Spencer graphs $(K_\alpha, \leq_\alpha)$ in the language $\mathcal{L}_\alpha := \{E_\alpha\}$, where $E_\alpha$ is binary, and $(K_2,\leq_2)$ to be another class of Shelah-Spencer graphs $(K_\beta, \leq_\beta)$ in the language $\mathcal{L}_\beta := \{E_\beta\}$ where $E_\beta$ is a binary relation. Let $E_\beta\in \mathcal{L}_\beta$. Applying \thmref{mainthm}, the following holds:
\begin{corollary}
 For $a\in M^*$ and $C^*:=\{b\in M^*: \exists x(  E_\beta(a,x) \land E_\beta(x,b) \land \neg E_\beta(a,b))\}$,  $C^*|_{\mathcal{L}_\alpha}\cong M_\alpha$.  
\end{corollary}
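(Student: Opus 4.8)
The plan is to read this corollary off directly from \thmref{mainthm}, applied with $(K_1,\leq_1)=(K_\alpha,\leq_\alpha)$, $(K_2,\leq_2)=(K_\beta,\leq_\beta)$, parameter tuple $\overline m = a$, and the formula $\varphi(b,a):=\exists x\,(E_\beta(a,x)\land E_\beta(x,b)\land\neg E_\beta(a,b))$, which is existential in $\mathcal{L}_2=\mathcal{L}_\beta$. So there are really only two things to do: (i) check the structural hypotheses of \thmref{mainthm}, and (ii) verify that $C^* = \varphi(M^*,a)$ is infinite.

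For (i): both $(K_\alpha,\leq_\alpha)$ and $(K_\beta,\leq_\beta)$ are closed under substructure, since the defining inequality of a Shelah--Spencer class in Example~\ref{hrushovskigraph} ($\delta(A')\ge 0$ for all $A'\subseteq A$) is hereditary; both have dAP, because Shelah--Spencer classes have fAP (recorded after Example~\ref{hrushovskigraph}) and the free amalgam $C*_A B$ is in particular a disjoint amalgam with $A,B,C\leq C*_A B$; and $(K_\alpha,\leq_\alpha)$ has smooth intersections and dPS, again as recorded after Example~\ref{hrushovskigraph} (from \cite{BALDWIN19961}). Hence all hypotheses of \thmref{mainthm}, and of Proposition~\ref{firstmerge}, are satisfied.

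For (ii): since $\varphi$ is an $\mathcal{L}_\beta$-formula, $C^*$ is computed inside $M^*|_{\mathcal{L}_\beta}$, and $M^*|_{\mathcal{L}_\beta}\cong M_\beta$ by Proposition~\ref{firstmerge}; so it suffices to show $\varphi(M_\beta,a)$ is infinite for an arbitrary $a\in M_\beta$. Fix $n$, write $M_\beta=\bigcup_k W_k$ with $W_k\leq_\beta W_{k+1}$ and $W_k\in K_\beta$, and choose $W:=W_k$ with $a\in W$, so $W\leq_\beta M_\beta$. Build $B\supseteq W$ by adjoining a single new vertex $x$ with the one new edge $E_\beta(a,x)$, and then $n$ further new vertices $b_1,\dots,b_n$, each joined to $x$ and to nothing else. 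A routine computation with $\delta_\beta$ — any $B'$ with $W\subseteq B'\subseteq B$ adds to $W$ some isolated $b_i$'s (each contributing $+1$) and possibly $x$ together with $k'$ of the $b_i$'s and the edge to $a$ (contributing $(k'+1)(1-\beta)>0$, using $\beta<1$), and the analogous bound after intersecting an arbitrary $B''\subseteq B$ with $W$ — shows both $W\leq_\beta B$ and $B\in K_\beta$. By \defref{generic}(2) there is an embedding $f:B\to M_\beta$ with $f|_W=\mathrm{Id}_W$ and $f(B)\leq_\beta M_\beta$; as $f(B)$ carries the $\mathcal{L}_\beta$-structure induced from $M_\beta$, the edges $E_\beta(a,f(x))$, $E_\beta(f(x),f(b_i))$ and the non-edges $\neg E_\beta(a,f(b_i))$ hold in $M_\beta$, so $f(b_1),\dots,f(b_n)$ are $n$ distinct elements of $\varphi(M_\beta,a)$. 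As $n$ was arbitrary, $C^*$ is infinite, and \thmref{mainthm} then yields $C^*|_{\mathcal{L}_\alpha}=C^*|_{\mathcal{L}_1}\cong M_\alpha$, as claimed.

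The main obstacle is step (ii): one must exhibit, inside a strong substructure of $M_\beta$, witnessing configurations of unbounded size, and confirm through the dimension function $\delta_\beta$ that the adjunction is simultaneously a strong extension and stays in $K_\beta$; everything else is just matching notation to \thmref{mainthm} and Proposition~\ref{firstmerge}. A small point worth not skipping is that the relevant reducts and restrictions here are \emph{induced} substructures, which is exactly what makes the conclusion ``$f(B)\leq_\beta M_\beta$'' control the edge relations among $a$, $f(x)$ and the $f(b_i)$.
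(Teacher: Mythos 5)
Your proposal is correct and matches the paper's (implicit) argument: the paper derives this corollary simply by applying \thmref{mainthm} to $K_\alpha\circledast K_\beta$ with the given existential $\mathcal{L}_\beta$-formula, exactly as you do. Your extra work — checking closure under substructure, dAP, dPS, smooth intersections for the Shelah--Spencer classes, and the $\delta_\beta$ computation showing $\varphi(M_\beta,a)$ is infinite via a strong extension and genericity — correctly supplies the details the paper leaves unstated.
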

All of these, of course, can be extended to results with other Fraïssé classes such as the class of finite graphs, the class of $K_n$ free graphs, etc. 

\bigskip
These results may be thought of as well-behaved expansions of generics. They also give an idea of the overall structure of a generic, reminiscent of the "self-referential" properties of some Fraïssé classes. For example, the rationals under the standard linear order, $(\mathbb{Q}, \leq)$, is the Fraïssé limit of the class of all finite linear orders, and it has the property that every open interval in $(\mathbb{Q}, \leq)$ is indeed isomorphic to $(\mathbb{Q}, \leq)$.  Here, we get that for a sufficiently nice class $(K_1,\leq_1)$, its generic $M_1$ has the property that it can be linearly ordered so that every interval is isomorphic to $M_1$. 

Given \thmref{mainthm}, we see that in many cases there is a relationship between the generic of a merge and the generics of the original smooth class. The question is then what other properties of the generics of the original classes transfer to the generic of the merge. The remainder of the paper will discuss this transfer of properties. 

\subsubsection{Atomic Generics and Merges}\label{atomic}
In this section, we will study smooth classes with atomic generics and their merged generics. It turns out that the property of the generic of a smooth class being atomic is dependent on the universal formula definition of the relation of the smooth class. Because of this dependency, the property of the generic being atomic transfers very well to the generic of a merge. 

We begin by proving the converse of Proposition 3.4 in \cite{KL}, characterizing atomic generics of smooth classes. We will include both directions of the proof for convenience, and because we will need both directions.

Recall: By definition of smoothness, for every $A\in K$ and enumeration $\overline{a}$ of $A$, there is a set of universal formulas $\Phi_{\overline{a}}$ such that 
$$A\leq C \Leftrightarrow C\models \phi(\overline{a}) \text{ for all }\phi\in \Phi_{\overline{a}}$$
\begin{proposition}
\label{atomicchar}
    Let $(K, \leq)$ be a smooth class with a generic $M$. Then, $M$ is atomic if and only if for every $A\in K$ and every enumeration $\overline{a}$ of $A$,  $\Phi_{\overline{a}}$ is finitely generated modulo $Th(M)$. I.e., there exists a finite subset $\Phi^0_{\overline{a}} \subseteq \Phi_{\overline{a}}$ such that for $\psi_{\overline{a}}(\overline{x}):= \bigwedge_{\phi\in \Phi^0_{\overline{a}}} \phi(\overline{x})$,  $M\models \forall \overline{x} (\psi_{\overline{a}}(\overline{x}) \rightarrow \Phi_{\overline{a}}(\overline{x}))$. 
\end{proposition}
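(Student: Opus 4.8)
The plan is to prove both directions by relating atomicity of $M$ to the behavior of the sets $\Phi_{\overline{a}}$ under $Th(M)$. Recall that $M$ is atomic iff every type realized in $M$ is isolated; equivalently, for every tuple $\overline{a}$ from $M$, the type $\mathrm{tp}^M(\overline{a})$ is isolated by a single formula. The key observation linking this to smoothness is that, since $M$ is a generic, every finite tuple $\overline{a}$ of $M$ is contained in some $A \le M$ with $A \in K$, and one can show (using genericity, condition (2) of Definition \ref{def:generic}) that for $A \le M$ with enumeration $\overline{a}$, the $\mathcal{L}$-type of $\overline{a}$ in $M$ is determined by the quantifier-free diagram $\Delta_{\overline{a}}$ together with the extension property expressed by $\Phi_{\overline{a}}(\overline{x})$: indeed two tuples $\overline{a}, \overline{a}'$ enumerating strong substructures $A, A' \le M$ with $A \cong A'$ via the induced map have the same type, because the back-and-forth of the generic can be run. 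So heuristically $\mathrm{tp}^M(\overline{a})$ is "axiomatized" over $Th(M)$ by $\Delta_{\overline{a}} \cup \Phi_{\overline{a}}(\overline{x})$, and since $\Delta_{\overline{a}}$ is finite, $\mathrm{tp}^M(\overline{a})$ is isolated iff $\Phi_{\overline{a}}$ is finitely generated modulo $Th(M)$.

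For the ($\Leftarrow$) direction (the one already in \cite{KL}), suppose every $\Phi_{\overline{a}}$ is finitely generated mod $Th(M)$ by $\psi_{\overline{a}}$. Given any tuple $\overline{b}$ from $M$, extend it to an enumeration $\overline{a}$ of some $A \le M$ with $A \in K$ (possible since $M = \bigcup A_i$ with $A_i \le A_{i+1} \le M$, so any finite set sits inside some $A_i \le M$). Then I claim $\Delta_{\overline{a}}(\overline{x}) \wedge \psi_{\overline{a}}(\overline{x})$ isolates $\mathrm{tp}^M(\overline{a})$: any realization $\overline{a}'$ in $M$ of this formula enumerates a substructure $A' \cong A$ (by $\Delta_{\overline{a}}$ and condition (3) of the smooth class definition, $\Phi_{\overline{a}} = \Phi_{\overline{a}'}$) with $A' \le M$ (since $\psi_{\overline{a}}$ generates $\Phi_{\overline{a}}$ mod $Th(M)$ and $M \models Th(M)$), so by the homogeneity of the generic there is an automorphism of $M$ carrying $\overline{a}$ to $\overline{a}'$. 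Restricting the isolating formula to the subtuple $\overline{b}$ gives an isolating formula for $\mathrm{tp}^M(\overline{b})$; hence $M$ is atomic.

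For the new direction ($\Rightarrow$), suppose $M$ is atomic but some $\Phi_{\overline{a}}$ is not finitely generated mod $Th(M)$, for some $A \in K$ with enumeration $\overline{a}$. The issue is that $A$ need not satisfy $A \le M$ a priori — but since $M$ is generic and $\emptyset \le A$, $\emptyset \le M$, condition (2) of Definition \ref{def:generic} gives an embedding of $A$ into $M$ as a strong substructure, so WLOG $A \le M$ and $\overline{a}$ is a tuple of $M$. By atomicity, $\mathrm{tp}^M(\overline{a})$ is isolated by some formula $\chi(\overline{x})$. Now $\chi$, together with $Th(M)$, implies every formula in $\mathrm{tp}^M(\overline{a})$, and in particular every $\phi \in \Phi_{\overline{a}}$ (each such $\phi(\overline{a})$ holds in $M$ since $A \le M$). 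One then argues, via a compactness argument, that $\chi$ modulo $Th(M)$ must already imply a finite subconjunction generating $\Phi_{\overline{a}}$: if not, then for every finite $\Phi^0 \subseteq \Phi_{\overline{a}}$ the theory $Th(M) \cup \{\chi(\overline{x})\} \cup \{\bigwedge \Phi^0(\overline{x})\} \cup \{\neg \phi(\overline{x})\}$ is consistent for some $\phi \in \Phi_{\overline{a}} \setminus \Phi^0$, and assembling these produces a model of $Th(M)$ with a tuple satisfying $\chi$ but not all of $\Phi_{\overline{a}}$; since $\chi$ isolates the type, in any model of $Th(M)$ the set of realizations of $\chi$ all realize $\mathrm{tp}^M(\overline{a})$ and hence satisfy all of $\Phi_{\overline{a}}$ — contradiction. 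Thus $\Phi_{\overline{a}}$ is finitely generated mod $Th(M)$ after all. (A cleaner phrasing: $\chi(\overline{x}) \to \phi(\overline{x})$ is in $Th(M)$ for each $\phi \in \Phi_{\overline{a}}$, so $\chi$ already generates $\Phi_{\overline{a}}$ mod $Th(M)$, and then finite generation by a subconjunction of $\Phi_{\overline{a}}$ follows from compactness applied to $Th(M) \cup \{\phi : \phi \in \Phi_{\overline{a}}\} \models \chi$ — wait, this needs $\Phi_{\overline{a}}$ plus $\Delta_{\overline{a}}$ to imply $\chi$ mod $Th(M)$, which is exactly the type-axiomatization claim from the first paragraph.)

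The main obstacle I expect is making precise and rigorous the claim that $\Delta_{\overline{a}} \cup \Phi_{\overline{a}}(\overline{x})$ axiomatizes $\mathrm{tp}^M(\overline{a})$ modulo $Th(M)$ — i.e., that any tuple in any model of $Th(M)$ whose quantifier-free type is $\Delta_{\overline{a}}$ and which satisfies all of $\Phi_{\overline{a}}$ realizes the full type $\mathrm{tp}^M(\overline{a})$. This requires knowing that the generic's back-and-forth properties are captured by $Th(M)$, i.e., that $Th(M)$ "knows" $M$ is (the) generic in the relevant sense; this is where one must be careful about whether strongness $A \le M$ is itself first-order expressible via $\Phi_{\overline{a}}$, and it is precisely this subtlety that the finite-generation hypothesis is designed to handle. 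Getting the quantifier complexity bookkeeping right (the $\Phi_{\overline{a}}$ are universal, so the isolating formulas have controlled complexity) is the other place to be careful, but it should go through by the arguments sketched above.
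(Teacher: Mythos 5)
Your left-to-right direction (the one already in \cite{KL}) is essentially the paper's argument and is fine: extend $\overline{b}$ to an enumeration of some $A\leq M$, note that any realization of $\psi_{\overline{a}}$ enumerates an isomorphic copy that is strong in $M$, and use the homogeneity of the generic to extend the isomorphism to an automorphism, then existentially quantify out the extra variables. (One cosmetic point: in an infinite relational language $\Delta_{\overline{a}}$ need not be a formula, but since $\Delta_{\overline{a}}\subseteq\Phi_{\overline{a}}$ it is already implied by $\psi_{\overline{a}}$ modulo $Th(M)$, so you should just use $\psi_{\overline{a}}$ alone, as the paper does.)

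The converse direction, which is the new content of the proposition, has a genuine gap, and you flag it yourself: everything is reduced to the claim that $Th(M)\cup\Delta_{\overline{a}}\cup\Phi_{\overline{a}}(\overline{x})$ entails the full type $tp_M(\overline{a})$ in \emph{every} model of $Th(M)$, and you leave that claim unproven ("the main obstacle"). Your interim compactness bookkeeping is also off: since $\chi$ isolates $tp_M(\overline{a})$ and each $\phi\in\Phi_{\overline{a}}$ lies in that type, the set $Th(M)\cup\{\chi(\overline{x}),\bigwedge\Phi^0(\overline{x}),\neg\phi(\overline{x})\}$ is inconsistent for every $\phi\in\Phi_{\overline{a}}$, so the "assembling these" step does not produce the model you describe; what you would actually need to refute is consistency of $Th(M)\cup\Phi_{\overline{a}}(\overline{x})\cup\{\neg\chi(\overline{x})\}$, which is precisely the unproven claim again.

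The ingredient you are missing, and which the paper uses, is to run the homogeneity argument \emph{inside $M$} in this direction too. Fix $A\leq M$ (as you note, WLOG by genericity) with isolating formula $\theta$ for $tp_M(\overline{a})$. If $\overline{b}\subseteq M$ satisfies $\Phi_{\overline{a}}$, then because $\Delta_{\overline{a}}\subseteq\Phi_{\overline{a}}$ the map $a_i\mapsto b_i$ is an isomorphism onto a substructure $B\in K$, the invariance condition (3) of smoothness gives $\Phi_{\overline{b}}=\Phi_{\overline{a}}$, hence $B\leq M$, and genericity extends the isomorphism to an automorphism of $M$, so $M\models\theta(\overline{b})$. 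Thus every realization of $\Phi_{\overline{a}}$ in $M$ satisfies $\theta$; the paper then extracts by compactness a finite $\Phi^0_{\overline{a}}\subseteq\Phi_{\overline{a}}$ with $\bigwedge\Phi^0_{\overline{a}}\rightarrow\theta$ modulo $Th(M)$, and since $\theta$ in turn implies each $\phi\in\Phi_{\overline{a}}$ modulo $Th(M)$, this finite set generates $\Phi_{\overline{a}}$. Note that this is exactly the back-and-forth idea your heuristic opening paragraph gestures at, but your actual forward argument never deploys it, working instead with arbitrary models of $Th(M)$ where no such homogeneity is available; your residual worry about passing from the implication verified in $M$ to an entailment modulo $Th(M)$ is precisely where the paper's final compactness appeal sits, but the automorphism-extension step inside $M$ is the idea your proposal lacks and cannot do without.
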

\begin{proof}
$(\Rightarrow)$:
Let $A \in K$ and fix an enumeration $\overline{a}=(a_1,\dots, a_n)$ of $A$, and suppose that $M\models \Phi_{\overline{a}}(\overline{a})$. As $M$ is atomic, suppose $\theta(\overline{x})$ isolates $tp_M(\overline{a})$. 
Suppose now that $B\subseteq M$ is enumerated by $\overline{b}= (b_1,\dots, b_n)$ and $M\models \Phi_{\overline{a}}(\overline{b})$. We immediately have that $f: A\rightarrow B$ defined $f(a_i) = b_i$ is an isomorphism. Then $B\in K$, and we get that $\Phi_{\overline{a}} = \Phi_{\overline{b}}$ by definition. Thus, as $M\models \Phi_{\overline{b}}(\overline{b})$,  $B\leq M$. By the genericity of $M$, the isomorphism $f: A\rightarrow B$ extends to an automorphism of $M$. This implies that $M\models \theta(b_1,\dots, b_n)$. Thus, $$M\models \forall \overline{x}(\Phi_{\overline{a}}(\overline{x}) \rightarrow \theta(\overline{x}))$$ 
But, this implies that some finite subset of $\Phi_{\overline{a}}$ must imply $\theta$ modulo $Th(M)$, and hence $\Phi_{\overline{a}}$ itself. 

\bigskip

$(\Leftarrow)$: Let $A\subseteq M$ be enumerated by $\overline{a} = (a_1,\dots, a_n)$. Let $A\subseteq B$ where $B$ is the smallest superstructure of $A$ with $B\leq M$. Enumerate $B$ as $\overline{a}^\smallfrown \overline{b}$, where $\overline{b} =(b_1,\dots, b_m)$. Let $\psi_{\overline{a}^\smallfrown \overline{b}}$ be the formula in the assumption associated with $B$ and $\overline{a}^\smallfrown\overline{b}$. We have that $M\models \psi_{\overline{a}^\smallfrown\overline{b}}(\overline{a}^\smallfrown\overline{b})$. Set $$\theta(y_1,\dots, y_n) := \exists x_1,\dots ,x_m ( \psi_{\overline{a}^\smallfrown\overline{b}}(y_1,\dots, y_n, x_1,\dots, x_m))$$
\textbf{Claim:} $\theta(\overline{x})$ isolates $tp_M(\overline{a})$.\\
Suppose $M\models \theta(\overline{d})$. Then, this implies the existence of some $C$ enumerated $\overline{c} = \overline{d}^\smallfrown(c_1,\dots, c_m)$ where $M\models \psi_{\overline{a}^\smallfrown\overline{b}}(\overline{c})$. We know by assumption that there is an isomorphism $f: B\rightarrow C$ with $f(a_i)= d_i$ and $f(b_i) = c_i$. It follows that $C\leq M$.  By the genericity of $M$, $f$ has an extension to an automorphism of $M$. Thus, $tp(\overline{a}) = tp(\overline{d})$, and it follows that $\theta$ isolates $\overline{a}$.
\end{proof}
\begin{corollary}
\corlab{atomicmerge}
    Let $(K_1,\leq_1)$ and $(K_2,\leq_2)$ be smooth classes in respective languages $\mathcal{L}_1$, $\mathcal{L}_2$ and generics $M_1$, $M_2$. Suppose the merge $(K_1\circledast K_2, \leq_*)$ has a generic $M^*$. Moreover, assume that $M^*|_{\mathcal{L}_1} \cong M_1$ and $M^*|_{\mathcal{L}_2} \cong M_2$.
    If $M_1$ and $M_2$ are atomic then the merged generic $M^*$ is atomic. 
\end{corollary}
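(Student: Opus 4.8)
The plan is to apply the characterization of atomic generics from Proposition~\ref{atomicchar} directly to $M^*$, reducing the statement to a purely syntactic fact about the defining families of universal formulas. Recall from the discussion following the definition of the merge that for $A\in K^*$ with enumeration $\overline a$, one may take the defining family $\Phi^*_{\overline a}$ for $(K^*,\leq_*)$ to be $\Phi^1_{\overline a}\cup\Phi^2_{\overline a}$, where $\Phi^i_{\overline a}$ is the defining family for $A|_{\mathcal L_i}$ in $(K_i,\leq_i)$ (a set of universal $\mathcal L_i$-formulas). So the goal, by the $(\Leftarrow)$ direction of Proposition~\ref{atomicchar} applied to $(K^*,\leq_*)$ and $M^*$, is to produce for each such $\overline a$ a finite subset $\Phi^{*,0}_{\overline a}\subseteq\Phi^*_{\overline a}$ whose conjunction $\psi^*_{\overline a}$ satisfies $M^*\models\forall\overline x(\psi^*_{\overline a}(\overline x)\to\Phi^*_{\overline a}(\overline x))$.

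First I would invoke the hypothesis that $M_1$ is atomic: since $A|_{\mathcal L_1}\in K_1$, the $(\Rightarrow)$ direction of Proposition~\ref{atomicchar} applied to $(K_1,\leq_1)$ and $M_1$ gives a finite $\Phi^{1,0}_{\overline a}\subseteq\Phi^1_{\overline a}$ with conjunction $\psi^1_{\overline a}$ such that $M_1\models\forall\overline x(\psi^1_{\overline a}(\overline x)\to\phi(\overline x))$ for every $\phi\in\Phi^1_{\overline a}$. Each of these is a first-order $\mathcal L_1$-sentence, hence is preserved under isomorphism; since $M^*|_{\mathcal L_1}\cong M_1$ by hypothesis, we get $M^*\models\forall\overline x(\psi^1_{\overline a}(\overline x)\to\phi(\overline x))$ for all $\phi\in\Phi^1_{\overline a}$. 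Symmetrically, from $M_2$ atomic and $M^*|_{\mathcal L_2}\cong M_2$ we obtain a finite $\Phi^{2,0}_{\overline a}\subseteq\Phi^2_{\overline a}$ with conjunction $\psi^2_{\overline a}$ and $M^*\models\forall\overline x(\psi^2_{\overline a}(\overline x)\to\phi(\overline x))$ for all $\phi\in\Phi^2_{\overline a}$.

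Then I would set $\Phi^{*,0}_{\overline a}:=\Phi^{1,0}_{\overline a}\cup\Phi^{2,0}_{\overline a}$, a finite subset of $\Phi^*_{\overline a}$, with conjunction $\psi^*_{\overline a}=\psi^1_{\overline a}\wedge\psi^2_{\overline a}$. Since $\psi^*_{\overline a}$ implies each of $\psi^1_{\overline a}$ and $\psi^2_{\overline a}$, the two facts above combine to give $M^*\models\forall\overline x(\psi^*_{\overline a}(\overline x)\to\phi(\overline x))$ for every $\phi\in\Phi^1_{\overline a}\cup\Phi^2_{\overline a}=\Phi^*_{\overline a}$, i.e.\ $\Phi^*_{\overline a}$ is finitely generated modulo $Th(M^*)$ by a subset of itself. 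As $\overline a$ and $A\in K^*$ were arbitrary, the $(\Leftarrow)$ direction of Proposition~\ref{atomicchar} yields that $M^*$ is atomic.

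The argument is essentially bookkeeping; the only place that needs care is the decomposition $\Phi^*_{\overline a}=\Phi^1_{\overline a}\cup\Phi^2_{\overline a}$ together with the observation that finite generation of $\Phi^*_{\overline a}$ modulo $Th(M^*)$ can be witnessed componentwise — this is exactly where the reduct isomorphisms $M^*|_{\mathcal L_i}\cong M_i$ enter, moving the witnessing sentences from $M_i$ to $M^*$. I expect no serious obstacle; the main subtlety is simply to ensure that the finite sets extracted from Proposition~\ref{atomicchar} are genuinely subsets of the respective $\Phi^i_{\overline a}$ (so that their union is a subset of $\Phi^*_{\overline a}$), as the formulation of Proposition~\ref{atomicchar} requires.
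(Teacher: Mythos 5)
Your proof is correct and follows essentially the same route as the paper's: both apply Proposition~\ref{atomicchar} in each language to obtain finite generators $\psi^1_{\overline a},\psi^2_{\overline a}$, transfer them to $M^*$ via the reduct isomorphisms, and conclude with the converse direction of Proposition~\ref{atomicchar} using the decomposition $\Phi^*_{\overline a}=\Phi^1_{\overline a}\cup\Phi^2_{\overline a}$. The only cosmetic difference is that you note the union of the two finite subsets is already contained in $\Phi^*_{\overline a}$, whereas the paper instead adjoins $\psi^1_{\overline a}\wedge\psi^2_{\overline a}$ to the defining family; both are fine.
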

\begin{proof}
 For simplicity, we will assume $M^*|_{\mathcal{L}_i} = M_i$ for $i=1,2$. 
 Given some $A\in K^*$, write $A_i$ for $A|_{\mathcal{L}_i}$. Let $\overline{a}$ be an enumeration of $A$. Let $\psi_{\overline{a}}^{1}$ and $\psi^2_{\overline{a}}$ be the $\mathcal{L}_1$- and $\mathcal{L}_2$- formulas given by Proposition \ref{atomicchar} such that for $i=1,2$, 
 $$M_i\models \forall \overline{x} (\psi^i_{\overline{a}}(\overline{x})\leftrightarrow \Phi^i_{\overline{a}}(\overline{x}))$$

 Now, suppose $M^*\models \Phi^*_{\overline{a}}(\overline{b})$ for some $\overline{b}\subseteq M^*$. For each $i=1,2$, this immediately implies that $M^* \models \psi^i_{\overline{a}}(\overline{b})$. Now, assume that $M^*\models \psi^1_{\overline{a}}(\overline{b})\land \psi^{2}_{\overline{a}}(\overline{b})$. Then, $M_i \models \psi^{i}_{\overline{a}}(\overline{b})$, and thus $M_i \models \Phi^i_{\overline{a}}(\overline{b})$. But this implies that $M^* \models  \Phi^*_{\overline{a}}(\overline{b})$. Therefore, $M^*\models  \forall\overline{ x}([\psi^1_{\overline{a}}(\overline{x})\land \psi^{2}_{\overline{a}}(\overline{x})] \Leftrightarrow \Phi^*_{\overline{a}}(\overline{x}))$. We can freely impose that $\psi^{1}_{\overline{a}} \land \psi^{2}_{\overline{a}} \in \Phi_A$. By Proposition \ref{atomicchar}, $M^*$ is atomic. 
\end{proof}
From \corref{atomicmerge} we get another immediate result:
\begin{proposition}
\label{categorical}
    Let $(K_1,\subseteq)$ be a Fraïssé class and $(K_2,\leq_2)$ a smooth class. Suppose the merge $(K_1\circledast K_2, \leq_*)$ has a generic $M^*$ such that for $i=1,2$, $M^*|_{\mathcal{L}_i} \cong M_i$, where $M_i$ is the generic of $K_i$. 
    If $M_2$ is $\omega$-categorical, then $M^*$ is $\omega$-categorical. 
\end{proposition}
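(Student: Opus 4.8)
The plan is to deduce this from \corref{atomicmerge} together with the Ryll--Nardzewski theorem. Since $(K_1,\subseteq)$ is a Fraïssé class, its generic $M_1$ is $\omega$-categorical --- here using that $\mathcal{L}_1$ is finite, as it is in every concrete Fraïssé class considered here --- and in particular atomic; and $M_2$ is atomic since it is $\omega$-categorical by hypothesis. Assuming, as we may, that $M^*|_{\mathcal{L}_i}=M_i$ for $i=1,2$, \corref{atomicmerge} gives that $M^*$ is atomic. Because an atomic model has a dense set of isolated $n$-types in $S_n(Th(M^*))$ for every $n$, it then suffices --- in order to conclude via Ryll--Nardzewski that each $S_n(Th(M^*))$ is finite, hence that $M^*$ is $\omega$-categorical --- to show that $M^*$ realizes only finitely many complete $n$-types for each $n$: indeed, a finite dense subset of the Hausdorff space $S_n(Th(M^*))$ must be the whole space, and in an atomic model the realized types are exactly the isolated ones.

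So fix $n$ and an $n$-tuple $\overline{a}$ from $M^*$, and let $B$ be its $\leq_*$-closure in $M^*$ --- the smallest finite substructure with $\overline{a}\subseteq B\leq_* M^*$, which exists just as in the proof of Proposition~\ref{atomicchar}. The key observation is that, since $\leq_*$ restricted to $\mathcal{L}_1$ is $\subseteq$ while its restriction to $\mathcal{L}_2$ is $\leq_2$, the set $B$ is exactly the $\leq_2$-closure of $\overline{a}$ in $M_2=M^*|_{\mathcal{L}_2}$, equipped in addition with the $\mathcal{L}_1$-structure induced from $M^*$. The $\leq_2$-closure operator on $M_2$ is $Aut(M_2)$-equivariant (as $\leq_2$ is cut out by universal formulas, using axiom~(3) of the definition of a smooth class), and $\omega$-categoricity of $M_2$ gives finitely many $Aut(M_2)$-orbits on $n$-tuples; hence $|B|\le N$ for some $N=N(n)$ independent of $\overline{a}$.

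Next I would invoke the homogeneity of the generic: by the same back-and-forth that proves uniqueness of the generic in \thmref{smoothgeneric} (using clause~(2) of \defref{generic}), any $\mathcal{L}^*$-isomorphism between two finite $\leq_*$-strong substructures of $M^*$ extends to an automorphism of $M^*$. Consequently $tp_{M^*}(\overline{a})$ depends only on the isomorphism type of the pair $(B,\overline{a})$ as an $\mathcal{L}^*$-structure with a distinguished $n$-tuple. There are only finitely many such isomorphism types: $B$ has at most $N$ elements; its $\mathcal{L}_1$-reduct embeds as a substructure of the $\omega$-categorical $M_1$ and its $\mathcal{L}_2$-reduct as a substructure of the $\omega$-categorical $M_2$, and each of $M_1,M_2$ has only finitely many isomorphism types of substructures of size $\le N$ (bounded by the number of quantifier-free types of length $\le N$, which is finite); and an $\mathcal{L}^*$-structure on a fixed finite universe is determined by its $\mathcal{L}_1$- and $\mathcal{L}_2$-reducts, since $\mathcal{L}^*=\mathcal{L}_1\cup\mathcal{L}_2$ is a disjoint union. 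Hence $M^*$ realizes finitely many $n$-types, as needed.

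The step I expect to carry the real content is the pair of observations in the middle: that $\omega$-categoricity of $M_2$ is exactly what makes the $\leq_*$-closures uniformly finite, and that the full $\mathcal{L}^*$-type is governed by the isomorphism type of that closure. One must resist the tempting but false shortcut that $tp_{M^*}(\overline{a})$ is determined by the pair of reduct types $tp^{\mathcal{L}_1}(\overline{a})$ and $tp^{\mathcal{L}_2}(\overline{a})$ --- this fails because those types do not record the $\mathcal{L}_1$-relations on the extra points of the closure.
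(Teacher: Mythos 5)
Your proof is correct (up to one caveat you share with the paper) but takes a genuinely different route. The paper argues that every countable $N\models Th(M^*)$ is itself a generic of $(K_1\circledast K_2,\leq_*)$ and then invokes uniqueness of generics (\thmref{smoothgeneric}): $\omega$-categoricity of $Th(M_1)$ and $Th(M_2)$ gives $N|_{\mathcal{L}_i}\cong M_i$, which yields the chain condition (1) of \defref{generic} (Fraïsséness of $K_1$ making the $\mathcal{L}_1$-side of $\leq_*$ vacuous), while atomicity of $M^*$ (\corref{atomicmerge} plus Proposition \ref{atomicchar}) makes the extension property (2) first-order and hence inherited by $N$. You instead combine atomicity of $M^*$ with Ryll--Nardzewski, bounding the types realized in $M^*$: every $n$-tuple lies in a $\leq_*$-strong substructure of size at most $N(n)$ --- your reduction of $\leq_*$-strength in $M^*$ to $\leq_2$-strength in $M_2$ is valid precisely because $K_1$ is Fraïssé, and the uniform bound comes from oligomorphy of $Aut(M_2)$ --- and homogeneity of the generic (the same fact the paper uses inside Proposition \ref{atomicchar}) shows the type is determined by the isomorphism type of that strong envelope. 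Two remarks. First, you do not actually need ``the smallest'' strong superset, whose existence would require smooth intersections for $K_2$ (not assumed here): any $\leq_*$-strong superset of minimal cardinality works, and minimal cardinality is an $Aut(M_2)$-orbit invariant, so your bound survives; the paper's proof of Proposition \ref{atomicchar} contains the same superfluous minimality. Second, your explicit assumption that $M_1$ is $\omega$-categorical (hence atomic), e.g.\ via $\mathcal{L}_1$ finite or finitely many isomorphism types in $K_1$ of each size, is exactly the assumption the paper's proof makes implicitly when it asserts that $Th(M_1)$ is $\omega$-categorical, so it is not a gap relative to the paper and you were right to flag it. As for what each approach buys: the paper's route additionally identifies all countable models of $Th(M^*)$ as generics and makes visible exactly where the argument fails when $K_1$ is merely smooth (condition (1)), which the paper comments on afterward; your route localizes the use of $\omega$-categoricity of $M_2$ to a quantitative statement about uniformly bounded strong closures and avoids having to analyze arbitrary models of the theory, and your closing warning about the false shortcut through the pair of reduct types is well taken.
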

\begin{proof}
We will show that every countable structure $N$ such that $N\models Th(M^*)$, $N\cong M^*$. 
Suppose $N\models Th(M^*)$. We must prove that $N$ has properties (1) and (2) from \defref{generic}. 

\bigskip
To prove (1), note that we have that $N\models Th(M_1)\cup Th(M_2)$. By each of $Th(M_1)$ and $Th(M_2)$ being $\omega$-categorical, $N|_{\mathcal{L}_i}\cong M_i$ for $i=1,2$. In particular, we can write $N|_{\mathcal{L}_2} = \bigcup_{n\in\omega} A_n$ with $A_n\leq_2 A_{n+1}$. It follows by $(K_1,\subseteq)$ being a Fraïssé  class that $A_n\leq_* A_{n+1}$ in the full structure $N$ and $A_n \in K^*$. 

It remains to prove that $N$ has property (2). Suppose $A\leq_* N$ and $A\leq_* B$ for some $B\in K_1\circledast K_2$. Let $\overline{a} = \{a_1,\dots, a_n\}$ be an enumeration of $A$ and let $\overline{b} = \overline{a}^\smallfrown\{b_1,\dots, b_m\}$ be an enumeration of $B$. By \corref{atomicmerge}, $M^*$ is atomic, and so by Proposition \ref{atomicchar} there exist $\psi^*_{\overline{a}}$ and $\psi^*_{\overline{b}}$ such that $M^* \models \forall \overline{x}(\psi^*_{\overline{a}}(\overline{x}) \leftrightarrow \Phi^*_{\overline{a}}(\overline{x}))$ and $M^*\models \forall \overline{y} (\psi^*_{\overline{b}}(\overline{y})\leftrightarrow \Phi^*_{\overline{b}}(\overline{y}))$. 

As $M^*$ is a generic, we have that $$M^* \models \forall(x_1,\dots, x_n)( \psi^*_{\overline{a}}(x_1,\dots, x_n) \rightarrow \exists(y_1,\dots,y_m)( \psi^*_{\overline{b}}(x_1,\dots, x_n; y_1,\dots, y_m)))$$ Thus, as $N \models Th(M^*)$, there is an embedding of $f: B\rightarrow N $ such that $f|_A = id_A$ and $f(B)\leq_* N$. This shows that $N$ has property (3). 
\end{proof}  
In the next section, we will prove that there are classes with saturated generics for which the merged generic is not saturated. Similar arguments as in the next section show that Proposition \ref{categorical} does not necessarily extend when both classes are smooth but not Fraïssé. In the proof of Proposition \ref{categorical}, the proof that every countable $N\models Th(M^*)$ has property (2) always holds for $\omega$-categorical smooth classes (not necessarily Fraïssé) $(K_1,\leq_1)$, $(K_2,\leq_2)$; it is the proof that any such $N$ has property (1) that may fail.

\subsubsection{Saturated generics and merges}\label{saturation}
We now ask how the merged generic behaves when both of the original generics are saturated. In contrast to atomic generics, the property of saturation transfers poorly to the generic of a merge. This is due, in part, by the relation $\leq_*$ in the merged generic being dependent the relations on the original classes; the result is that the generic of the merge cannot satisfy all the new types created. 

To make this concrete, we will recall some results and definitions about saturation and smooth classes. For a full treatment of these results, see \cite{BALDWIN19961}.  
\begin{definition}
    Suppose $(K,\leq)$ is a smooth class of finite structures. We say for $A,B\in K$, $(A,B)$ is a \textit{ $\leq$-minimal pair }if for every $A\subseteq B'\subsetneq B$, $A\leq B'$ but $A\not\leq B$.  $(K,\leq)$ has an \textit{infinite chain of $\leq$-minimal pairs} if there is a chain $$A_0\subseteq A_1\subseteq \dots A_n \subseteq \dots$$
such that $A_i\in K$ and $(A_i,A_{i+1})$ is a $\leq$-minimal pair for $i\in \omega$. 
\end{definition}
We reprove a well-known result about smooth classes and saturation:
\begin{theorem}
\thmlab{baldwinminpair}
If $(K,\leq)$ has a generic $M$, smooth intersections, and an infinite chain of $\leq$-minimal pairs, then $M$ is not saturated. 
\end{theorem}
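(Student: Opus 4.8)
The plan is to build, using the infinite chain of $\leq$-minimal pairs, a finitely satisfiable type over a finite (or countable) parameter set in $M$ that $M$ omits, contradicting saturation. First I would fix an infinite chain $A_0 \subseteq A_1 \subseteq \cdots$ of $\leq$-minimal pairs $(A_i, A_{i+1})$ with each $A_i \in K$. Since $M$ is generic, $\emptyset = A_0 \leq M$, and by repeatedly applying condition (2) of \defref{generic} along the chain one obtains an increasing sequence of embeddings realizing the $A_i$ as $\leq$-substructures of $M$; so without loss of generality $A_i \leq M$ for all $i$, and $\bigcup_i A_i =: A_\infty \subseteq M$. Fix an enumeration $\overline{a} = (a_0, a_1, \dots)$ of $A_\infty$ compatible with the chain.

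The key step is to write down the type $p(x)$ over $A_\infty$ (or, if one wants a type over a finite set, one adjusts below) that says: $x \notin A_\infty$, and for every $i$, the finite structure $A_i \cup \{x\}$ — equipped with whatever atomic $\mathcal{L}$-data is forced — sits inside $M$ in such a way that $x$ is "attached like a minimal extension", i.e. $p$ asserts, for each $i$, the negation of $\Phi^{(i)}$, where $\Phi^{(i)}$ is the universal formula set witnessing $A_i \leq (\text{the ambient structure})$ when the new point is added, so that $A_i \cup\{x\}$ is an extension of $A_i$ that is \emph{not} $\leq$ the structure it generates. More carefully: for each $i$ I want the formula $\chi_i(\overline{a}_{\le i}, x)$ expressing the atomic diagram of some $\leq$-minimal pair extension together with the \emph{failure} of smoothness of that extension at level $i$; finite satisfiability of $\{\chi_i : i \in \omega\}$ in $M$ follows because for each $n$ the initial segment $A_{n+1} \leq M$ already realizes $\chi_0, \dots, \chi_n$ with $x$ interpreted as a point of $A_{n+1} \setminus A_n$ — here one uses that $(A_i, A_{i+1})$ being a minimal pair means $A_i \leq A_n$ for $i < n$ but the jump at the top fails, which is exactly the data encoded by the $\chi_i$.

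Then the main obstacle, and where smooth intersections enters, is showing $M$ \emph{omits} $p$: suppose $b \in M$ realized $p$. Consider the $\leq$-closure or rather the relationship between $A_\infty \cup \{b\}$ and the structures $A_i \cup \{b\}$ inside $M$. Using smooth intersections I would argue that for some $i$, $A_i \cup \{b\}$ must be $\leq$ the finite structure it spans inside $M$ (roughly: $b$ together with its finitely many relations to $A_\infty$ lives over some $A_i$, and intersecting a $\leq$-substructure of $M$ containing $A_i \cup \{b\}$ with $A_{i+1} \cup \{b\}$ gives, via smooth intersections, that $A_i \leq A_{i+1}$ as a minimal-pair jump that \emph{does} close up — contradicting $\chi_i$, hence contradicting $(A_i, A_{i+1})$ being a minimal pair, or contradicting that $b \models p$). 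The delicate point to get right is that $b$'s attachment data is finite, so it factors through some finite stage $A_i$, and that smooth intersections is precisely the hypothesis needed to push the $\leq$-relation down from an ambient finite $\leq$-substructure of $M$ to the relevant sub-pair; I expect writing this descent cleanly to be the technical heart of the argument. Finally, $p$ is a type over the countable set $A_\infty$; to get a contradiction with saturation one either invokes $\omega_1$-saturation, or (better) observes the whole construction can be carried out with $x$ attached over a \emph{fixed finite} $A_N$ using a single minimal pair $(A_N, A_{N+1})$ iterated, producing countably many pairwise "independent" realizations and hence a type over a finite set that $M$ cannot realize — I would use whichever formulation matches the paper's ambient notion of saturation.
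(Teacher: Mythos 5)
Your plan breaks at the very first step: you cannot realize the chain inside $M$ with every $A_i\leq M$. Genericity (condition (2) of \defref{generic}) only lets you extend along \emph{strong} extensions $A\leq B$, and in a chain of $\leq$-minimal pairs the consecutive extensions are precisely not strong: $A_i\not\leq A_{i+1}$. Worse, since $\leq$ is witnessed by universal formulas, $A_i\leq M$ together with $A_i\subseteq A_{i+1}\subseteq M$ would force $A_i\leq A_{i+1}$ (universal formulas pass down to substructures containing the parameters), contradicting minimality; so the configuration $A_\infty=\bigcup_i A_i\subseteq M$ with all $A_i\leq M$ simply cannot occur in $M$, and the type you want to write over $A_\infty$ has no base to stand on. There are further problems even granting the setup: your $\chi_i$ are supposed to express the \emph{failure} of the set $\Phi^{(i)}$ of universal formulas, but that failure is the negation of an infinite conjunction and is not a formula (nor a set of formulas) one can simply include in a type without choosing witnesses; your finite-satisfiability claim that a point of $A_{n+1}\setminus A_n$ realizes $\chi_0,\dots,\chi_n$ leans on ``$A_i\leq A_n$ for $i<n$,'' which is false in a minimal-pair chain (in fact $A_i\not\leq A_n$, by the same downward-transfer argument applied inside $A_n$); and your type sits over the infinite parameter set $A_\infty$, so omitting it does not refute the relevant notion of saturation --- the repair you sketch at the end is not worked out.

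The paper's proof takes a different and cleaner shape that you could use to salvage the outline: it builds a purely \emph{positive} type in the finitely many variables $x_1,\dots,x_k$ with $k=|A_0|$, over the empty parameter set. The type $\Gamma$ says of a $k$-tuple that it realizes the diagram of $A_0$ and that, for each $i$, there exist further points realizing the diagram of $A_i$ over it; no strongness and no failure of strongness is asserted anywhere in the type. Finite satisfiability is immediate because a strong copy of $A_n$ in $M$ (which exists by genericity applied to $\emptyset\leq A_n$) realizes $\Gamma_0,\dots,\Gamma_n$. The omission argument is then structural rather than syntactic: if $A^0\subseteq M$ realized $\Gamma$, one takes a finite $B$ with $A^0\subseteq B\leq M$, uses smooth intersections to get $B\cap A^i\leq A^i$ for the witnessing copies $A^i$, and uses the minimal-pair property plus transitivity to force $A^i\subseteq B$ for every $i$, contradicting the finiteness of $B$. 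So the right division of labor is: assert only the existence of the copies in the type, keep the variables and parameters finite, and let smooth intersections and minimality do their work in the omission step, not in the statement of the type.
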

\begin{proof}
    Let $\{A_i\}_{i\in \omega}$ be an infinite chain of $\leq$-minimal pairs. Let  $k = |A_0|$ and let $\Delta_i(x_1,\dots, x_k,\overline{y})$ denote the elementary diagram of $A_i$ over $A_0$. Set $$\Gamma_i(x_1,\dots, x_k) = (\Delta_0(x_1,\dots, x_k) \land \exists \overline{y} (\Delta_i(x_1,\dots, x_k,\overline{y}))$$
    Let $\Gamma^n(x_1,\dots, x_k) = \{\Gamma_i(x_1,\dots, x_k): i\leq n\}$. Note that $\Gamma^n$ is realized in $M$ by a strong embedding of $A_n$ into $M$. Let  $\Gamma:= \bigcup_{n\in\omega}\Gamma^n$.  It follows that $Th(M)\cup \Gamma$ is finitely satisfiable, and hence satisfiable by compactness. However, $M$ does not realize $\Gamma$.  To see this, suppose $M\models \Gamma(A^0)$ for some $A^0\subseteq M$. There is some finite $B\subseteq M$ such that $A^0\subseteq B\leq M$. Suppose $n -1 = |B|$. As $M\models \Gamma(A^0)$, there exist sets $A^i$ such that $A^0\subseteq A^i \subseteq M$ and $(A^i, A^{i+1})$ are $\leq$-minimal pairs for $i\leq n$. Note that $B\cap A^i \leq A^i$, so if $B\cap A^i \neq A^i$, then $A^0 \leq B\cap A^i$ by definition of a $\leq$-minimal pair .  But this is a contradiction, so $A^i\subseteq B$ for all $i\in \omega$. However, this implies $|B|\geq n$, which is a contradiction.  
\end{proof}
\begin{definition}
    A smooth class $(K,\leq)$  has \textit{many minimal pairs} if there exists some $m\in \omega$ such that for all $A\in K$ with $|A|\geq m$ and for all $n\geq 1$, there exists some $A' \in K$ such that $A\leq A'$, $|A'| = |A|+n$, and there is some $B\in K$ for which $(A',B)$ is a $\leq$-minimal pair
\end{definition}
\begin{example}
Note that for $\alpha\in \mathbb{Q}\cap (0,1)$, the class of Shelah-Spencer $\alpha$-graphs $(K_\alpha,\leq_\alpha)$ has many minimal pairs and the generic $M_\alpha$ of $(K_\alpha,\leq_\alpha)$ is saturated by \cite{gunatilleka2018theoriesbaldwinshihypergraphsatomic}. \thmref{genminpairs} will show that for $\alpha$, $\beta \in \mathbb{Q}\cap (0,1)$, the generic $M^*$ of the merge $(K_\alpha \circledast K_\beta, \leq_*)$ is not saturated, despite the fact that $M^*|_{\mathcal{L}_\alpha} \cong M_\alpha$ and $M^*|_{\mathcal{L}_\beta}\cong M_\beta$. In fact, $Th(M^*)$ is not small. This shows that there exist classes with saturated generics whose merged generic is not  saturated, even when both classes satisfy the assumptions of Proposition \ref{firstmerge}.
\end{example}

\begin{theorem}
\thmlab{genminpairs}
Suppose $(K_1,\leq_1)$ and $(K_2,\leq_2)$ are smooth classes, each closed under substructure in the resp. languages $\mathcal{L}_1$, $\mathcal{L}_2$.  Suppose both classes have many minimal pairs, and suppose the merge $(K^*,\leq_*)$ has a generic $M^*$. Then $M^*$ is not saturated. 
\end{theorem}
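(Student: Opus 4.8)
The plan is to imitate the proof of \thmref{baldwinminpair}: exhibit a partial type $\Gamma(\overline{x})$ over $\emptyset$ that is consistent with $Th(M^*)$ but is realized in no generic of \defref{generic}-form, in particular not in the countable generic $M^*$; since $M^*$ is countable, producing one such omitted type shows $M^*$ is not saturated (and, as in the remark preceding the theorem, varying the construction will show $Th(M^*)$ is not small). The wrinkle relative to \thmref{baldwinminpair} is that $K^*$ need not itself have smooth intersections, so the ``trapping'' half of the argument must be carried out one language at a time inside the reducts. The first, and I expect hardest, step is to manufacture an infinite chain $A_0\subseteq A_1\subseteq A_2\subseteq\cdots$ in $K^*$ with every $(A_j,A_{j+1})$ a $\leq_*$-minimal pair. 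Since $M^*$ is a generic, $M^*=\bigcup_n W_n$ with $W_n\leq_* W_{n+1}$ and $W_n\in K^*$, so $K^*$ (closed under substructure by hypothesis) contains structures of arbitrarily large size and $C_{(K_1,\leq_1)}\cap C_{(K_2,\leq_2)}$ is infinite; fix $A_0\in K^*$ with $|A_0|$ above both constants $m_1,m_2$ witnessing that $K_1,K_2$ have many minimal pairs.

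To build the chain I would use a staircase that alternates the two languages. At an even step I apply many minimal pairs in $K_1$ to replace $A_j|_{\mathcal{L}_1}$ by a $\leq_1$-strong extension which is the base of a $\leq_1$-minimal pair (a ``preparation''), while on the $\mathcal{L}_2$ side I realize a $\leq_2$-minimal pair whose base was prepared at the previous odd step; odd steps swap the roles of $\mathcal{L}_1$ and $\mathcal{L}_2$. Because a strong extension keeps every intermediate structure strong (universal formulas pass to substructures) and a minimal pair keeps every proper intermediate strong, each step produced this way is a genuine $\leq_*$-minimal pair: the ``min-pair side'' makes $A_j\not\leq_* A_{j+1}$, and on the other side and at all proper intermediates both reducts remain strong. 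The delicate point — the main obstacle — is the cardinality bookkeeping: the $\mathcal{L}_1$- and $\mathcal{L}_2$-reducts of $A_{j+1}$ share a universe, so the increment contributed by a minimal pair on one side must be matched to the increment of a preparation on the other; this is where one uses both the freedom in the parameter $n$ in the definition of ``many minimal pairs'' and the fact (forced by the merge having a generic, via \thmref{mergeexistencethm}-type considerations) that $K_1$ and $K_2$ realize a common infinite set of cardinalities.

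With the chain in hand, let $\Delta_n$ be the atomic diagram of $A_n$ relative to a fixed enumeration extending that of $A_0$, set $\Gamma_n(\overline{x})=\Delta_0(\overline{x})\wedge\exists\overline{y}\,\Delta_n(\overline{x},\overline{y})$, and put $\Gamma=\{\Gamma_n:n\in\omega\}$. Each $\Gamma_n$ is realized in $M^*$: since $\emptyset\leq_* A_n$ always, genericity gives a strong copy of $A_n$ in $M^*$, and the copy of $A_0$ inside it witnesses $\Gamma_n$; as the $\Delta_n$ increase, $Th(M^*)\cup\Gamma$ is finitely satisfiable, hence consistent. To see $\Gamma$ is omitted in $M^*$, suppose $\overline{a}$ realizes it; let $A^0:=\overline{a}$, choose finite $B\leq_* M^*$ with $A^0\subseteq B$, and put $N=|B|$. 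A copy of $A_{N+1}$ witnessing $\Gamma_{N+1}$ carries an internal $\leq_*$-minimal-pair chain $A^0=A^{(0)}\subseteq\cdots\subseteq A^{(N+1)}$, and one shows $A^{(j)}\subseteq B$ by induction exactly as in \thmref{baldwinminpair}: choosing finite $B'\leq_* M^*$ with $B\cup A^{(j+1)}\subseteq B'$ gives $B\leq_* B'$, and intersecting with $A^{(j+1)}$ inside the reduct $M^*|_{\mathcal{L}_i}$ that makes $(A^{(j)},A^{(j+1)})$ a $\leq_i$-minimal pair yields $B\cap A^{(j+1)}\leq_* A^{(j+1)}$; were $B\cap A^{(j+1)}$ proper, the minimal-pair property would force $A^{(j)}\leq_* B\cap A^{(j+1)}\leq_* A^{(j+1)}$, contradicting $A^{(j)}\not\leq_* A^{(j+1)}$. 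Hence $A^{(N+1)}\subseteq B$, so $N=|B|\ge|A_{N+1}|\ge|A_0|+N+1$, a contradiction, and $\Gamma$ is omitted. Finally, running the chain construction with a sub-chain indexed by an arbitrary $S\subseteq\omega$ (realizing minimal pairs only with the increments prescribed by $S$) produces $2^{\aleph_0}$ pairwise-incompatible completions of the associated types, so $Th(M^*)$ is not small; in particular $M^*$ is not saturated.
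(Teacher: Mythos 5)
Your proposal is correct and follows essentially the same route as the paper: an alternating ``realize a $\leq_i$-minimal pair in one language while preparing one of matching size in the other'' construction of an infinite chain of $\leq_*$-minimal pairs, followed by the omitted-type argument of \thmref{baldwinminpair}. The only real difference is cosmetic: you rerun that omission argument reduct-by-reduct (quietly using smooth intersections of the $K_i$), whereas the paper simply cites \thmref{baldwinminpair} (quietly using smooth intersections of $K^*$, which would follow from that of the $K_i$), so both versions lean on the same unstated hypothesis.
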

\begin{proof}
We will show $(K^*,\leq_*)$ has an infinite chain $\{A_i\}_{i\in\omega}$ of $\leq_*$-minimal pairs, and get the conclusion by applying \thmref{baldwinminpair}.
By assumption, there exists some $m\in \omega$ such that for $i=1,2$, if $A\in K_i$ with $|A|\geq m$, then there is some $A' \in K_i$ such that $A\leq_i A'$ and there exists some $B\in K_i$ such that $(A',B)$ is a $\leq_i$-minimal pair. 

Choose some $A_0\in K^*$ such that $|A_0|\geq m$. There exists some $B_1 \in K_1$ so that $(A_0|_{\mathcal{L}_1}, B_1)$ forms a $\leq_1$-minimal pair. On the same universe as $B_1$, by assumption, we can define $B_2\in K_2$ so that $A_0\leq_2 B_2$ and there exists some $C\in K_2$ so that $(B_2,C)$ forms a $\leq_2$-minimal pair. Define $A_1$ on the universe of $B_1$ so that $A_1|_{\mathcal{L}_i} = B_i$ for $i=1,2$. Then, $A_0\leq_2 A_1$ and $(A_0|_{\mathcal{L}_1}, A_1)$ is a $\leq_1$-minimal pair. Thus, $(A_0,A_1)$ is a $\leq_*$-minimal pair. 
    
Suppose $n$ is odd and $A_n$ has been defined so that there exists some $C_2\in K_2$ for which $(A_n|_{\mathcal{L}_2}, C_2)$ is a $\leq_2$-minimal pair. By assumption, we can find some $C_1 \in K_1$ with the same universe as $C_2$ for which $A_n\leq_1 C_1$ and there exists some $D_1\in K_1$ for which $(C_1,D_1)$ forms a $\leq_1$-minimal pair. Let $A_{n+1}$ be defined on the same universe as $C_2$ so that $A_{n+1}|_{\mathcal{L}_i} = C_i$ for $i=1,2$. Then, $A_n\leq_1 A_{n+1}$ and $(A_n, A_{n+1})$ forms a $\leq_2$-minimal pair. Then, $(A_n, A_{n+1})$ is a $\leq_*$-minimal pair. For $n$ even, we switch the roles of $\mathcal{L}_1$ and $\mathcal{L}_2$ in the odd case. 

Carrying out this construction, we have an infinite chain $\{A_i\}_{i\in \omega}$ of $\leq_*$-minimal pairs. 
\end{proof}
\section{Structural Ramsey Theory, EPPA Classes, and Merges}\label{eppa}
We now turn our attention to the branch of study of structural Ramsey theory. For many years, there has been a campaign to identify classes with the EPPA and Ramsey properties (defined below), especially after the links to topological dynamics were discovered. Most work has been done in the context of Fraïssé classes and, in particular, classes of rigid structures. It is notable that merges of Fraïssé classes have shown up as many of the examples of classes with the Ramsey property and as a way to "rigidify" classes. 

The current goal is to try to expand the ideas used on Fraïssé classes to gain an understanding of smooth classes and their merges and which smooth classes have the Ramsey properties. The Ramsey property is hard to achieve in this context; however, a stepping stone is the Hrushovski/EPPA property. In this section we will expand and generalize on previous work to get some classes which do indeed have the EPPA property and for which the EPPA property transfers nicely to some merges of these classes.   
\subsection{Background \& Motivation}
Merges of Fraïssé classes have played an important role in the development of structural Ramsey theory. 
We first recall the definition of the Ramsey property in the smooth class setting:
\begin{definition}
    Suppose $(K, \leq)$ is a smooth class. Let $\binom{B}{A} =\{C\leq B: C\cong A\}$. We say $K$ has the \textit{Ramsey property} if for any $A\leq B$ with $A,B\in K$, there exists some $C\in K$ for which $B\leq C$ for every coloring $c: \binom{C}{A} \rightarrow \{0,1\}$, there is some $B'\in \binom{C}{B}$ such that $c|_{\binom{B'}{A}}$ is constant. 
\end{definition}
It is important to note that we are taking the Ramsey property to be on colorings of \textit{structures} as opposed to colorings of \textit{embeddings}. This is mostly to avoid ambiguity and to include classes of structures which are not rigid.

\bigskip
Much of the focus of the study of Ramsey classes is on classes of rigid structures because of their connection to extreme amenability. The so-called "KPT-Correspondence" coming from the main results of \cite{kechris2004fraisselimitsramseytheory} (and generalized by \cite{ghadernezhad2016automorphismgroupsgenericstructures}) asserts that if a smooth class $(K,\leq)$ is a class of rigid structures with generic $M$, then $Aut(M)$ is extremely amenable if and only if $(K,\leq)$ has the Ramsey property.

Many Fraïssé classes which were proved to have the Ramsey property are indeed merges of Fraïssé classes. Let $(K_{LO},\subseteq)$ denote the Fraïssé class of all finite linear orders. When $(K_F,\subseteq)$ is a Fraïssé class with fAP, by \cite{NESETRIL1983183}, $K_{LO}\circledast K_F$ has the Ramsey Property. This is, however, not always the case for any Fraïssé class merged with $K_{LO}$. For example, the class $(K_E,\subseteq)$ of finite structures with an equivalence relation has the Ramsey property, however, $K_{LO}\circledast  K_E$ does not.

Merges behave better once we are merging classes which are already rigid and known to have the Ramsey property, as shown in the below theorem by \cite{bodirsky2012newramseyclassesold}:
\begin{theorem}
\thmlab{bodirsky}
    If $(K_1, \subseteq)$ and $(K_2,\subseteq)$ are Fraïssé classes of rigid structures (i.e., every automorphism is trivial) and $K_1$, $K_2$ both have the Ramsey property and dAP, then $K_1\circledast K_2$ has the Ramsey Property. Moreover, if $K_3$ is a Fraïssé class of rigid structures with the Ramsey property, and $K_2 \circledast K_{LO}$ has the Ramsey property, then $K_3 \circledast K_2$ has the Ramsey property.
\end{theorem}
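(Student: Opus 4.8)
The plan is to prove both statements through the embedding form of the Ramsey property, obtaining the first by a partite construction that feeds off the Ramsey properties of $K_1$ and $K_2$ separately, and the second by reducing it to the first after adjoining and then discarding a linear order. Two preliminary observations drive everything. First, since $K_1$ and $K_2$ are Fraïssé classes, $\leq_*$ on $K_1\circledast K_2$ is just $\subseteq$, and since every structure of $K_1$ and $K_2$ is rigid so is every structure of the merge (an automorphism of $A\in K^*$ restricts to an automorphism of each $A|_{\mathcal{L}_i}$); for classes of rigid structures the Ramsey property as defined coincides with its embedding version, so it suffices to colour embeddings. Second, because $\mathcal{L}^*=\mathcal{L}_1\sqcup\mathcal{L}_2$, an injection of the universe of $A$ into $C\in K^*$ is an $\mathcal{L}^*$-embedding exactly when it is simultaneously an $\mathcal{L}_1$-embedding of $A|_{\mathcal{L}_1}$ into $C|_{\mathcal{L}_1}$ and an $\mathcal{L}_2$-embedding of $A|_{\mathcal{L}_2}$ into $C|_{\mathcal{L}_2}$; that is, $\mathrm{Emb}(A,C)=\mathrm{Emb}(A|_{\mathcal{L}_1},C|_{\mathcal{L}_1})\cap\mathrm{Emb}(A|_{\mathcal{L}_2},C|_{\mathcal{L}_2})$ as sets of maps on universes. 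This identity is the link between the merge and its two factors.

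For the first statement, note that $K^*:=K_1\circledast K_2$ is again a Fraïssé class of rigid structures with dAP, the dAP transferring to the merge by the kind of argument indicated just after \thmref{mergeexistencethm}. Given $A\subseteq B$ in $K^*$, one must build $C\in K^*$ such that every $2$-colouring of $\binom{C}{A}$ is constant on $\binom{B'}{A}$ for some $B'\in\binom{C}{B}$. I would run a Nešetřil--Rödl-style partite construction in which the ``parts'' are copies of $B$: start from a picture in which the copies of $B$ that matter are laid out disjointly, process the copies of $A$ one at a time, and at each step glue in a fresh amalgam and invoke the Ramsey property of $K_1$ on the $\mathcal{L}_1$-reduct, or of $K_2$ on the $\mathcal{L}_2$-reduct, to render that step homogeneous; dAP of \emph{both} factors is what keeps every amalgam produced along the way inside $K^*$. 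Iterating over all copies of $A$ and reading off the identity above then yields the desired $C$. The main obstacle is the interaction of the two coordinates: a monochromatic copy of $B|_{\mathcal{L}_1}$ delivered by Ramsey in $K_1$ and a monochromatic copy of $B|_{\mathcal{L}_2}$ delivered by Ramsey in $K_2$ sit on a priori unrelated subsets, and because the structures are rigid there is no ``rotation'' available to slide one onto the other to form a genuine copy of $B$. A bare product-Ramsey argument applied to $\mathrm{Emb}(A|_{\mathcal{L}_1},C|_{\mathcal{L}_1})\times\mathrm{Emb}(A|_{\mathcal{L}_2},C|_{\mathcal{L}_2})$ would return an unmatched pair; the partite construction circumvents this by freezing the copies of $B$ as the parts from the outset, so that each Ramsey call is applied to a gluing map along the diagonal rather than to a free colouring. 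Keeping this bookkeeping correct, and every amalgamation inside $K^*$, is the technical heart, and rigidity is precisely the hypothesis that makes the relevant identifications controllable.

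For the second statement, $K_3$ is a Fraïssé class of rigid structures with the Ramsey property (hence with amalgamation, and, for a rigid class, with disjoint amalgamation), while $K_2\circledast K_{LO}$ has the Ramsey property and, being an ordered class, is a Fraïssé class of rigid structures with dAP (two distinct points can never be identified in an amalgam carrying a compatible linear order). Applying the first statement to the pair $K_3$ and $K_2\circledast K_{LO}$ shows that $K_3\circledast K_2\circledast K_{LO}$ has the Ramsey property. Finally one forgets the order: $K_3\circledast K_2\circledast K_{LO}$ is exactly $K_3\circledast K_2$ expanded by all finite linear orders, and the Ramsey property of a class expanded by all linear orders descends to its reduct. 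Indeed, given $A\subseteq B$ in $K_3\circledast K_2$ and a colouring of $\binom{C}{A}$, fix arbitrary linear orders on $A$ and $B$; since all linear orders on $n$ points are isomorphic, every copy of $A$ inside any structure carries a unique induced copy of the ordered $A$, so the colouring transports verbatim to the ordered class, where we obtain a monochromatic ordered copy of $B$, and dropping the order yields the monochromatic copy of $B$ sought. Hence $K_3\circledast K_2$ has the Ramsey property.
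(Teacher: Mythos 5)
You should first note that the paper does not prove this statement at all: it is quoted from \cite{bodirsky2012newramseyclassesold}, and the remark immediately following it records that the known proof runs through $\omega$-categoricity and the KPT correspondence (extreme amenability of the automorphism groups of the generics), not through a direct combinatorial argument. So your plan is a genuinely different route in principle, but as written it does not deliver a proof. For the first claim, your two preliminary observations (rigidity of the merge, $\mathrm{Emb}(A,C)=\mathrm{Emb}(A|_{\mathcal{L}_1},C|_{\mathcal{L}_1})\cap\mathrm{Emb}(A|_{\mathcal{L}_2},C|_{\mathcal{L}_2})$, and the equivalence of copy- and embedding-colourings for rigid structures) are fine, but everything after them is a plan rather than an argument: you name a Ne\v{s}et\v{r}il--R\"odl-style partite construction, you yourself identify the crux --- that the monochromatic $\mathcal{L}_1$-copy and the monochromatic $\mathcal{L}_2$-copy produced by the two Ramsey hypotheses live on a priori unrelated sets and rigidity gives no way to move one onto the other --- and then you assert that ``freezing the copies of $B$ as the parts'' resolves it, without defining the partite picture, stating the partite lemma being invoked, or verifying that each amalgam performed stays in $K_1\circledast K_2$. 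That coordination step is exactly the content of the theorem; a combinatorial proof of the free-superposition Ramsey theorem is a substantial construction, so the first part stands as a sketch with its essential step missing.

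The second claim contains a step that is false as stated: the Ramsey property does not descend from $K_3\circledast K_2\circledast K_{LO}$ to $K_3\circledast K_2$ by ``fixing arbitrary orders on $A$ and $B$ and transporting the colouring.'' A copy of $A$ inside the reduct $C$ does carry an induced linear order, but the induced expansion need not be isomorphic to your chosen $(A,<_A)$ as a structure in the full language: the unique order-isomorphism between two $|A|$-point chains need not respect the $\mathcal{L}_3\cup\mathcal{L}_2$-relations, so the copies of $A$ in $C$ split into several expansion types, and a monochromatic ordered copy of $(B,<_B)$ controls only the copies of $A$ of the one chosen type, leaving the others uncontrolled inside the same copy of $B$. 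If this descent were valid, the Ne\v{s}et\v{r}il--R\"odl theorem for ordered graphs would immediately give the Ramsey property for the class of finite graphs, which fails. Note also that the second statement does not assume $K_2$ is rigid (that is precisely its point) nor that $K_3$ has dAP, whereas your reduction to the first statement needs dAP for $K_3$; your parenthetical ``for a rigid class, with disjoint amalgamation'' is unjustified, since rigidity together with amalgamation does not give disjoint amalgamation in general. The cited proof avoids both problems by working with extreme amenability of the relevant automorphism groups rather than by adjoining and then discarding the order.
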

This shows, for example, if $K_1$ and $K_2$ are two Fraïssé classes with fAP, $(K_1\circledast K_{LO})*(K_2\circledast K_{LO})$ has the Ramsey property (when we take the languages of each class to be disjoint) and, moreover, so does $K_1 \circledast K_2 \circledast K_{LO}$. It is not yet clear when \thmref{bodirsky} will extend to the non-rigid case, or to the general smooth class setting. The proof of the above theorem relies heavily on $\omega$-categoricity and the results of \cite{kechris2004fraisselimitsramseytheory} in rigid settings. 

It is our current goal to identify smooth classes with the Ramsey property, but this is a difficult task. As many authors have noted,  classes with the Hrushovski property (or EPPA) (defined below) often have connections with classes with the Ramsey property. For example, $K_F$ in the example above has been proven to have EPPA, as well as other classes which have a Ramsey property in some merge. Thus, it seems prudent to study EPPA in the smooth class case in hopes of getting an eventual grasp of the Ramsey property.

We now recall a definition for EPPA in the context of smooth classes:
\begin{definition}
\deflab{eppa}
    Let $(K,\leq)$ be a smooth class. We say $(K,\leq)$ has $\leq$-EPPA if for any $A\in K$, and for any partial automorphisms $f_1,\dots, f_n$ of $A$, where, for each $i$, $dom(f_i)\leq A$ and $range(f_i)\leq A$, then there exists some $B$ with $A\leq B$ such that there exist automorphisms $g_1,\dots, g_n$ of $B$ with $f_i\subseteq g_i$. (When $\leq$ is substructure, we simply say EPPA.)
\end{definition}
There is a clear relationship between classes with EPPA and amenability, shown in \cite{kechris2006turbulenceamalgamationgenericautomorphisms}, which \cite{Evans_2019} realized extended to $\leq$-EPPA:
\begin{proposition}
    Let $(K,\leq)$ be a smooth class with a generic $M$. If $(K,\leq)$ has $\leq$-EPPA, then $Aut(M)$ is amenable.  
\end{proposition}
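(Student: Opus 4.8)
The plan is to adapt the Kechris--Rosendal argument (cited above, and observed by Evans to survive the passage to $\leq$-EPPA): inside $G:=Aut(M)$ one produces a dense subgroup that is an increasing union of \emph{compact} (profinite) subgroups, and then runs the usual averaging argument. I would use the characterization that a Polish group is amenable if and only if every continuous action on a nonempty compact Hausdorff space admits an invariant Borel probability measure; here $G$ is a closed subgroup of $Sym(M)$, and $M$ is countable (it is a countable increasing union of finite structures by \defref{generic}(1)), so $G$ is indeed Polish.

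The first step is to build a \emph{witnessing chain}: a cofinal $\leq$-chain $C_0\leq C_1\leq\cdots$ of finite substructures of $M$ with $M=\bigcup_k C_k$ such that for every $k$ there is an intermediate $D^{(k)}$ with $C_k\leq D^{(k)}\leq C_{k+1}$ for which $C_{k+1}$ is a $\leq$-EPPA witness, i.e.\ every partial automorphism of $D^{(k)}$ with strong domain and strong range extends to an automorphism of $C_{k+1}$. This is arranged by recursion: starting from an exhausting chain as in \defref{generic}(1), given $C_k$ I choose a finite strong substructure $D^{(k)}$ of $M$ with $C_k\subseteq D^{(k)}$, large enough to keep the chain cofinal (note $C_k\leq D^{(k)}$, since the defining formulas $\Phi$ of $\leq$ are universal and hence persist to intermediate finite substructures of $M$); then $\leq$-EPPA (\defref{eppa}) applied to $D^{(k)}$, using all of its finitely many partial automorphisms with strong domain and range, yields $D^{(k)}\leq E$ with the required extension property, and \defref{generic}(2) lets me realize a copy $C_{k+1}$ of $E$ strongly over $D^{(k)}$ inside $M$. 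Transitivity of $\leq$ makes every partial automorphism of $C_k$ with strong domain and range a partial automorphism of $D^{(k)}$ of the same kind, so it extends into $Aut(C_{k+1})$.

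Now let $G^\circ_n:=\{g\in G:g(C_j)=C_j\text{ for all }j\geq n\}$ and $G^\circ:=\bigcup_n G^\circ_n$, each a subgroup of $G$, with $G^\circ_n\subseteq G^\circ_{n+1}$. The assignment $g\mapsto(g|_{C_j})_{j\geq n}$ identifies $G^\circ_n$ with the set of coherent sequences inside $\prod_{j\geq n}Aut(C_j)$, a closed subgroup of this product of finite groups, and one checks the identification is a homeomorphism for the subspace topology from $G$ (basic neighbourhoods on both sides amount to prescribing $g|_{C_m}$ for some $m$); hence each $G^\circ_n$ is compact. The essential point, and the place where $\leq$-EPPA is used, is that $G^\circ$ is dense in $G$: given $g\in G$ and $n$, pick $m\geq n$ with $C_n\cup g(C_n)\subseteq C_m$; since automorphisms of $M$ preserve $\leq$ (using condition (3) of the definition of a smooth class together with persistence of universal formulas), $g|_{C_n}\colon C_n\to g(C_n)$ is a partial automorphism of $D^{(m)}$ with strong domain and range, so it extends to some $\sigma_{m+1}\in Aut(C_{m+1})$; iterating the witness property up the chain gives a coherent family $\sigma_{m+1}\subseteq\sigma_{m+2}\subseteq\cdots$ with $\sigma_j\in Aut(C_j)$, whose union is an $h\in G^\circ$ with $h|_{C_n}=g|_{C_n}$, i.e.\ $h\in gG_{(C_n)}$. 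As the stabilizers $G_{(C_n)}$ form a neighbourhood basis at the identity, density follows.

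To finish: given a continuous $G$-action on a nonempty compact Hausdorff $X$, fix $x_0\in X$ and put $\mu_n:=\int_{G^\circ_n}\delta_{g x_0}\,d\lambda_n(g)$, with $\lambda_n$ the Haar measure of the compact group $G^\circ_n$; then $\mu_n$ is $G^\circ_n$-invariant. Any weak-$*$ cluster point $\mu$ of $(\mu_n)$ is $G^\circ$-invariant, since each fixed $h\in G^\circ$ lies in $G^\circ_n$ for all large $n$ and therefore fixes $\mu_n$ eventually; and then $\mu$ is $G$-invariant by continuity of the action together with density of $G^\circ$. Hence $Aut(M)$ is amenable. The main obstacle is the density step, and specifically that $\leq$-EPPA is not assumed \emph{coherent}: the chosen extensions of partial automorphisms need not respect composition, so one cannot build a dense locally finite subgroup by taking a direct limit of the groups $Aut(C_k)$ inside $G$; passing instead to the profinite groups $G^\circ_n$ of \emph{all} coherent sequences of automorphisms sidesteps this, and the witnessing chain is precisely what makes those groups large enough to be dense. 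The remaining checks — preservation of $\leq$ under automorphisms, persistence of $\leq$ to intermediate finite substructures, that the $G_{(C_n)}$ form a neighbourhood basis of $1$, and the two soft facts about compact groups and dense subgroups — are routine and are carried out in \cite{kechris2006turbulenceamalgamationgenericautomorphisms,Evans_2019}.
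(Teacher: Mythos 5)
Your argument is correct, and it is exactly the standard Kechris--Rosendal compact-approximation argument (witnessing chain of EPPA extensions, the profinite groups of coherent automorphism sequences, density via $\leq$-EPPA, then averaging Haar measures), which is precisely the route the paper relies on: it states the proposition without proof, citing \cite{kechris2006turbulenceamalgamationgenericautomorphisms} and its extension to $\leq$-EPPA in \cite{Evans_2019}. Your adaptation of the smooth-class subtleties (downward persistence of the universal formulas defining $\leq$, isomorphism-invariance giving that automorphisms of $M$ preserve strongness) is handled correctly, so no gaps to report.
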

It is known by \cite{8d041ed3-5ed1-3e56-9a99-3f73c69d33f4} that Fraïssé classes with free amalgamation have EPPA. This result does not extend to smooth classes. The Shelah-Spencer class $(K_\alpha, \leq_\alpha)$ as defined in \ref{hrushovskigraph}, is a class with fAP. It was proven in \cite{ghadernezhad2016automorphismgroupsgenericstructures} and \cite{Evans_2019} that $(K_\alpha, \leq_\alpha)$ has neither the Ramsey property nor $\leq$-EPPA. In fact, they proved that the automorphism group is not amenable for any $\alpha\in (0,1)$. 

\bigskip
We also will make use of a definition of EPPA for permorphisms in the next few sections. These will be definitions originating from Herwig in \cite{Herwig1995ExtendingPI} with which we will use the results from \cite{hubička2022eppaclassesstrengtheningsherwiglascar}. 
\begin{definition}
    Let $\Gamma_{\mathcal{L}} = \{\chi_1,\dots, \chi_n\}$ be a finite group of permutations of the language $\mathcal{L}$ which preserve arity of the relations and fix equivalence. 

    Given an $\mathcal{L}$-structure $A$ and a bijective function $f:A\rightarrow A$, $f$ is a $\chi_i$-permorphism of $A$ if $A\models R(a_1,\dots, a_n) \Leftrightarrow A\models \chi_i(R)(f(a_1),\dots, f(a_n))$ for all relations $R\in \mathcal{L}$. 
    \end{definition}

    \begin{definition}
    A class $(K,\leq)$ has $\Gamma_{\mathcal{L}}$-$\leq$-EPPA if whenever $A\in K$ and $f_1,\dots, f_n$ are partial permorphisms of $A$ such that $f_i$ is a $\chi_i$-permorphism for some $\chi_i \in \Gamma_\mathcal{L}$ and $dom(f_i), range(f_i) \leq A$ for all $i$, then there exists some $B\in K$ such that $A\leq B$ and there exist full permorphisms $g_1,\dots, g_n$ of $B$ such that $g_i$ is a $\chi_i$-permorphism and $g_i \supseteq f_i$. 
\end{definition}

\subsection{Smooth Classes with $\leq$-EPPA}
In this section, we will prove a particular type of smooth class with fAP has $\leq$-EPPA. This will be a generalization on work done in both \cite{Evans_2019} and \cite{hubička2022eppaclassesstrengtheningsherwiglascar}.  We may also extend this to some classes of the same type which do not have fAP. To do this, we will use a nonstandard notion of structure described in \cite{hubička2022eppaclassesstrengtheningsherwiglascar}. 
\subsubsection{Extended Structures}
Because many of the current theorems that give EPPA results involve classes which are Fraïssé classes, or, at minimum, where the relation on the classes is $\subseteq$, they cannot be directly applied to smooth classes. A method of remedying this is to convert proper smooth classes to Fraïssé classes by adding partial functions which map points to sets. The newest EPPA results from \cite{hubička2022eppaclassesstrengtheningsherwiglascar} can handle this type of Fraïssé class and language. However, adding partial functions which map points to sets gives rise to non-standard model theoretic structures. For completeness, we will describe these structures. 
\begin{definition}
    Let $\mathcal{L}$ be a relational language. An \textit{extended language of $\mathcal{L}$} is a language $\mathcal{L}_{ext} = \mathcal{L} \cup \{U\} \cup \mathcal{L}_{fun}$, where $\mathcal{L}_{fun}$ is a set of function symbols and $U$ is a new unary predicate.  For a $\mathcal{L}_{ext}$-structure $\mathcal{A}$,  let  $U_\mathcal{A}$ denote the set of all elements of $\mathcal{A}$ satisfying $U$. A two-sorted $\mathcal{L}_{ext}$-structure $\mathcal{A}$ is an \textit{extended $\mathcal{L}_{ext}$-structure} if  the set $\mathcal{A}- U_\mathcal{A}$ is precisely the power set $\mathcal{P}(U_{\mathcal{A}})$;  for every relation $R\in \mathcal{L}$ of arity $n$, $R^\mathcal{A} \subseteq U_{\mathcal{A}}^n$; and for every function $f\in \mathcal{L}_{fun}$ of arity $n$, $f$ is a partial function such that $dom(f)\subseteq U^n_\mathcal{A}$ and $range(f) \subseteq \mathcal{P}(U_\mathcal{A})$.
\end{definition}
    \begin{definition}
    \deflab{extended}
        An \textit{extended homomorphism} $\Phi: \mathcal{A}\rightarrow \mathcal{B}$ between two extended $\mathcal{L}_{ext}$-structures $\mathcal{A}$ and $\mathcal{B}$ is a function $\Phi_U: U_{\mathcal{A}}\rightarrow U_{\mathcal{B}}$ such that
        for all relations $R\in \mathcal{L}$ $$(a_1,\dots, a_n) \in R^\mathcal{A} \Rightarrow (\Phi_U(a_1),\dots, \Phi_U(a_n))\in R^\mathcal{B}$$
        and for any function $f\in \mathcal{L}_{fun}$, we have that 
        $$f^\mathcal{A}(a_1,\dots, a_n) \subseteq f^\mathcal{B}(\Phi_U(a_1),\dots, \Phi_U(a_n))$$
        $\Phi$ is an \textit{extended embedding of $\mathcal{A}$ into $\mathcal{B}$} if for all $R\in \mathcal{L}$ (including equality) $$(a_1,\dots, a_n) \in R^\mathcal{A} \Leftrightarrow (\Phi_U(a_1),\dots, \Phi_U(a_n))\in R^\mathcal{B}$$
        and for any function $f\in \mathcal{L}_{fun}$, we have that 
        $$f^\mathcal{A}(a_1,\dots, a_n) = f^\mathcal{B}(\Phi_U(a_1),\dots, \Phi_U(a_n))$$
        Whenever an extended embedding $\Phi:\mathcal{A}\rightarrow \mathcal{B}$ is onto, $\Phi$ is an \textit{extended isomorphism} between $\mathcal{A}$ and $\mathcal{B}$. 
        An extended structure $\mathcal{A}$ is an \textit{extended  substructure} of the extended structure $\mathcal{B}$ if $U_{\mathcal{A}}\subseteq U_{\mathcal{B}}$ and the map $id:U_\mathcal{A}\rightarrow U_\mathcal{B}$ is an extended embedding between $\mathcal{A}$ and $\mathcal{B}$. Whenever $\mathcal{A}$ is an extended substructure of $\mathcal{B}$, we will write $\mathcal{A}\preceq \mathcal{B}$. An \textit{extended automorphism } of an extended structure $\mathcal{A}$ is an extended isomorphism $\Phi: \mathcal{A}\rightarrow \mathcal{A}$. An \textit{extended partial automorphism} of an extended structure $\mathcal{A}$ is an extended isomorphism $\Phi: \mathcal{C}\rightarrow \mathcal{D}$ such that $\mathcal{C},\mathcal{D} \preceq \mathcal{A}$.

    \end{definition}

\begin{definition}
Let $(\mathcal{K},\preceq)$ be a class of extended $\mathcal{L}_{ext}$-structures related by $\preceq$. 
\begin{enumerate}
    \item We say $\mathcal{K}$ has closure under extended substructure if $K$ is closed under $\preceq$.
    \item $(\mathcal{K},\preceq)$ is an \textit{disjoint amalgamation class} if $\mathcal{K}$ is closed under extended substructure, and, for any $\mathcal{A},\mathcal{B}, \mathcal{C}\in \mathcal{K}$ such that $U_\mathcal{A} = U_\mathcal{B}\cap U_\mathcal{C}$, $\mathcal{A}\preceq \mathcal{B}, \mathcal{C}$, there is some $\mathcal{D}\in \mathcal{K}$ such that $U_{\mathcal{D}} = U_\mathcal{C}\cup U_\mathcal{B}$ and $\mathcal{B}, \mathcal{A}, \mathcal{C}\preceq \mathcal{D}$.

    $(\mathcal{K},\preceq)$ is a \textit{free amalgamation class} if it is a disjoint amalgamation class and for any $\mathcal{A}$, $\mathcal{B}$, $\mathcal{C}\in \mathcal{K}$ as above, we can choose $\mathcal{D}\in \mathcal{K}$ such that for any tuple $\overline{d}$ which includes elements from both $U_\mathcal{B}-U_\mathcal{A}$ and $U_\mathcal{C}-U_\mathcal{A}$, then no relation $R\in \mathcal{L}$ holds on $\overline{d}$ in $\mathcal{D}$ and $\overline{d}$ is not in the domain of any function $f\in \mathcal{L}_{fun}$ in $\mathcal{D}$. 

\end{enumerate}
\end{definition}

We will now only work with finite permutation groups $\Gamma_{\mathcal{L}}$ on the extended language $\mathcal{L}^F$ which fix  $\mathcal{L}^F-\mathcal{L}$ and preserve equality, arity, and symbol type. The definitions of EPPA, and $\Gamma_\mathcal{L}$-EPPA from the previous section generalize to extended structures and extended partial automorphisms. In this context, we use the following main result from \cite{hubička2022eppaclassesstrengtheningsherwiglascar}:

\begin{theorem}
\thmlab{freeEppa}
Let $\mathcal{L}$ be a countable, relational language, and let $\mathcal{L}_{ext}$ be an extended language of $\mathcal{L}$ with only unary functions. Let $\Gamma_\mathcal{L}$ be a finite permutation group on $\mathcal{L}$, and therefore on $\mathcal{L}_{ext}$. If $(\mathcal{K},\preceq)$ is a free amalgamation class of extended $\mathcal{L}_{ext}$-structures, then $\mathcal{K}$ has $\Gamma_\mathcal{L}$-EPPA.
\end{theorem}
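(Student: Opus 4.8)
The plan is to treat \thmref{freeEppa} as a reduction to the main theorem of \cite{hubička2022eppaclassesstrengtheningsherwiglascar}: I would check that the extended‑structure vocabulary set up above is literally an instance of the relational‑language‑with‑unary‑functions vocabulary in which that paper works, so that its theorem applies, and then add a short \emph{completion} step to pass from the output of that theorem back to a genuine extended structure.

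First I would make the dictionary explicit. To an extended $\mathcal{L}_{ext}$‑structure $\mathcal{A}$ I associate the two‑sorted structure $\hat{\mathcal{A}}$ with sorts $U_{\mathcal{A}}$ and $P_{\mathcal{A}}:=\mathcal{P}(U_{\mathcal{A}})$, carrying the relations of $\mathcal{L}$ on the first sort and, for each $f\in\mathcal{L}_{fun}$, the partial unary function $a\mapsto f^{\mathcal{A}}(a)$ from $U_{\mathcal{A}}$ to $P_{\mathcal{A}}$; this is exactly the data of $\mathcal{A}$, only we no longer insist that $P_{\mathcal{A}}$ be the literal power set. I would then verify the routine facts that extended embeddings $\mathcal{A}\to\mathcal{B}$ correspond to embeddings $\hat{\mathcal{A}}\to\hat{\mathcal{B}}$ (such an embedding is forced on the part of $P_{\mathcal{A}}$ in the range of the functions and extended arbitrarily elsewhere), that $\mathcal{A}\preceq\mathcal{B}$ corresponds to $\hat{\mathcal{A}}$ being a substructure of $\hat{\mathcal{B}}$, that extended $\chi_i$‑permorphisms are exactly $\chi_i$‑permorphisms of $\hat{\mathcal{A}}$ (with $\Gamma_{\mathcal{L}}$ extended to fix the new symbols), and that $\mathcal{K}$ being a free amalgamation class of extended structures is the same as $\hat{\mathcal{K}}:=\{\hat{\mathcal{A}}:\mathcal{A}\in\mathcal{K}\}$ being a free amalgamation class in the sense of \cite{hubička2022eppaclassesstrengtheningsherwiglascar}; here the clause about tuples meeting both sides lying outside every function domain is vacuous because the functions are unary, and the ``no relations across the two sides'' clause transfers verbatim.

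With the dictionary in place the argument is short. Given $\mathcal{A}\in\mathcal{K}$ and extended partial $\chi_i$‑permorphisms $f_1,\dots,f_n$ with $\operatorname{dom}(f_i),\operatorname{range}(f_i)\preceq\mathcal{A}$, I pass to $\hat{\mathcal{A}}$ and the induced partial permorphisms of $\hat{\mathcal{A}}$, apply the main theorem of \cite{hubička2022eppaclassesstrengtheningsherwiglascar} to $\hat{\mathcal{K}}$ and $\Gamma_{\mathcal{L}}$, and obtain a finite $\hat{\mathcal{B}}_0\in\hat{\mathcal{K}}$ with $\hat{\mathcal{A}}$ a substructure of $\hat{\mathcal{B}}_0$ and full $\chi_i$‑permorphisms $g_1,\dots,g_n$ of $\hat{\mathcal{B}}_0$ extending the $f_i$. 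Finally I would complete: replace the second sort $P_{\mathcal{B}_0}$ by the actual power set $\mathcal{P}(U_{\mathcal{B}_0})$, identifying each named value $f^{\hat{\mathcal{B}}_0}(b)$ with the corresponding subset of $U_{\mathcal{B}_0}$, to obtain an extended $\mathcal{L}_{ext}$‑structure $\mathcal{B}$. Since the $g_i$ act only on $U_{\mathcal{B}_0}$ they are extended $\chi_i$‑permorphisms of $\mathcal{B}$, and $\mathcal{A}\preceq\mathcal{B}$; and $\mathcal{B}\in\mathcal{K}$ because the extra (``unnamed'') subsets added to the second sort carry no relation and are the value of no function, so their presence cannot create any configuration excluded by the free amalgamation class $\mathcal{K}$.

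I expect the genuine content to sit entirely in the completion step: one must know that the witness produced by \cite{hubička2022eppaclassesstrengtheningsherwiglascar} can be taken with honestly set‑valued functions (so that distinct function values are realized by distinct subsets and the passage back to $\mathcal{P}(U_{\mathcal{B}_0})$ is legitimate) and that enlarging the power‑set sort does not leave $\mathcal{K}$; both are plausible precisely because in the extended language the second sort carries no intrinsic structure, but they are where an actual argument rather than bookkeeping is needed. Alternatively one could bypass \cite{hubička2022eppaclassesstrengtheningsherwiglascar} and invoke the Herwig--Lascar theorem directly after rewriting a free amalgamation class as the class of finite extended structures omitting a fixed family of irreducible ones, but reusing \cite{hubička2022eppaclassesstrengtheningsherwiglascar} is cleaner since the permorphism, unary‑function, and infinitely‑many‑obstructions features are already built in there.
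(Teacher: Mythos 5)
Your proposal is essentially the paper's own justification: the paper gives no independent proof of this theorem, but states it as the main result of \cite{hubička2022eppaclassesstrengtheningsherwiglascar}, whose formalism of structures with set-valued unary functions the ``extended structures'' reproduce almost verbatim, so the content is exactly the dictionary you describe. Your worry about the completion step is unfounded but harmless: since the cited theorem already handles set-valued unary functions and the power-set sort carries no relations or function values of its own, passing back to a literal power set is pure bookkeeping.
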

Note that this theorem still applies to standard model-theoretic structures in relational languages, as in these languages, the notion of structures and an extended structure can be viewed identically. The novelty of extended structures is in the advent of functions which map points to sets.

\subsubsection{1-local smooth classes}
In this section, we will prove that 1-local (defined below)  classes have a "Fraïssé-ification" to a class of extended structures, generalizing the argument in \S 6 of \cite{Evans_2019}. We prove that 1-local smooth classes have $\leq$-EPPA and that $\leq$-EPPA is preserved in a number of merges of these classes. 

\begin{definition}
    \deflab{nlocal}
    A smooth class $(K,\leq)$ which is closed under substructure is $1$-local if 
    \begin{enumerate}
        \item $(K,\leq)$ has smooth intersections. From the property of smooth intersections, for any $A, B\in K$ with $A\subseteq B$ there is a unique smallest $C$ such that $A\subseteq C\leq B$. We define $cl_B(A) = C$. 
        \item Whenever $A, C\subseteq B$ are such that if $A,C\leq B$, then $A\cup C\leq B$. 
    \end{enumerate}
\end{definition}
Notice the above properties give us a characterization of the closure relation defined above:
\begin{proposition}
If $(K,\leq)$ is closed under substructure and has smooth intersections, then $(K,\leq)$ satisfies (2) in \defref{nlocal} if and only if for all $A, B\in K$ with $A\subseteq B$, $cl_B(A) = \bigcup_{a\in A} cl_B(\{a\})$.
\end{proposition}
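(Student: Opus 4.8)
The plan is to derive both directions directly from the fact that, under smooth intersections, $cl_B(A)$ is the \emph{smallest} $\leq$-substructure of $B$ containing $A$. First I would record why this is well-defined: if $A\subseteq C_1\leq B$ and $A\subseteq C_2\leq B$, then (since $K$ is closed under substructure, $C_1\cap C_2\in K$) smooth intersections gives $C_1\cap C_2\leq B\cap C_2=C_2$, and $C_2\leq B$, so $C_1\cap C_2\leq B$ by transitivity; hence the intersection of all such $C$ is itself of this form, and is the smallest. After that I use only the two formal properties $A\subseteq cl_B(A)\leq B$ and ``$A\subseteq C\leq B\ \Rightarrow\ cl_B(A)\subseteq C$''.

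For the forward direction, assume (2). The first step is to lift (2) from two sets to arbitrarily many by induction on their number, using $\emptyset\leq B$ from the smooth-class axioms for the base case: any union of finitely many $\leq$-substructures of $B$ is again a $\leq$-substructure of $B$. Now fix $A\subseteq B$ and put $D:=\bigcup_{a\in A}cl_B(\{a\})$. Then $D\leq B$ by the lifted version of (2), and $A\subseteq D$, so minimality of $cl_B(A)$ gives $cl_B(A)\subseteq D$; conversely, for each $a\in A$ we have $\{a\}\subseteq cl_B(A)\leq B$, hence $cl_B(\{a\})\subseteq cl_B(A)$, and taking the union over $a$ gives $D\subseteq cl_B(A)$. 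Thus $cl_B(A)=D$.

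For the backward direction, assume $cl_B(A)=\bigcup_{a\in A}cl_B(\{a\})$ for all $A\subseteq B$. The key observation is that this forces $cl_B(A)=A$ whenever $A\leq B$: for each $a\in A$, $A$ itself is a $\leq$-substructure of $B$ containing $\{a\}$, so $cl_B(\{a\})\subseteq A$, whence $cl_B(A)=\bigcup_{a\in A}cl_B(\{a\})\subseteq A\subseteq cl_B(A)$. So if $A,C\subseteq B$ with $A\leq B$ and $C\leq B$, then $cl_B(A\cup C)=\bigcup_{x\in A\cup C}cl_B(\{x\})=cl_B(A)\cup cl_B(C)=A\cup C$, and since $cl_B(A\cup C)\leq B$ by definition, we conclude $A\cup C\leq B$, which is exactly (2).

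I do not anticipate a real obstacle; the only points that need a line of care are the inductive lift of (2) to finite unions (together with the empty-union/empty-set edge case, covered by $\emptyset\leq B$) and being explicit that $cl_B(A)=A$ for $\leq$-substructures $A$. Once those are in place, each direction is a one-line manipulation of the ``smallest $\leq$-closed superset'' description of $cl_B$.
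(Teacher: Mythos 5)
Your proof is correct and follows essentially the same route as the paper: the forward direction uses condition (2) (lifted to finite unions) plus minimality of $cl_B$, and the backward direction applies the pointwise-closure hypothesis to $A\cup C$ together with the fact that closures of singletons from a $\leq$-substructure stay inside it. The extra material (re-deriving well-definedness of $cl_B$ from smooth intersections and spelling out the induction) is fine but not needed, since the paper builds the former into the definition of $cl_B$.
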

\begin{proof}
$(\Rightarrow):$ 
Let $A\subseteq B \in K$. First, by condition (2) in \defref{nlocal}, and by definition of $cl_B$, $\bigcup_{a\in A} cl_B(\{a\})\leq B$, and thus $cl_B(A) \subseteq \bigcup_{a\in A} cl_B(\{a\})$. Moreover, by definition of $cl_B$, $cl_B(\{a\}) \subseteq cl_B(A)$ for all $a\in A$. 

$(\Leftarrow):$ Suppose $A,C\leq B$. By assumption, we have that $cl_B(A\cup C) = \bigcup_{b\in A\cup C} cl_B(\{b\})$. But, note that $\bigcup_{b\in A\cup C} cl_B(\{b\}) \subseteq A\cup C$. This follows from the fact that since $A\leq B$, for every $b\in A$, $cl_B(\{b\}) = cl_A(\{b\})$. The same is true of $C$. Thus, $A\cup C \leq B$. 
\end{proof}
We also immediately get: 
\begin{proposition}
    If $(K_1,\leq_1)$ and $(K_2,\leq_2)$ are both 1-local, then $(K_1\circledast K_2,\leq_*)$ is 1-local.
\end{proposition}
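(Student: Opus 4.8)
The plan is to verify directly the two clauses of \defref{nlocal} for $(K^*,\leq_*)$, exploiting that the relation $\leq_*$, the reduct operations $\cdot|_{\mathcal{L}_i}$, and the set operations $\cap$ and $\cup$ on universes all commute in the expected way. Write $A_i := A|_{\mathcal{L}_i}$ for $A\in K^*$. The one elementary fact I will use repeatedly is that for $A,C\subseteq B$ in $K^*$ one has $(A\cap C)_i = A_i\cap C_i$ and $(A\cup C)_i = A_i\cup C_i$ as $\mathcal{L}_i$-structures, which holds because passing to a sublanguage does not change the underlying set and only forgets some relations.

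First I would record that $(K^*,\leq_*)$ is closed under substructure: if $A\subseteq B\in K^*$, then $A_i\subseteq B_i\in K_i$, so $A_i\in K_i$ by closure of $K_i$, hence $A\in K^*$. For smooth intersections, let $A\leq_* B$ and $C\in K^*$. Componentwise $A_i\leq_i B_i$, and smooth intersections for $(K_i,\leq_i)$ gives $A_i\cap C_i\leq_i B_i\cap C_i$, i.e. $(A\cap C)_i\leq_i (B\cap C)_i$ for every $i$; the structures $A\cap C$ and $B\cap C$ lie in $K^*$ by the closure just established, so $A\cap C\leq_* B\cap C$. Thus the closure operator $cl_B$ of $(K^*,\leq_*)$ is well defined, and moreover $(cl_B(A))_i = cl_{B_i}(A_i)$, since the right-hand coordinate structure contains $A_i$, is $\leq_i$-closed in $B_i$, and is $\subseteq$-minimal with these two properties.

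For clause (2) of \defref{nlocal}, suppose $A,C\subseteq B$ in $K^*$ with $A\leq_* B$ and $C\leq_* B$. Then $A_i\leq_i B_i$ and $C_i\leq_i B_i$ for each $i$, so $1$-locality of $(K_i,\leq_i)$ gives $A_i\cup C_i\leq_i B_i$, that is $(A\cup C)_i\leq_i B_i$ for all $i$, whence $A\cup C\leq_* B$. Both clauses of \defref{nlocal} now hold, so $(K^*,\leq_*)$ is $1$-local.

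I do not anticipate a genuine obstacle here: every requirement of $1$-locality is coordinatewise and $\leq_*$ is itself defined coordinatewise, so the entire content is the bookkeeping that the reduct maps commute with $\cap$, $\cup$, and $\leq_*$, which is immediate from the definitions. The only point deserving a sentence of care is the transfer of the closure operator, $(cl_B(A))_i = cl_{B_i}(A_i)$, but even this follows at once after smooth intersections for $(K^*,\leq_*)$ has been checked.
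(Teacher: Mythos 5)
Your main argument is correct and is exactly the immediate componentwise verification the paper has in mind (the paper states this proposition without proof): closure under substructure, smooth intersections, and the union clause of $1$-locality are all coordinatewise conditions, and $\leq_*$ is defined coordinatewise, so each transfers from $(K_1,\leq_1)$ and $(K_2,\leq_2)$ to the merge just as you write. One caveat: your side remark that $(cl_B(A))|_{\mathcal{L}_i} = cl_{B_i}(A_i)$ is false in general and its stated justification does not go through. The $\leq_*$-closure is the smallest set containing $A$ that is simultaneously $\leq_1$- and $\leq_2$-closed in $B$, which is obtained by iterating the two component closures and can properly contain each of them. For instance, take two copies of the $1$-local class from Example 3.18(1) in disjoint languages $\{E_1\}$ and $\{E_2\}$, and let $B$ have universe $\{a,b,c\}$ with only $E_1(a,b)$ and $E_2(b,c)$; then $cl_{B_1}(\{a\})=\{a,b\}$ while $cl_B(\{a\})=\{a,b,c\}$ in the merge. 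Fortunately this identity is never used: the proposition only requires the two clauses of the definition of $1$-locality, which you verify correctly, so the proof stands once that remark is deleted or weakened to the true inclusion $cl_{B_i}(A_i)\subseteq cl_B(A)$.
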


\begin{definition}
    Let $(K,\leq)$ be a 1-local smooth class in the language $\mathcal{L}$. Let $\Gamma_\mathcal{L}$ be a finite permutation group of $\mathcal{L}$. Call $\Gamma_\mathcal{L}$ \textit{closure-preserving on $K$} if whenever $A\in K$, $\gamma\in \Gamma_\mathcal{L}$ and $f$ is a partial $\gamma$-permorphism of $A$ with domain $D$ and range $R$ such that $D,R\leq A$, then for any $d\in D$, $f[cl_A(\{d\})] = cl_A(\{f(d)\})$.   
\end{definition}
\begin{lemma}
\label{triveppa}
For any 1-local smooth class $(K,\leq)$ in a relational language $\mathcal{L}$, $\Gamma_\mathcal{L} = \{id\}$ is closure-preserving on $K$.
\end{lemma}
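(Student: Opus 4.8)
The plan is to unwind the definitions. Since $\Gamma_{\mathcal{L}} = \{id\}$, a partial $id$-permorphism $f$ of $A\in K$ with domain $D$ and range $R$ is precisely an $\mathcal{L}$-isomorphism $f\colon D\to R$ between substructures of $A$, and the hypotheses to work with are $D\leq A$ and $R\leq A$. Fix $d\in D$ and set $C:=cl_A(\{d\})$. Since $d\in D$ and $D\leq A$, minimality of the closure gives $C\subseteq D$, and likewise $cl_A(\{f(d)\})\subseteq R$. The goal is $f[C]=cl_A(\{f(d)\})$. By symmetry — applying the entire argument to $f^{-1}\colon R\to D$, which is again a partial $id$-permorphism with $R,D\leq A$ — it suffices to establish one inclusion, and for that it is enough to prove $f[C]\leq A$: once we know this, $f(d)\in f[C]\leq A$ forces $cl_A(\{f(d)\})\subseteq f[C]$ by minimality of the closure.

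First I would record the elementary \emph{restriction} fact, valid in any smooth class: if $X\leq Z$ and $X\subseteq Y\subseteq Z$ in $K$, then $X\leq Y$. This is immediate from clause (2) of the definition of a smooth class, since $X\leq Z$ means $Z\models\Phi_{\overline{x}}(\overline{x})$ for an enumeration $\overline{x}$ of $X$, every formula in $\Phi_{\overline{x}}$ is universal, $\overline{x}\subseteq Y$, and universal formulas persist downward to substructures, so $Y\models\Phi_{\overline{x}}(\overline{x})$, i.e.\ $X\leq Y$. Applying this with $X=C$, $Y=D$, $Z=A$ yields $C\leq D$.

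Next, since $f|_C\colon C\to f[C]$ is an $\mathcal{L}$-isomorphism and $C, f[C]\in K$ by closure under substructure, clause (3) of the definition gives $\Phi_{\overline{c}}=\Phi_{f(\overline{c})}$, where $\overline{c}$ enumerates $C$ and $f(\overline{c})$ the corresponding enumeration of $f[C]$. From $C\leq D$ we have $D\models\Phi_{\overline{c}}(\overline{c})$; transporting along the isomorphism $f$ yields $R\models\Phi_{f(\overline{c})}(f(\overline{c}))$, that is, $f[C]\leq R$. Combining with $R\leq A$ and transitivity of $\leq$ (clause (1)) gives $f[C]\leq A$, which is exactly the fact we needed; symmetry then completes the argument, giving $f[cl_A(\{d\})]=cl_A(\{f(d)\})$.

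I do not expect a genuine obstacle here. The only point requiring a little care is the bookkeeping with enumerations when invoking clause (3): one must check that the isomorphism $f$ genuinely matches up the canonical families of defining universal formulas for $C$ and for $f[C]$ — but this is precisely what clause (3) is designed to guarantee. Everything else reduces to the downward persistence of universal formulas, transitivity of $\leq$, and minimality of the closure operator $cl_A$.
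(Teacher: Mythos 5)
Your proof is correct and follows essentially the same route as the paper's: establish $f[cl_A(\{d\})]\leq A$ by combining $cl_A(\{d\})\subseteq D$, the isomorphism, and transitivity, then use minimality of the closure, and get the reverse inclusion by applying the same argument to $f^{-1}$. Your write-up is in fact a bit more careful than the paper's at the step where $f[cl_A(\{d\})]\leq A$ is derived, spelling out the downward persistence of the universal formulas and the role of clause (3).
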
 
\begin{proof}
    Suppose $A\in K$ and $f$ is a partial automorphism of $A$ with domain $D$ and range $R$ such that $D, R\leq A$. Let $d\in D$. Note that $cl_D(\{d\}) = cl_A(\{d\})$ since $D\leq A$. Now, $f[cl_A(\{d\})] \leq A$ since $cl_A(\{d\})\leq A$ and $f$ is an isomorphism. By definition of $cl_A$, $cl_A(\{f(d)\})\subseteq f[cl_A(\{d\}]$. Moreover, $f^{-1}[cl_A(\{f(d)\})] \leq A$ and by definition, as $d\in f^{-1}[cl_A(\{f(d)\})]$, $cl_A(\{d\})\subseteq f^{-1}[cl_A(\{f(d)\})]$. Thus, $f[cl_A(\{d\})] \subseteq cl_A(\{f(d)\})$. Therefore, $f[cl_A(\{d\})] = cl_A(\{f(d)\})$.   
\end{proof}
We now give a construction used to prove the main result of this section. 
\begin{construction}
\label{construction}
    Let $(K,\leq)$ be a 1-local, smooth class in a relational language $\mathcal{L}$ with dAP. Define $\mathcal{L}^F = \mathcal{L}\cup \{U\}\cup \{f_m: m\geq 1\}$, where each $f_m$ is a unary function. We will consider extended $\mathcal{L}^F$- structures such that $f_k$ is unary and outputs a set of size $k$.  
    
    For each $A\in K$, define the extended expansion $\mathcal{A}^F$ of $A$ so that $U_{\mathcal{A}^F}$ is the universe of $A$, and for all $R\in \mathcal{L}$, and $(a_1,\dots,a_n) \in U_{\mathcal{A}^F}$, 
    $$\mathcal{A}^F\models R(a_1,\dots a_n)\Leftrightarrow A\models R(a_1,\dots, a_n)$$
    Moreover, for every $j\in \omega$, $dom(f^{\mathcal{A}^F}_j) = \{a\in U_{\mathcal{A}^F}: |cl_A(\{a\})| = j\}$ and for $a\in dom(f^{\mathcal{A}^F}_j)$, $f^{\mathcal{A}^F}_j(a) = cl_A(\{a\})$. 
Define $\mathcal{K}^F := \{\mathcal{A}^F: A\in K\}$, and consider the class $(K^F, \preceq)$, where $\preceq$ is the relation defined in \defref{extended}.
\begin{claim}
\label{preceqpreserve}
    $\mathcal{A}^F \preceq \mathcal{B}^F \Leftrightarrow A\leq B$
\end{claim}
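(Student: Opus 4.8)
The plan is to prove Claim~\ref{preceqpreserve} by unpacking the definition of $\preceq$ (from \defref{extended}) and the definition of $\mathcal{A}^F$ (from Construction~\ref{construction}), and then using the characterization of $cl$ as a \emph{local} closure operator that a 1-local class provides.

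First I would prove the forward direction $(\Rightarrow)$. Suppose $\mathcal{A}^F \preceq \mathcal{B}^F$. By definition of $\preceq$, $U_{\mathcal{A}^F} \subseteq U_{\mathcal{B}^F}$ (so $A \subseteq B$ as $\mathcal{L}$-structures, since the relations $R \in \mathcal{L}$ agree on $U_{\mathcal{A}^F}$), and the functions $f_j$ are \emph{exactly} preserved: for each $a \in U_{\mathcal{A}^F}$ and each $j$, $f_j^{\mathcal{A}^F}(a) = f_j^{\mathcal{B}^F}(a)$. Translating through the construction, this says $cl_A(\{a\}) = cl_B(\{a\})$ for every $a \in A$. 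By 1-locality (the proposition following \defref{nlocal}), $cl_B(A) = \bigcup_{a \in A} cl_B(\{a\}) = \bigcup_{a \in A} cl_A(\{a\}) \subseteq A$, and since $cl_B(A)$ is by definition the smallest $C$ with $A \subseteq C \leq B$, we get $cl_B(A) = A$, i.e. $A \leq B$.

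Next the reverse direction $(\Leftarrow)$. Suppose $A \leq B$. Then $A \subseteq B$, so $U_{\mathcal{A}^F} \subseteq U_{\mathcal{B}^F}$ and the relations $R \in \mathcal{L}$ restrict correctly (both $\mathcal{A}^F$ and $\mathcal{B}^F$ read their relations off $A$ and $B$, which agree on $A$). For the functions, I need $cl_A(\{a\}) = cl_B(\{a\})$ for each $a \in A$: since $A \leq B$ and $\{a\} \subseteq A \subseteq B$, the smallest strong substructure of $B$ containing $a$ lies inside $A$ (because $cl_B(\{a\}) \subseteq cl_B(A) = A$ using $A \leq B$), and a strong substructure of $A$ is strong in $B$ by transitivity of $\leq$, so $cl_A(\{a\}) = cl_B(\{a\})$; in particular they have the same size, so $a$ is in the domain of the same $f_j$ in both structures and the outputs coincide. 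Hence $id: U_{\mathcal{A}^F} \to U_{\mathcal{B}^F}$ is an extended embedding, i.e. $\mathcal{A}^F \preceq \mathcal{B}^F$.

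The one point requiring a little care — and the likely main obstacle — is the interplay between $cl_A$ and $cl_B$: one must check that $cl_B(\{a\}) \subseteq A$ when $A \leq B$, which is where transitivity of $\leq$ and the defining minimality of $cl$ get used, and that this closure computed inside $A$ agrees with the one computed inside $B$. Everything else is bookkeeping: the relational part of an extended embedding is automatic from $A \subseteq B$ (or $\subseteq$ in the other direction), since all relations in $\mathcal{L}^F \setminus \mathcal{L}$ are the function symbols already handled, and there are no relations on the power-set sort to worry about. I would also remark that smooth intersections (part of 1-locality) is exactly what guarantees $cl_B(\{a\})$ is well-defined, so the construction makes sense to begin with.
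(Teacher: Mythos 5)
Your proof is correct and follows essentially the same route as the paper: the forward direction uses 1-locality to write $cl_B(A)$ as the union of the point closures read off the functions $f_j$, and the backward direction reduces to showing $cl_A(\{a\}) = cl_B(\{a\})$ whenever $A \leq B$, a fact the paper simply asserts without proof. The only step you leave implicit---that $cl_B(\{a\}) \subseteq A$ together with $cl_B(\{a\}) \leq B$ gives $cl_B(\{a\}) \leq A$ (via smooth intersections, or directly from the universality of the formulas defining $\leq$), which is what lets the minimality of $cl_A(\{a\})$ close the argument---is handled no less carefully than in the paper's own treatment.
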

\begin{proof}
     If $\mathcal{A}^F$ and $\mathcal{B}^F$ are in $\mathcal{K}^F$ such that $\mathcal{A}^F\preceq \mathcal{B}^F$, then for $A = U_{\mathcal{A}^F}$ and $B =U_ {\mathcal{B}^F}$, we have $A\subseteq B$ as $\mathcal{L}$-structures and $A,B\in K$. Moreover, from the definitions of $\mathcal{A}^F$ and $\mathcal{B}^F$, $$cl_B(A) = \bigcup_{a\in A} cl_B(\{a\}) =  \bigcup_{a\in A}  f^{\mathcal{B}^F}_{|cl_B(\{a\})|}(a) = \bigcup_{a\in A} f^{\mathcal{A}^F}_{|cl_A(\{a\})|}(a)= \bigcup_{a\in A} cl_A(\{a\}) = cl_A(A) = A$$
    Thus, by definition, $A\leq B$.

    On the other hand, if $A, B \in K$ such that $A\leq B$, then $U_{\mathcal{A}^F} \subseteq U_{\mathcal{B}^F}$ and $cl_A(C) = cl_B(C)$ for any $C\subseteq A$. By definition, we get that $f_k^{\mathcal{A}^F} = f_k^{\mathcal{B}^F}$ for all $k\in \omega$. Thus, $\mathcal{A}\preceq \mathcal{B}$. 
\end{proof}
    
   \begin{claim}
       $(\mathcal{K}^F,\preceq)$ is a disjoint amalgamation class
   \end{claim}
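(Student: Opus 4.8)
The goal is to verify that $(\mathcal{K}^F, \preceq)$ is a disjoint amalgamation class, which by definition requires (i) closure under extended substructure, and (ii) the disjoint amalgamation property. For (i), I would take $\mathcal{A}^F \preceq \mathcal{B}^F$ with $\mathcal{B}^F \in \mathcal{K}^F$; by Claim~\ref{preceqpreserve} we get $A \leq B$ in $K$, hence $A \in K$ (as $K$ is closed under substructure, and $A \leq B \Rightarrow A \subseteq B$), so $\mathcal{A}^F \in \mathcal{K}^F$. The only subtlety is that an abstract extended substructure of $\mathcal{B}^F$ on an underlying set $A \subseteq U_{\mathcal{B}^F}$ is forced to be exactly $\mathcal{A}^F$ once we know its functions and relations are inherited; this is immediate from the definition of extended substructure together with the fact that the full power set $\mathcal{P}(A)$ is always present.

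**Main step: disjoint amalgamation.** Given $\mathcal{A}^F, \mathcal{B}^F, \mathcal{C}^F \in \mathcal{K}^F$ with $U_{\mathcal{A}^F} = U_{\mathcal{B}^F} \cap U_{\mathcal{C}^F}$ and $\mathcal{A}^F \preceq \mathcal{B}^F, \mathcal{C}^F$, I first translate via Claim~\ref{preceqpreserve} to $A \leq B$ and $A \leq C$ in $(K, \leq)$ with $A = B \cap C$. Since $(K,\leq)$ has dAP, there is $D \in K$ with universe $B \cup C$, a disjoint amalgam over $A$, with $A, B, C \leq_* D$ — here I would cite the hypothesis of Construction~\ref{construction} that $(K,\leq)$ has dAP. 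Set $\mathcal{D} := \mathcal{D}^F \in \mathcal{K}^F$. By Claim~\ref{preceqpreserve}, $A \leq B \leq D$ gives $\mathcal{B}^F \preceq \mathcal{D}^F$, and likewise $\mathcal{A}^F, \mathcal{C}^F \preceq \mathcal{D}^F$, and $U_{\mathcal{D}^F} = B \cup C = U_{\mathcal{B}^F} \cup U_{\mathcal{C}^F}$. That is exactly the required amalgam.

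**Anticipated obstacle.** The one place to be careful is the interaction between the function symbols $f_m$ and the amalgam: in $\mathcal{D}^F$ we have $f_m^{\mathcal{D}^F}(a) = cl_D(\{a\})$, and we need this to restrict correctly, i.e. that $cl_D(\{a\}) = cl_B(\{a\})$ for $a \in B$ and $cl_D(\{a\}) = cl_C(\{a\})$ for $a \in C$. This is precisely what $B \leq D$ and $C \leq D$ buy us — for a strong substructure the closure of a point computed in the small structure agrees with that computed in the large one — so the functions of $\mathcal{B}^F$ and $\mathcal{C}^F$ are genuinely inherited by $\mathcal{D}^F$, which is what $\mathcal{B}^F, \mathcal{C}^F \preceq \mathcal{D}^F$ demands. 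There is no freeness claim to check here (that is the content of the next claim, not this one), so once the closure-restriction point is pinned down the proof is essentially a translation through Claim~\ref{preceqpreserve}. I do not expect any serious difficulty beyond making that translation explicit.
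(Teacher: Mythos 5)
Your proof is correct and follows essentially the same route as the paper: both reduce the disjoint amalgamation step to dAP of $(K,\leq)$ by translating back and forth through the equivalence $\mathcal{A}^F\preceq\mathcal{B}^F\Leftrightarrow A\leq B$. The only point the paper treats more carefully than your ``immediate'' remark is closure under extended substructure: for an arbitrary $\mathcal{C}\preceq\mathcal{A}^F$ one notes that the values $f_k^{\mathcal{C}}(c)=cl_A(\{c\})$ must lie in $\mathcal{P}(U_{\mathcal{C}})$, so every point-closure is contained in $U_{\mathcal{C}}$, whence $1$-locality yields $C\leq A$ and hence $\mathcal{C}=\mathcal{C}^F$ with the closures computed in $C$ agreeing with those in $A$.
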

   Note that if $(K,\leq)$ has fAP, then the same proof below immediately shows $(K^F,\preceq)$ is a free amalgamation class, as all functions are unary.
    \begin{proof}
    Suppose that $\mathcal{C}\preceq \mathcal{A}^F$ where $\mathcal{A}^F \in \mathcal{K}^F$. We must show there exists some $D\in K$ such that $\mathcal{C} = \mathcal{D}^F$.  Clearly, $U_{\mathcal{C}}$ is an $\mathcal{L}$-substructure of $ U_{\mathcal{A}^F}$. Denote the $\mathcal{L}$-structure $U_{\mathcal{C}}$ as $C$. Let $A$ denote the element of $K$ corresponding to $\mathcal{A}^F$. As $(K,\leq)$ is closed under substructure, $C\in K$. By definition of $\preceq$, for all $c\in C$ with $|cl_A(\{c\})|=k$, $f_k^{\mathcal{A}^F}(c) = f_k^\mathcal{C}(c) = cl_A(\{c\}) \subseteq C$. This implies that $C\leq A$, so $cl_A(\{c\})= cl_C(\{c\})$. For any $n\neq k$, $f_n^{\mathcal{A}^F}$, and therefore $f_n^\mathcal{C}$, is undefined on $a$. By definition, we have that $\mathcal{C} = \mathcal{C}^F$, the extended structure corresponding to $C$. 

    We now show that $(K^F, \preceq)$ has disjoint amalgamation.
   Suppose $\mathcal{A}^F\preceq \mathcal{B}^F$ and $\mathcal{A}^F \preceq \mathcal{C}^F$ with $U_{\mathcal{B}^F} \cap U_{\mathcal{C}^F} = U_{\mathcal{A}^F}$. Then, by Claim \ref{preceqpreserve}, $A\leq B$ and $A\leq C$. As $(K,\leq)$ has dAP, there is some $D\in K$ for which $A, B, C\leq D$ and $D$ is a disjoint amalgam of $C$,$B$ over $A$. By Claim \ref{preceqpreserve}, $\mathcal{A}^F,\mathcal{B}^F, \mathcal{C}^F \preceq \mathcal{D}^F$, and $U_{\mathcal{D}^F}$ is indeed the amalgamation of $U_{\mathcal{B}^F}$ and $U_{\mathcal{C}^F}$ over $U_{\mathcal{A}^F}$. Thus, $(\mathcal{K}^F, \preceq)$ has the amalgamation property. 
    \end{proof}
\end{construction}
Construction \ref{construction} originated from (\cite{Evans_2019}. \cite{Evans_2021}). It is worth noting that any smooth class which satisfies (1) in \defref{nlocal} also corresponds to an extended class using a construction similar to Construction \ref{construction}, but the functions needed for that construction need not be unary. The main magic of 1-local classes is that their closure relations can be described point by point, and thus only require the addition of unary functions. The advantage of unary functions is that they preserve free amalgamation from the smooth class. This allow us to apply \thmref{freeEppa}.

\noindent We now prove the following:
\begin{proposition}
\label{convertEPPA}
    If $(K,\leq)$ is a $1$-local class with dAP in the relational language $\mathcal{L}$, then there is some extended amalgamation class $(\mathcal{K}^F, \preceq)$ in an extended language $\mathcal{L}^F$ of $ \mathcal{L}$ such that for any closure preserving permutation group $\Gamma_\mathcal{L}$ on $(K,\leq)$,  $(K,\leq)$ has $\Gamma_\mathcal{L}$-$\leq$-EPPA if and only if $(\mathcal{K}^F, \preceq)$ has $\Gamma_{\mathcal{L}}$-EPPA
\end{proposition}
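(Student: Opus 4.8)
The plan is to run everything through Construction~\ref{construction}: it already exhibits $(\mathcal{K}^F,\preceq)$ as a disjoint amalgamation class of extended $\mathcal{L}^F$-structures (and a free amalgamation class when $(K,\leq)$ has fAP), and Claim~\ref{preceqpreserve} shows $\preceq$ tracks $\leq$. So the real content is a dictionary between the two EPPA statements, resting on a single correspondence: for $A\in K$ and $\gamma\in\Gamma_\mathcal{L}$, the partial $\gamma$-permorphisms $f$ of $A$ with $dom(f),range(f)\leq A$ correspond bijectively to the extended partial $\gamma$-permorphisms of $\mathcal{A}^F$. Given such an $f$ with domain $D$ and range $R$ (so $D,R\in K$ by closure under substructure), I would check that $f$, viewed as a map on $U_{\mathcal{A}^F}=A$, is an extended isomorphism $\mathcal{D}^F\to\mathcal{R}^F$: it respects the $\gamma$-twisted relations of $\mathcal{L}$ by hypothesis; $\mathcal{D}^F,\mathcal{R}^F\preceq\mathcal{A}^F$ by Claim~\ref{preceqpreserve} since $D,R\leq A$; and it matches the unary functions $f_m$ on the nose precisely because $\Gamma_\mathcal{L}$ is closure-preserving: $f[cl_A(\{d\})]=cl_A(\{f(d)\})$ together with $cl_D(\{d\})=cl_A(\{d\})$ and $cl_R(\{f(d)\})=cl_A(\{f(d)\})$ (both from $D,R\leq A$) force $f_m^{\mathcal{D}^F}$ and $f_m^{\mathcal{R}^F}$ to be carried to one another by $f$ (note $f$ preserves the cardinality $|cl_A(\{a\})|$, so it also preserves which $f_m$ is defined at a point). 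Conversely, by \defref{extended} an extended partial $\gamma$-permorphism $\mathcal{F}\colon\mathcal{C}\to\mathcal{E}$ of $\mathcal{A}^F$ is determined by its restriction $f:=\mathcal{F}_U$ to the $U$-sort, which is a $\gamma$-permorphism of the $\mathcal{L}$-reduct with $dom(f)=U_{\mathcal{C}}$, $range(f)=U_{\mathcal{E}}$, and Claim~\ref{preceqpreserve} gives $dom(f),range(f)\leq A$ from $\mathcal{C},\mathcal{E}\preceq\mathcal{A}^F$.

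With the dictionary in hand, the forward implication goes: assuming $(K,\leq)$ has $\Gamma_\mathcal{L}$-$\leq$-EPPA, take $\mathcal{A}^F\in\mathcal{K}^F$ and extended partial $\gamma_i$-permorphisms $\mathcal{F}_1,\dots,\mathcal{F}_n$ of $\mathcal{A}^F$; pass to $f_i:=(\mathcal{F}_i)_U$, partial $\gamma_i$-permorphisms of $A$ with strong domains and ranges; apply $\Gamma_\mathcal{L}$-$\leq$-EPPA to get $B\in K$ with $A\leq B$ and full $\gamma_i$-permorphisms $g_i\supseteq f_i$ of $B$. Each $g_i$ is in particular a partial $\gamma_i$-permorphism of $B$ with domain and range $B\leq B$, so the closure-preserving hypothesis applies and yields $g_i[cl_B(\{b\})]=cl_B(\{g_i(b)\})$ for all $b$; hence $g_i$ lifts to an extended $\gamma_i$-permorphism $\mathcal{G}_i$ of $\mathcal{B}^F$, while $\mathcal{B}^F\succeq\mathcal{A}^F$ by Claim~\ref{preceqpreserve}. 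Since extended (partial) permorphisms are determined by their action on the $U$-sort and $g_i\supseteq f_i$, we get $\mathcal{G}_i\supseteq\mathcal{F}_i$, so $(\mathcal{K}^F,\preceq)$ has $\Gamma_\mathcal{L}$-EPPA. The reverse implication is the same argument read backwards: from $A\in K$ and partial $\gamma_i$-permorphisms $f_i$ of $A$ with $dom(f_i),range(f_i)\leq A$, lift via the dictionary to extended partial $\gamma_i$-permorphisms $\mathcal{F}_i$ of $\mathcal{A}^F$, apply $\Gamma_\mathcal{L}$-EPPA in $(\mathcal{K}^F,\preceq)$ to obtain $\mathcal{B}^F\succeq\mathcal{A}^F$ with extended $\gamma_i$-permorphisms $\mathcal{G}_i\supseteq\mathcal{F}_i$, and read off $B\in K$, $A\leq B$ (Claim~\ref{preceqpreserve}), and $g_i:=(\mathcal{G}_i)_U\supseteq f_i$, which are full $\gamma_i$-permorphisms of $B$ because $\mathcal{G}_i$ preserves the $\gamma_i$-twisted $\mathcal{L}$-relations.

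The step I expect to need the most care is the correspondence itself — in particular, verifying that a partial $\gamma$-permorphism with strong domain and range matches the functions $f_m$ exactly, which is where the closure-preserving hypothesis on $\Gamma_\mathcal{L}$ is genuinely used, together with the identity $cl_D(\{d\})=cl_A(\{d\})$ for $D\leq A$ and the bookkeeping that $f$ preserves $|cl_A(\{a\})|$. A secondary point worth making explicit is that a full permorphism of $B\in K$ is automatically closure-preserving, since it is a partial permorphism of $B$ with domain and range all of $B$, so the hypothesis on $\Gamma_\mathcal{L}$ already applies to it; this is exactly what lets the lifted extended permorphism $\mathcal{G}_i$ exist in the forward direction. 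Everything else is immediate from Construction~\ref{construction} (every element of $\mathcal{K}^F$ is some $\mathcal{B}^F$ with $B\in K$) and Claim~\ref{preceqpreserve}.
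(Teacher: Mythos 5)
Your proposal is correct and follows essentially the same route as the paper's proof: pass through Construction~\ref{construction}, use Claim~\ref{preceqpreserve} to identify strong substructures with extended substructures, and invoke the closure-preserving hypothesis on $\Gamma_\mathcal{L}$ exactly where the paper does, namely to show that a (full or partial) permorphism respects the unary functions $f_m$. The only difference is one of exposition — you spell out the dictionary and the reverse implication that the paper dismisses with ``the other direction follows similarly,'' including the useful observation that a full permorphism of $B$ is automatically closure-preserving because its domain and range are $B\leq B$.
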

\begin{proof}
Let $(\mathcal{K}^F,\preceq)$ be the extended smooth class corresponding to $(K,\leq)$ given by Construction \ref{construction}. 
Assume $(K,\leq)$ has $\Gamma_\mathcal{L}$-$\leq$-EPPA. Note that $\Gamma_\mathcal{L}$ only permutes elements of $\mathcal{L}$, so we may consider it to be a permutation group on $\mathcal{L}^F$. We now prove that $(\mathcal{K}^F,\preceq)$ has $\Gamma_\mathcal{L}$-EPPA. Let $\mathcal{A}^F$ be in $(\mathcal{K}^F, \preceq)$ and $\gamma_1,\dots, \gamma_n\in \Gamma_\mathcal{L}$. Suppose for $i\leq n$, $f_i$ is an extended partial $\gamma_i$-permorphism of $\mathcal{A}^F$. We want to find some $\mathcal{B}^F \in \mathcal{K}^F$ such that for $i\leq n$, there exists an extended $\gamma_i$-permorphism $g_i$ of $\mathcal{B}^F$ such that $g_i\supset f_i$. 

We may consider each extended $\gamma_i$-permorphism $f_i$ as a $\gamma_i$-permorphism $f_i$ of $A$ in the language $\mathcal{L}$, as $dom(f_i), range(f_i)\leq A$ by the construction of $\mathcal{A}^F$. As $(K,\leq)$ has $\Gamma_\mathcal{L}$-EPPA, there is some $B\in K$ such that $A\leq B$ and for $i\leq n$, there exists some $\gamma_i$-permorphism $g_i$ of $B$ such that $g_i\supset \phi_i$. By construction, $\mathcal{A}^F\preceq \mathcal{B}^F$. We now show that each $g_i$ is in fact an extended $\gamma_i$-permorphism of $\mathcal{B}^F$. 
 
 As $g_i$ is a $\gamma_i$-permorphism of $B$, we need only check that $g_i$ preserves the functions of $\mathcal{B}^F$. Let $f_k\in \mathcal{L}^F$ and suppose $a\in \mathcal{B}^F$ is defined on $f_k$. Then, we have $g_i(f_k(a)) = g_i(cl_B(\{a\}))$. As $g_i$ is a $\gamma_i$-permorphism and $\Gamma_\mathcal{L}$ is closure-preserving, $g_i(cl_B(\{a\})) = cl_B(\{g_i(a)\}) = f_k(g_i(a))$. This shows $g_i$ preserves the functions of $\mathcal{B}^F$. Thus, $(\mathcal{K}^F, \preceq)$ has $\Gamma_\mathcal{L}$-EPPA. 
 
 \bigskip
 \noindent The other direction follows similarly. 
\end{proof}

\begin{example}
\label{1localex}
We now give examples of classes which are $1$-local
    \begin{enumerate}
        \item $(K_1,\leq_1)$ where $K_1$ is the set of all graphs in the language $\mathcal{L}_1 :=\{E\}$, and $A\leq_1 B$ if and only if for all $b\in B-A$, $\forall a\in A (\neg E(a,b))$. This class is 1-local. 
        \item $(K_2,\leq_2)$ where $K_2$ is the set of all graphs in the language $\mathcal{L}_2 :=\{E\}$, and $A\leq_2 B$ if and only if for all $b\in B-A$, $\forall a\in A ( E(a,b))$. This class is 1-local. 
        \item $(K_3,\leq_3)$ where $K_3$ is the set of all graphs in the language $\mathcal{L}_3 :=\{R\}$, with $R$ 3-ary, and $A\leq_3 B$ if and only if for all $b_1\in B-A$, $\forall b_2\in B$ $\forall a_1\in A (\neg R(a_1,b_1,b_2))$. This class is 1-local. 
        \item Any merge of $1$-local classes is $1$-local. 
        \item All Fraïssé classes are trivially $1$-local. 
    \end{enumerate}
\end{example}

Thus we can prove:
\begin{proposition}
\thmlab{1local}
    If  a class $(K, \leq)$ is $1$-local and has fAP, then $K$ has $\leq$-EPPA.
\end{proposition}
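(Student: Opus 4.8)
The plan is to route through the ``Fraïssé-ification'' of Construction~\ref{construction} and the transfer result Proposition~\ref{convertEPPA}, so that all the real work is done by \thmref{freeEppa}. First observe that fAP is a special case of dAP, since the free amalgam $B*_A C$ is in particular a disjoint amalgam; and a $1$-local class is, by \defref{nlocal}, closed under substructure and has smooth intersections. Hence every hypothesis of Construction~\ref{construction} (and of Proposition~\ref{convertEPPA}) is met, and Construction~\ref{construction} produces the extended class $(\mathcal{K}^F,\preceq)$ in the extended language $\mathcal{L}^F=\mathcal{L}\cup\{U\}\cup\{f_m:m\geq 1\}$, in which \emph{every} function symbol is unary.

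Next I would check that $(\mathcal{K}^F,\preceq)$ is not merely a disjoint amalgamation class but a \emph{free} amalgamation class of extended structures; this is exactly the remark recorded just after the second claim of Construction~\ref{construction}. Concretely, suppose $\mathcal{A}^F\preceq\mathcal{B}^F$ and $\mathcal{A}^F\preceq\mathcal{C}^F$ with $U_{\mathcal{B}^F}\cap U_{\mathcal{C}^F}=U_{\mathcal{A}^F}$. By Claim~\ref{preceqpreserve} this gives $A\leq B$ and $A\leq C$ in $K$; fAP then yields $D:=B*_A C\in K$ with $A,B,C\leq D$; and Claim~\ref{preceqpreserve} again gives $\mathcal{A}^F,\mathcal{B}^F,\mathcal{C}^F\preceq\mathcal{D}^F$ with $U_{\mathcal{D}^F}=U_{\mathcal{B}^F}\cup U_{\mathcal{C}^F}$. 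The freeness clause is immediate: by the definition of $B*_A C$, no relation of $\mathcal{L}$ holds in $D$ on a tuple meeting both $B-A$ and $C-A$, and the clause about functions is vacuous precisely because all symbols in $\mathcal{L}^F$ are unary, so no length-one tuple can involve elements of both sides. Thus $(\mathcal{K}^F,\preceq)$ is a free amalgamation class of extended $\mathcal{L}^F$-structures whose functions are all unary.

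Now apply \thmref{freeEppa} with the trivial permutation group $\Gamma_\mathcal{L}=\{\mathrm{id}\}$ (a finite permutation group on $\mathcal{L}$, hence on $\mathcal{L}^F$): $(\mathcal{K}^F,\preceq)$ has $\{\mathrm{id}\}$-EPPA, i.e.\ ordinary EPPA for extended structures. Finally, transfer back: by Lemma~\ref{triveppa}, $\{\mathrm{id}\}$ is closure-preserving on $K$, so Proposition~\ref{convertEPPA} applies in the direction ``$(\mathcal{K}^F,\preceq)$ has $\Gamma_\mathcal{L}$-EPPA $\Rightarrow (K,\leq)$ has $\Gamma_\mathcal{L}$-$\leq$-EPPA'', yielding that $(K,\leq)$ has $\{\mathrm{id}\}$-$\leq$-EPPA, which is exactly $\leq$-EPPA. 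The only step that calls for genuine care is the upgrade to a \emph{free} amalgamation class in the second paragraph: this is where $1$-locality earns its keep, since it is what forced the closure functions in Construction~\ref{construction} to be unary, and unariness is exactly what makes the function clause of the free-amalgamation condition vacuous; past that point the argument is a direct chaining of Claim~\ref{preceqpreserve}, Lemma~\ref{triveppa}, \thmref{freeEppa}, and Proposition~\ref{convertEPPA}.
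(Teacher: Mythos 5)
Your proof is correct and follows essentially the same route as the paper: convert $(K,\leq)$ to the extended class $(\mathcal{K}^F,\preceq)$ via Construction~\ref{construction}, note that fAP plus unariness of the closure functions makes it a free amalgamation class (the remark recorded in the construction), apply \thmref{freeEppa}, and transfer back with Lemma~\ref{triveppa} and Proposition~\ref{convertEPPA}. You merely spell out the free-amalgamation upgrade in more detail than the paper does, which is a welcome but not divergent elaboration.
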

\begin{proof}
    Putting together Lemma \ref{triveppa} and Proposition \ref{convertEPPA}, $(K, \leq)$ has $\leq$-EPPA if and only if $(\mathcal{K}^F, \preceq)$ has EPPA. Since $(K,\leq)$ has fAP, then $(\mathcal{K}^F, \preceq)$ will have free amalgamation as an extended amalgamation class. By \thmref{freeEppa}, $(\mathcal{K}^F,\preceq)$ has EPPA, and therefore, $(K,\leq)$ has $\leq$-EPPA. 
 \end{proof}
This, in particular, shows that merges of 1-local classes which have fAP will also have $\leq$-EPPA. Let $(K_{LO},\subseteq)$ once again denote the Fraïssé class of all finite linear orders. We now use the theorem given by \cite{Evans_2021} tailored to extended classes to prove the following:
\begin{corollary}
If $(K,\leq)$ is 1-local with free amalgamation, then $(K\circledast K_{LO}, \leq)$ has the Ramsey property. 
\end{corollary}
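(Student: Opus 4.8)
The plan is to Fraïssé-ify $(K,\leq)$ as in Construction \ref{construction} and then transport the Ramsey property back along the functor $A\mapsto\mathcal{A}^F$. Since $(K,\leq)$ is $1$-local with fAP, the remark accompanying the amalgamation claim in Construction \ref{construction} gives that $(\mathcal{K}^F,\preceq)$ is a \emph{free} amalgamation class of extended $\mathcal{L}^F$-structures (freeness survives precisely because every $f_m$ is unary). I would then form the merge $\mathcal{K}^F\circledast K_{LO}$ with the Fraïssé class of finite linear orders -- that is, the class of extended $(\mathcal{L}^F\cup\{<\})$-structures whose $\mathcal{L}^F$-reduct lies in $\mathcal{K}^F$ and on which $<$ linearly orders the point sort $U$, equipped with the induced relation $\preceq$. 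The extended-structure version of the classical ordering theorem (cf.\ \cite{NESETRIL1983183}) supplied by \cite{Evans_2021} then yields that $\mathcal{K}^F\circledast K_{LO}$ has the Ramsey property; its members are rigid (being linearly ordered), so the structure-colouring formulation used here agrees with the embedding-colouring one.

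Next I would set up the transport. Isomorphisms between members of $K$ preserve point-closures -- by clause (3) of the definition of a smooth class together with $1$-locality -- so an $\mathcal{L}$-isomorphism $A\to C$ lifts to, and conversely restricts from, an extended isomorphism $\mathcal{A}^F\to\mathcal{C}^F$; hence $A\mapsto\mathcal{A}^F$ is a bijection between the isomorphism types of $K$ and of $\mathcal{K}^F$, and, keeping the same linear order on $U$, it extends to a bijection between $K\circledast K_{LO}$ and $\mathcal{K}^F\circledast K_{LO}$, which is onto since $\mathcal{K}^F=\{\mathcal{A}^F:A\in K\}$. By Claim \ref{preceqpreserve}, together with the fact that $\subseteq$ between finite linear orders just means ``induced order'', this bijection satisfies $A\leq_*B\iff\mathcal{A}^F\preceq\mathcal{B}^F$; combining this with the part of Construction \ref{construction} which shows that every $\preceq$-substructure of $\mathcal{B}^F$ has the form $\mathcal{C}^F$ for some $C\leq B$, it induces, for each $A\leq_*B$ in $K\circledast K_{LO}$, a bijection $\binom{B}{A}\to\binom{\mathcal{B}^F}{\mathcal{A}^F}$ (and similarly with any $B'\leq_*B$ in place of $B$, compatibly). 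So, given $A\leq_*B$ in $K\circledast K_{LO}$ and a colouring $c:\binom{B}{A}\to\{0,1\}$, I transport $c$ to $\binom{\mathcal{B}^F}{\mathcal{A}^F}$, apply the Ramsey property of $\mathcal{K}^F\circledast K_{LO}$ to obtain some $\mathcal{C}$ with $\mathcal{B}^F\preceq\mathcal{C}$ inside which some copy of $\mathcal{B}^F$ is monochromatic for the transported colouring on its $\mathcal{A}^F$-copies, pull $\mathcal{C}$ back to a member $C\in K\circledast K_{LO}$ with $B\leq_*C$, and pull the monochromatic copy back; then $C$ witnesses the Ramsey property for $A$ and $B$.

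The main obstacle is the black box invoked above: making precise the extended-class version of the ordering theorem of \cite{Evans_2021} and verifying that it applies to $(\mathcal{K}^F,\preceq)$ -- in particular, that adjoining a linear order to a free amalgamation class of extended structures, whose members carry partial functions from points to subsets of $U$, still produces a Ramsey class. Granting that, everything else is bookkeeping along $A\mapsto\mathcal{A}^F$, entirely parallel to the proofs of Proposition \ref{convertEPPA} and \thmref{1local}.
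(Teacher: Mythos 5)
Your proposal follows essentially the same route as the paper's proof: convert $(K,\leq)$ to the extended free amalgamation class $(\mathcal{K}^F,\preceq)$ via Construction \ref{construction}, invoke Theorem 1.3 of \cite{Evans_2021} to get the Ramsey property for $\mathcal{K}^F\circledast K_{LO}$, and transfer it back to $(K\circledast K_{LO},\leq_*)$ using Claim \ref{preceqpreserve}. The transport bookkeeping you spell out, and the reliance on the extended-structure version of the ordering theorem that you flag as a black box, are exactly what the paper's brief proof leaves implicit, so your argument is correct and matches it.
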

\begin{proof}
    This follows by converting $(K,\leq)$ to $(\mathcal{K}^F, \preceq)$,  which has fAP and therefore $(\mathcal{K}^F\circledast K_{LO}, \preceq)$ has the Ramsey property by Theorem 1.3 of \cite{Evans_2021}. It will then follow that $(K\circledast K_{LO}, \leq)$ will also have the Ramsey property. 
\end{proof}
These notions of 1-local classes come as a generalization of the arguments given in (\cite{Evans_2019}, \cite{Evans_2021}, \cite{hubička2022eppaclassesstrengtheningsherwiglascar}) that the smooth class of $k$-orientations under successor closures has $\leq$-EPPA. 1-local classes all seem to have a similar definition of the relation $\leq$, which accounts for their closure relations depending on individual points.

\begin{remark}
\label{complement}
It is also possible to apply the arguments above to show that certain 1-local classes which do not have fAP still have $\leq$-EPPA. In particular, the class defined in (2) of Example \ref{1localex} has $\leq$-EPPA, although it certainly does not have fAP. This is done by looking at \textit{complements} of smooth classes. Let $\mathcal{L}$ be a relational language. For an $\mathcal{L}$-structure $A$, let $A^\sim$ denote the structure with the same universe as $A$ such that for every relation $R$, and $\overline{a}\in A^{lg(R)}$, $A\models R(\overline{a})\Leftrightarrow A^\sim \models \neg R(\overline{a})$. For a smooth class $(K,\leq)$, the \textit{complement of $(K,\leq)$} is the class $(K^\sim, \leq_\sim)$ such that $K^\sim = \{A^\sim: A\in K\}$ and $A^\sim \leq_\sim B^\sim \Leftrightarrow A\leq B$. It is easy to see that $(K,\leq)$ is 1-local if and only if $(K^\sim, \leq_\sim)$ is 1-local.  It is also immediate that $(K,\leq)$ has $\leq$-EPPA if and only if $(K^\sim, \leq_\sim)$ has $\leq_\sim$-EPPA. Moreover, if $(K_1,\leq_1)$ is a smooth class, then $(K\circledast K_1,\leq_*)$ has $\leq_*$-EPPA if and only if the merged class $(K^\sim\circledast K_1, \leq_{*})$ has $\leq_{*}$-EPPA.

\end{remark}
\subsubsection{Classes without EPPA via 1-local Smooth Classes}
Studying smooth classes in the context of EPPA properties can also give some results for EPPA properties of true Fraïssé classes. 
\begin{definition}
    Let $\mathcal{L}$ be a countable, relational language. For $F$ a family of finite $\mathcal{L}$-structures, define $Forb_m(F)$ as the class of all $\mathcal{L}$-structures $A$ so that there does not exist a $B\in F$ and a 1-1 mapping $f: B\rightarrow A$ such that for all $R\in \mathcal{L} $, $B\models R(b_1,\dots, b_n) \Rightarrow A \models R(f(b_1),\dots, f(b_n))$. Define $Forb_e(F)$ as the class of all $\mathcal{L}$-structures $A$ so that there does not exist a $B\in F$ and an $\mathcal{L}$-embedding $f: B\rightarrow A$. 

    We say a $\mathcal{L}$-structure is \textit{irreducible} if it cannot be written as a free amalgamation of two of its proper subsets. Let $Forb_{he}(F)$ be the class of all $\mathcal{L}$-structures $A$ so that there does not exist a $B\in F$ and a mapping $f: B\rightarrow A$ such that $f$ is injective on irreducible subsets $B'\subseteq B$ and for all $R\in \mathcal{L} $, $B\models R(b_1,\dots, b_n) \Rightarrow A \models R(f(b_1),\dots, f(b_n))$. 
\end{definition}
It is known by \cite{8d041ed3-5ed1-3e56-9a99-3f73c69d33f4} that if $Forb_e(F)$ is a Fraïssé class with fAP, then $Forb_e(F)$ has EPPA. 

Suppose that $F$ is finite and for every  $A\in Forb_{he}(F)$, there is some infinite $\mathcal{L}$-structure $M$ such that $A\subseteq M$ and every partial automorphism of $A$ extends to an automorphism of M, and, moreover, for every $B\subseteq M$, $B\in Forb_{he}(F)$. By \cite{hubička2022eppaclassesstrengtheningsherwiglascar}, $Forb_{he}(F)$ has EPPA. 

\bigskip
Using examples of smooth classes, we can give a proof using a 1-local class that the assumptions above cannot be dropped completely. 

\begin{proposition}
    There exists a finite, relational language $\mathcal{L}$ and a finite family $F$ of $\mathcal{L}$-structures for which $Forb_e(F)$, $Forb_{he}(F)$, and $Forb_m(F)$ do not have EPPA. 
\end{proposition}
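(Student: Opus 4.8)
The plan is to produce $F$ by recording the closure operator of a suitable $1$-local smooth class as an explicit relation, and to exploit the gap between $\le$-EPPA (which that smooth class enjoys) and ordinary EPPA (which the resulting relational class does not). The guiding point is that a closure operator is always a reflexive transitive relation, and a relation of that shape forces unbounded ascending chains, so it cannot be reconciled with finiteness once one allows partial automorphisms that do not respect the smooth relation.

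Concretely, I would take a $1$-local smooth class $(K,\le)$ with fAP whose closure operator is a nontrivial preorder --- for instance a class of ``successor-closed'' orientations as in Example~\ref{1localex}, cut down so that the successor set, hence the closure, of each point is bounded. Such a class has fAP, so by \thmref{1local} it has $\le$-EPPA. Now let $\mathcal L=\{<\}$ and let $F$ be the finite family consisting of the one-point loop, the (loopless) $2$-cycle, and the finitely many three-element configurations $x<y,\ y<z,\ \neg(x<z)$ witnessing failure of transitivity (together, if one wants the tight link to the closure-expanded class, with the finitely many configurations excluded by $K$ itself and the finitely many witnesses of incompatibility between $<$ and the underlying relations). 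Since irreflexivity and transitivity are conditions on at most three elements at a time, $\mathrm{Forb}_e(F)$ is exactly the class of finite strict partial orders; this is the same ``forced infinite chain'' phenomenon that drives \thmref{baldwinminpair}, and the novelty is only that the $1$-local smooth class whose closure preorder this is does have $\le$-EPPA, so it is the passage to plain substructure that destroys EPPA.

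Next I would show $\mathrm{Forb}_e(F)$ fails EPPA. Let $A$ be the two-point chain $a<b$ and let $f\colon\{a\}\to\{b\}$ be the partial automorphism $a\mapsto b$, whose domain and range are the one-point substructures. If $g$ were an automorphism of some finite $B\supseteq A$ in $\mathrm{Forb}_e(F)$ with $g\supseteq f$, then $g(a)=b$; since $\{a,b\}$ is an induced substructure we have $a<_B b$ strictly, and applying $g$ repeatedly yields $b<_B g(b)<_B g^2(b)<_B\cdots$ with all terms distinct, because any coincidence or back-edge would create a loop by transitivity, which is forbidden. This infinite strictly ascending chain contradicts finiteness of $B$, so $f$ does not extend.

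Finally I would transfer this to $\mathrm{Forb}_{he}(F)$ and $\mathrm{Forb}_m(F)$. These are strictly smaller than $\mathrm{Forb}_e(F)$ because the transitivity triples are not irreducible (each is the free amalgam of its two edges over the midpoint). One checks that $\mathrm{Forb}_m(F)$ excludes every injective occurrence of a $3$-chain $a<b<c$, while $\mathrm{Forb}_{he}(F)$ excludes every occurrence of $a<b<c$ with $b$ distinct from $a$ and from $c$ (the non-injective homomorphisms of a triple that identify its two endpoints only produce $2$-cycles, which are already forbidden). The single directed edge $a<b$ still lies in both classes, and extending $f\colon a\mapsto b$ to an automorphism $g$ of any $B$ in the class forces $a<_B b<_B g(b)$ with $a,b,g(b)$ pairwise distinct (as $\{a,b\}$ is induced, no new relation $b<_B a$ holds), which is excluded; hence none of $\mathrm{Forb}_e(F),\mathrm{Forb}_{he}(F),\mathrm{Forb}_m(F)$ has EPPA. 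The main obstacle is this last step: correctly pinning down which configurations $\mathrm{Forb}_{he}$ and $\mathrm{Forb}_m$ forbid when $F$ contains non-irreducible structures (tracking all degenerate homomorphic images) and checking the witness survives in each; a secondary technical point, needed only for the explicit connection to a $1$-local smooth class with fAP, is verifying that the closure-expanded class is genuinely finitely constrained and that every finite preorder arises as a restricted closure relation there.
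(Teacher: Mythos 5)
Your construction is correct, but it proves the proposition by a genuinely different route from the paper. The paper stays inside its smooth-class machinery: it takes the $1$-local class of graphs with $A\leq B$ iff no edge joins $B-A$ to $A$, fixes a structure $A$ containing an edge $E(a_1,a_2)$ and an isolated point $a_3$, observes that the partial map $a_1\mapsto a_3$ can never extend to an automorphism of any $B$ with $A\leq B$, and then adds a second binary relation $R$ holding on all pairs of $A$ as a marker, taking $F^{\#}$ to be the one-point extensions of the marked structure that violate $\leq$; inside any structure of the resulting $\mathrm{Forb}$-classes every copy of the marked $A$ is forced to sit $\leq$-strongly, so an EPPA witness would contradict the observation. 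You instead take $\mathcal{L}=\{<\}$, let $F$ be (all isomorphism types of) the at-most-three-element structures that are not strict partial orders, so that $\mathrm{Forb}_e(F)$ is exactly the class of finite strict posets, and run the classical shift argument on the partial map $a\mapsto b$ inside the two-chain: a finite-order automorphism extending it yields $a<a$ by transitivity; since $\mathrm{Forb}_{he}(F)\subseteq \mathrm{Forb}_m(F)\subseteq \mathrm{Forb}_e(F)$ and the two-chain lies in all three (no member of $F$ admits any homomorphism into a single directed edge), while in $\mathrm{Forb}_m$ and $\mathrm{Forb}_{he}$ even one application of $g$ already produces a forbidden two-step chain, the same witness kills EPPA in all three classes. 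Your argument is more elementary and self-contained (it is essentially the classical fact that finite posets fail EPPA, packaged as a finitely constrained class); what it gives up is precisely what the paper is advertising in this subsection, namely that the failure is manufactured from a $1$-local smooth class by transporting its $\leq$-EPPA obstruction into the $\mathrm{Forb}$ setting, a mechanism the paper then generalizes in the Fact that follows. Your opening paragraph's attempt to realize your $F$ as the ``recorded closure operator'' of a $1$-local class with fAP is the one loose part --- it is vague, unnecessary for the statement, and if you enlarged $F$ as suggested there you would need to re-verify that the two-chain survives in the smaller classes --- but the concrete construction in the remainder stands on its own, provided you state explicitly that $F$ contains every isomorphism type of non-poset on at most three points (so induced-substructure forbidding really pins $\mathrm{Forb}_e(F)$ to posets) and carry out the short check, which you sketch, that the two-chain belongs to $\mathrm{Forb}_{he}(F)$ and $\mathrm{Forb}_m(F)$.
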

\begin{proof}
\label{noeppa}
    Let $\mathcal{L} = \{E\}$, $E$ a binary relation. Let $K$ be the class of all finite $\mathcal{L}$-structures. Define the smooth class $(K,\leq)$ as $A\leq B$ if and only if $\forall b\in B-A \forall a\in A (\neg E(a,b))$.  

    Fix $A\in K$ such that $A$ contains two points $a_1, a_2$ so that $A\models E(a_1,a_2)$ and a point $a_3$ which is not $E$-connected to any other point in $A$, including itself. \\
    \textbf{Observation}:  Consider the partial automorphism $f:\{a_1\} \rightarrow \{a_3\}$. Suppose there were some $B\in K$ with $A\leq B$ for which there exists some automorphism $g: B\rightarrow B$ extending $f$. As $E(a_1,a_2)$ holds, but $E(a_3, a)$ fails for every $a\in A$, there must be some $b\in B-A$ such that $B\models E(a_3, b)$. But this contradicts that $A\leq B$. Thus, there is no $B\in K$ with $A\leq B$ for which there is an automorphism $g$ of $B$ extending $f$.\\  
    
    Let $\mathcal{L}_R = \{R\}$ and $\mathcal{L}^\# = \mathcal{L}\cup \mathcal{L}_R$, where $R$ is a binary relation. Extend $A$ to a $\mathcal{L}^\#$-structure $A^\#$ by stipulating that $R$ holds on every 2-tuple of $A$. Let $K_R$ be the class of all finite $\mathcal{L}_R$-structures, and let $K^\# = K \circledast K_R$. 
Define $$F^\# := \{C\in K^\#: C \text{ is a one-point extension of $A^\#$ so that } A^\#|_{\mathcal{L}}\not\leq C|_{\mathcal{L}}\}$$
We prove that $Forb_{he}(F^\#)$ does not have EPPA. The results for $Forb_{m}(F^\#)$ and $Forb_e(F^\#)$ follow by a similar argument.

Suppose $Forb_{he}(F^\#)$ did have EPPA. Note that $A^\# \in Forb_{he}(F^\#)$, as all $C\in F^\#$ are irreducible by definition of $\leq$. Then, there would be some $B^\#\in Forb_{he}(F^\#)$ such that for any partial $\mathcal{L}^\#$-automorphisms $f_1,\dots, f_n$ of $A^\#$ (\textbf{Note:} Here, the domain and range of these partial automorphisms are unrestricted), there exist $g_1,\dots, g_n\in Aut(B^\#)$ so that $g_i\supset f_i$ for all $i$.

Notice that we necessarily have that $A = A^\#|_{\mathcal{L}}\leq B^\#|_{\mathcal{L}}$ by the definition of $\leq$ and $F^\#$. Moreover, every partial automorphism of $A$ is a partial automorphism of $A^\#$. Hence, for $B = B^\#|_{\mathcal{L}}$, $A\leq B$ and $B$ is an EPPA witness for $A$ in the language $\mathcal{L}$.  This is a contradiction to the observation.
\end{proof}
In general, smooth classes give rise to a large variety of classes which cannot have EPPA in the Fraïssé sense.
\begin{fact}
    For a smooth class $(K,\leq)$ with a generic $M$, for $A\in K$, let $K_A$ be the class of all $B\in K$ such that $A\leq B$. If $K_A$ has EPPA, then every partial automorphism of $A$ (where the domain and range are simply subsets of $A$) extends to an automorphism of $M$. If for every $A\in K$, $K_A$ has EPPA, then $M$ is the Fraïssé limit of $(K^+,\subseteq)$ where $K^+$ is the closure of $K$ under substructure. 
\end{fact}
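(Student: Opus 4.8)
The plan is to reduce both assertions to a single standard property of the generic, namely that any isomorphism between finite $\leq$-substructures of $M$ extends to an automorphism of $M$; this is the back-and-forth that already underlies the uniqueness half of \thmref{smoothgeneric}. The one point I would isolate first is the \emph{restriction fact}: if $C\leq M$ and $C\subseteq A'\subseteq M$ with $A'\in K$, then $C\leq A'$ — this holds because $C\leq M$ is witnessed by universal $\mathcal{L}$-formulas (clause (2) of the definition of a smooth class) and universal formulas pass to substructures. Granting this, the homogeneity property is proved by the usual back-and-forth along the defining chain $M=\bigcup_i A_i$: given a finite isomorphism $h_k\colon C_1\to C_2$ with $C_1,C_2\leq M$ and a point $m\in M$, pick $A_j$ with $C_1\cup\{m\}\subseteq A_j$ (so $C_1\leq A_j$ by the restriction fact), transport $A_j$ across $h_k$ to an abstract $\widetilde A\cong A_j$ with $C_2\leq\widetilde A$ (using clause (3) of the definition of a smooth class), and apply \defref{generic}(2) to $C_2\leq M$ and $C_2\leq\widetilde A$ to place $\widetilde A$ $\leq$-inside $M$ over $C_2$, thereby extending $h_k$ to a map whose domain contains $m$; the reverse step is symmetric, and the union is the desired automorphism.

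For the first assertion, fix $A\in K$ and a partial automorphism $f\colon D\to R$ of $A$ with $D,R\subseteq A$. Since $M$ contains a $\leq$-copy of every member of $K$ (the instance $\emptyset\leq M$, $\emptyset\leq A$ of \defref{generic}(2)) we may assume $A\leq M$. Because $A\leq A$ we have $A\in K_A$, so applying the EPPA hypothesis for $K_A$ to the structure $A$ and the single partial automorphism $f$ produces some $B\in K_A$ with $A\subseteq B$ and an automorphism $g$ of $B$ with $g\supseteq f$. As $B\in K_A$ we have $A\leq B$, so \defref{generic}(2) applied to $A\leq M$ and $A\leq B$ yields an embedding $\phi\colon B\to M$ fixing $A$ pointwise with $\phi(B)\leq M$. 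Then $\phi(B)\leq M$, the map $\phi g\phi^{-1}$ is an automorphism of $\phi(B)$, and since $\phi$ fixes $A\supseteq D\cup R$ we have $\phi g\phi^{-1}\supseteq f$. By the homogeneity property, $\phi g\phi^{-1}$ extends to an automorphism of $M$, which therefore extends $f$.

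For the ``moreover'', assume $K_A$ has EPPA for every $A\in K$. First, $\operatorname{age}(M)=K^{+}$: each finite substructure of $M$ lies in some $A_i$ of the defining chain, hence in $K^{+}$, while conversely any member of $K^{+}$ is a substructure of some $D\in K$, and $D$ $\leq$-embeds into $M$, so every member of $K^{+}$ embeds into $M$. Next, $M$ is ultrahomogeneous: given finite $C_1,C_2\subseteq M$ and an isomorphism $h\colon C_1\to C_2$, choose $A_j$ in the defining chain with $C_1\cup C_2\subseteq A_j$, so that $h$ is a partial automorphism of $A_j$ with domain and range subsets of $A_j$; running the argument of the previous paragraph with $A_j$ (which lies in $K_{A_j}$) and $h$ in place of $A$ and $f$ produces an automorphism of $M$ extending $h$. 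Since a countable ultrahomogeneous structure is the Fraïssé limit of its age, $K^{+}$ is a Fraïssé class and $M$ is its Fraïssé limit.

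The only place requiring care is the homogeneity property together with the restriction fact; once these are in hand, both halves are short applications of \defref{generic}(2) to the EPPA witnesses. A secondary subtlety to flag is that the EPPA witness for $K_A$ must be taken \emph{inside} $K_A$, so that $A$ (respectively $A_j$) remains $\leq$ the witness and can hence be $\leq$-embedded into $M$ over the relevant base — without this one cannot invoke \defref{generic}(2) to transport the witnessing automorphism into $M$.
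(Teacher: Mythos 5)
Correct, and essentially the intended argument: the paper states this Fact without proof, and your route --- realize $A$ strongly in $M$, push the $K_A$-EPPA witness $B\in K_A$ into $M$ over $A$ via \defref{generic}(2), and finish with the standard $\leq$-homogeneity of the generic (the same property the paper itself invokes, e.g.\ in the proof of Proposition~\ref{atomicchar}), with the second half then following from ultrahomogeneity of $M$ plus Fraïssé's theorem applied to its age $K^+$ --- is exactly the natural one. Your closing caveat, that the EPPA witness must be taken inside $K_A$ so that $A\leq B$ and genericity can transport the extending automorphism into $M$, is precisely the right point of care (together with the tacit, standard assumption $A\leq A$ making $A$ itself a member of $K_A$).
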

Thus, if the generic $M$ of $(K,\leq)$ does not extend every partial automorphism of $A\in K$, then $K_A$ does not have EPPA, and moreover, no subclass of $K_A$ has EPPA. 
\subsection{Merging with 1-local classes}
\label{mergeeppa}
We are now interested in merging classes with EPPA and investigating the resulting merged class. We first observe the following easy fact:
\begin{fact}
    Suppose for every $n\in \omega$, $(K_1,\leq_1)$ contains a structure $A\in K_1$ for which for all $R\in \mathcal{L}$, $R$ holds on every tuple of $A$, and for every $A_0\subseteq A$, $A_0\leq_1 A$. If $(K_2,\leq_2)$ is another smooth class such that $(K_1\circledast K_2,\leq_*)$ has $\leq_*$-EPPA, then $(K_2,\leq_2)$ has $\leq_2$-EPPA. 
\end{fact}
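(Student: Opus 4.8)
The plan is to reduce $\leq_2$-EPPA for $(K_2,\leq_2)$ to the assumed $\leq_*$-EPPA of the merge. Given an EPPA instance inside $K_2$, I would expand it to an EPPA instance inside $K_1\circledast K_2$ by using a ``complete'' structure supplied by the hypothesis as the $\mathcal{L}_1$-part, solve that instance in the merge, and then take the $\mathcal{L}_2$-reduct of the solution.

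Concretely: fix $B_2\in K_2$ together with partial automorphisms $f_1,\dots,f_k$ of $B_2$ such that $dom(f_i)\leq_2 B_2$ and $range(f_i)\leq_2 B_2$ for each $i$. Applying the standing hypothesis on $(K_1,\leq_1)$ to the integer $|B_2|$, there is $A\in K_1$ with $|A|=|B_2|$ on which every relation of $\mathcal{L}_1$ holds on every tuple and for which every $A_0\subseteq A$ satisfies $A_0\leq_1 A$; since $K_1$ is closed under isomorphism we may take $A$ to have the same universe as $B_2$. Let $B^*$ be the $\mathcal{L}^*$-structure on that common universe with $B^*|_{\mathcal{L}_1}=A$ and $B^*|_{\mathcal{L}_2}=B_2$; then $B^*\in K^*$ by definition of the merge.

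The two observations that make this work are: first, because every $\mathcal{L}_1$-relation holds on every tuple of $A$, any bijection between subsets of $A$ is automatically an $\mathcal{L}_1$-isomorphism, so each $f_i$ — which is already an $\mathcal{L}_2$-isomorphism of its domain onto its range — is an $\mathcal{L}^*$-isomorphism between the corresponding $\mathcal{L}^*$-substructures of $B^*$; and second, since every subset of $A$ is $\leq_1$-strong while $dom(f_i),range(f_i)\leq_2 B_2$, the definition of $\leq_*$ yields $dom(f_i)\leq_* B^*$ and $range(f_i)\leq_* B^*$. Hence $f_1,\dots,f_k$ constitute a legitimate input for $\leq_*$-EPPA of $B^*$, so there is $C^*\in K^*$ with $B^*\leq_* C^*$ and automorphisms $g_1,\dots,g_k$ of $C^*$ with $f_i\subseteq g_i$.

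To finish I would pass to $\mathcal{L}_2$-reducts: set $C_2:=C^*|_{\mathcal{L}_2}$, which lies in $K_2$ by definition of $K^*$; from $B^*\leq_* C^*$ and the definition of $\leq_*$ we get $B_2\leq_2 C_2$, and each $g_i$, being an $\mathcal{L}^*$-automorphism of $C^*$, restricts to an $\mathcal{L}_2$-automorphism of $C_2$ still extending $f_i$. Thus $C_2$ witnesses $\leq_2$-EPPA for $B_2$ and $f_1,\dots,f_k$. I do not expect a genuine obstacle here; the only point requiring care is the bookkeeping that keeps domains and ranges $\leq_*$-strong after expanding to $\mathcal{L}^*$ — which is exactly where the hypothesis that \emph{every} subset of the complete $K_1$-structure is $\leq_1$-strong is used — together with the cardinality match $|A|=|B_2|$ furnished by the ``for every $n$'' clause of the assumption.
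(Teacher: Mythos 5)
Your argument is correct, and since the paper states this as an unproved ``easy fact,'' your reduction is exactly the intended one: expand $B_2$ to a merged structure whose $\mathcal{L}_1$-part is the complete $K_1$-structure (so every subset is $\leq_1$-strong and every bijection is an $\mathcal{L}_1$-isomorphism), apply $\leq_*$-EPPA, and take $\mathcal{L}_2$-reducts. The bookkeeping you flag — matching $|A|=|B_2|$ via the ``for every $n$'' clause and verifying that domains and ranges stay $\leq_*$-strong — is precisely what makes the fact go through, so nothing is missing.
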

The Fraïssé class of equivalence relations under substructure, $(K_E,\subseteq)$, for example, would be an example of a $(K_1,\leq_1)$ as in the above fact. Merges of such classes cannot have any "surprise" EPPA results. For example, any merge $(K^*,\leq_*)$ of $(K_E,\subseteq)$ with a class $(K_2,\leq_2)$ which does not have $\leq_2$-EPPA will not have $\leq_*$-EPPA. 

\bigskip
We now turn to prove some results about classes we know have an EPPA property. 

\bigskip
\textbf{Notation:} For every $n\in \omega$, call $E_n$ an $n$-tuple equivalence relation if $E_n$ is a $2n$-ary relation which is transitive, symmetric, irreflexive on $n$-tuples, and whenever $E_n(\overline{a},\overline{b})$ holds on $n$ tuples $\overline{a}$ and $\overline{b}$, then for any permutation $\overline{b}_\sigma$ of the tuple $\overline{b}$, $E(\overline{a}, \overline{b}_\sigma) $ holds. Let $(K_{E_n},\subseteq)$ denote the class of finite structures with an $n$-tuple equivalence relation in the language $\mathcal{L}_{E_n} = \{E_n\}$. Set $\mathcal{L}_{E_\omega}= \bigcup \{\mathcal{L}_{E_n}: n\in \omega\}$.

We have the following result of  \cite{https://doi.org/10.1002/malq.201400036}:
\begin{theorem}
\thmlab{ivanov} 
    The Fraïssé class $K_{E_\omega} := \circledast_{i\in\omega} K_{E_n}$ in the language $\mathcal{L}_{E_\omega}$ has EPPA. 
\end{theorem}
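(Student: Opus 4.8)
We outline how one can prove this (following the argument of \cite{https://doi.org/10.1002/malq.201400036}). Since EPPA is a statement about finite structures, it suffices to fix a finite $A\in K_{E_\omega}$ together with partial automorphisms $f_1,\dots,f_k$ --- whose domains and ranges are automatically substructures here, as the relation is $\subseteq$ --- and to produce a finite $B\in K_{E_\omega}$ with $A\subseteq B$ and $g_i\in Aut(B)$ extending $f_i$. Because $A$ is finite, $E_n$ is empty on $A$ for every $n>|A|$, so only $E_1,\dots,E_{|A|}$ carry information; we view the $E_n$-structure of $A$ as a colouring $c_n\colon [A]^n\to\mathcal{C}_n$ of the $n$-element subsets of $A$. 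The plan is to reduce the whole statement to the following purely set-theoretic/group-theoretic claim, which is in fact \emph{equivalent} to the existence of a finite EPPA-witness for $(A;f_1,\dots,f_k)$: there exist a finite set $\hat A\supseteq A$ and a finite group $G\leq Sym(\hat A)$ such that (i) each $f_i$ extends to some $g_i\in G$, and (ii) for every $g\in G$ the restriction of $g$ to $\{a\in A : g(a)\in A\}$ preserves every $c_n$ (equivalently, is a partial automorphism of $A$). One direction is trivial: given a witness $B$, take $\hat A=B$ and $G=\langle g_1,\dots,g_k\rangle\leq Aut(B)$, and (ii) holds because any $g\in Aut(B)$ restricted to a substructure contained in $A$ is an $A$-partial automorphism.

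For the other direction, given $(\hat A,G)$ satisfying (i) and (ii), turn $\hat A$ into an $\mathcal{L}_{E_\omega}$-structure by colouring $G$-orbits of subsets: for each $n\leq|A|$ and each $G$-orbit $O$ of $n$-element subsets of $\hat A$, let $\hat c_n(O)$ be the common $c_n$-colour of the members of $O$ that lie inside $A$ if $O$ meets $[A]^n$ --- this is well defined because two such members differ by some $g\in G$, which by (ii) preserves $c_n$ --- and give $O$ a fresh colour otherwise; take $E_n$ empty for $n>|A|$. Each $\hat c_n$ induces a partition of $[\hat A]^n$, hence a legitimate $n$-tuple equivalence relation, so $\hat A\in K_{E_\omega}$; its restriction to $[A]^n$ equals $c_n$ (again by (ii)), so $A\subseteq\hat A$ as structures; and since $\hat c_n$ is constant on $G$-orbits, every element of $G$, in particular each $g_i$, is an automorphism of $\hat A$ extending the corresponding $f_i$. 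Thus $B:=\hat A$ is the desired witness.

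It remains to produce a finite $(\hat A,G)$ satisfying (i) and (ii), and this is the step I expect to be the main obstacle. The infinite version is immediate: the Fraïssé limit $M$ of $K_{E_\omega}$ is $\omega$-categorical (a finite structure in $K_{E_\omega}$ is determined by finitely many partitions of its subsets), each $f_i$ is an isomorphism between substructures of $M$, so ultrahomogeneity extends it to some $\bar g_i\in Aut(M)$; then $G_0=\langle\bar g_1,\dots,\bar g_k\rangle$ and $\hat A_0=G_0\cdot A$ satisfy (i) and (ii) since every $g\in G_0$ is a genuine automorphism of $M$ --- but $\hat A_0$ and $G_0$ are infinite. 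One must \emph{finitize} this. For $n=1$ (ordinary equivalence relations) the finitization is elementary: blow every $E_1$-class of $A$ up to a common finite size $m$, let $\hat A$ be the disjoint union of the blown-up classes and $G\cong Sym(m)\wr Sym(N)$ (with $N$ the number of classes) acting in the obvious way, and extend each $f_i$ class by class; conditions (i) and (ii) are then checked directly. For $n\geq 2$ the naive blow-up over $A$ fails --- the new points must be related to $A$ "generically", not merely as copies of old points --- and one instead needs a Herwig--Lascar-style construction of a finite group action realizing the $f_i$ without manufacturing any new partial automorphism of $A$, and doing so for all of $E_1,\dots,E_{|A|}$ simultaneously. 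This cannot be obtained as a corollary of \thmref{freeEppa}, because $K_{E_\omega}$ has neither free amalgamation nor a presentation by unary functions; carrying out the finitization by hand is precisely the combinatorial core of the argument of \cite{https://doi.org/10.1002/malq.201400036}, and it is this argument that is adapted in the sequel.
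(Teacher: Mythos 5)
Your reduction of EPPA for $K_{E_\omega}$ to the existence of a finite pair $(\hat A,G)$ satisfying (i) and (ii), together with the orbit-colouring argument converting such a pair back into a witness $B\in K_{E_\omega}$, is correct. But the proposal stops exactly where the proof has to begin: you yourself identify the production of a finite $(\hat A,G)$ as ``the main obstacle'' and defer it to the cited reference. That finitization is the entire content of the theorem --- the infinite version via the Fra\"iss\'e limit that you describe is indeed immediate and yields nothing finite, and the $n=1$ blow-up does not touch the general case --- so as written there is a genuine gap: no proof of the statement is actually given, only an equivalent reformulation of it.

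Moreover, your claim that the result ``cannot be obtained as a corollary of \thmref{freeEppa}'' misses the key idea of the argument this paper follows (and generalizes in \thmref{ivanovVar}). It is true that $K_{E_\omega}$ itself has neither free amalgamation nor a unary-function presentation, but the proof changes the language first: on the given finite $A$ each $E_n$ has finitely many classes, which are renamed by $n$-ary predicates $P_{n,i}$; one passes to the class of structures in which the $P_{n,i}$ are pairwise disjoint, symmetric and irreflexive --- and this class \emph{does} have free amalgamation --- while the original partial automorphisms become partial permorphisms with respect to the finite group $\Gamma_{\mathcal{L}}$ of arity-preserving permutations of the new predicates (they may permute the class names). \thmref{freeEppa}, which is stated precisely for $\Gamma_{\mathcal{L}}$-EPPA of free amalgamation classes, then supplies the witness; tuples lying in no predicate are absorbed into fresh predicates, and collapsing ``same predicate'' back to ``$E_n$-equivalent'' returns a genuine EPPA witness in $K_{E_\omega}$. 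In other words, the finite group action you are missing is exactly what \thmref{freeEppa} provides after this change of language and the passage to permorphisms; that detour, not a by-hand Herwig--Lascar-style construction, is the idea your write-up needs in order to be complete.
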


The fact that the class $K_{E_\omega}$ has EPPA  is interestingly not immediately covered by basic EPPA results that use free amalgamation or forbidden classes. The proof of \thmref{ivanov} proceeds by changing the language, extending to a merge of classes, proving the merge has a permorphism EPPA property, then proving the restriction back to $K_{E_\omega}$ has EPPA. It is reminiscent of the operation we applied to $1$-local smooth classes to attain a class of extended structures, though here the goal is to gain a class with free amalgamation. At its core, the proof boils down to an application of \thmref{freeEppa}. Using this, we prove a generalization:

\begin{theorem}
\thmlab{ivanovVar}
Let $(K_{E_\omega}, \subseteq)$ be the Fraïssé class defined above. Let $(K_2,\leq_2)$ be any $1$-local smooth class with fAP in a countable relational language. Then, the merge $K_0 := K_{E_\omega} \circledast K_2$ in the language $\mathcal{L}_0:= \mathcal{L}_{E_\omega}\cup \mathcal{L}_2$ has $\leq_*$-EPPA.
\end{theorem}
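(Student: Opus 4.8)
The plan is to reduce \thmref{ivanovVar} to \thmref{freeEppa} by ``Fraïssé-ifying'' both factors of the merge into a single free amalgamation class of extended structures, combining Construction \ref{construction} (applied to the $1$-local factor $K_2$) with the language-change encoding used in the proof of \thmref{ivanov} (applied to $K_{E_\omega}$). Note first why the easy route fails: $K_0 = K_{E_\omega}\circledast K_2$ is $1$-local (merges of $1$-local classes are $1$-local, and $K_{E_\omega}$, being Fraïssé, is $1$-local), and it has dAP (both factors are closed under substructure with dAP, so $\{K_{E_\omega},K_2\}$ has uniform dAP and the merge has dAP as in the proof of \thmref{mergeexistencethm}); but $K_0$ does \emph{not} have fAP, since its restriction to $\mathcal{L}_{E_\omega}$ would then inherit fAP, which the class of ($n$-tuple) equivalence relations lacks. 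So \thmref{1local} does not apply, and EPPA does not obviously pass through merges --- we must descend to the level at which \thmref{freeEppa} operates. Observe that $\leq_*$ restricts to $\subseteq$ on $\mathcal{L}_{E_\omega}$ and to $\leq_2$ on $\mathcal{L}_2$, and that for $A\subseteq B$ in $K_0$ the closure $cl_B(A)$ agrees with the closure computed in the $\mathcal{L}_2$-reduct (the $\mathcal{L}_{E_\omega}$-side contributes nothing to $cl$, as $\leq$ is $\subseteq$ there).

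Next, form the combined encoding. By Construction \ref{construction}, $(K_2,\leq_2)$ yields an extended free amalgamation class $(\mathcal{K}_2^F,\preceq)$ in an extended language $\mathcal{L}_2^F$ with only unary functions, and by Claim \ref{preceqpreserve}, $\mathcal{A}^F\preceq\mathcal{B}^F \Leftrightarrow A\leq_2 B$. Since the proof of \thmref{ivanov} ultimately invokes \thmref{freeEppa}, it produces a free amalgamation class $(\mathcal{K}_{E_\omega}^{\mathrm{enc}},\preceq)$ of (extended) structures in a relational language with at most unary functions, together with a finite permutation group $\Gamma$ fixing all the added symbols, so that $\Gamma$-EPPA of $\mathcal{K}_{E_\omega}^{\mathrm{enc}}$ back-translates to EPPA of $K_{E_\omega}$, matching partial automorphisms of a structure in $K_{E_\omega}$ with extended partial $\Gamma$-permorphisms of its encoding. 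Take the two new languages disjoint and set $\mathcal{K}_0 := \mathcal{K}_{E_\omega}^{\mathrm{enc}}\circledast\mathcal{K}_2^F$, extending $\Gamma$ to act as the identity on $\mathcal{L}_2^F$. Exactly as merges preserve fAP (the remark following \thmref{mergeexistencethm}), the merge of two free amalgamation classes of extended structures in disjoint languages with only unary functions is again such a class: amalgamate freely in each reduct over the common part and take the union; tuples crossing the two factors carry no relation and lie in no function domain in either reduct. The extended $\Gamma$ is still a finite permutation group fixing the added symbols, and it is closure-preserving on the $\mathcal{K}_2^F$-coordinate because it is the identity there (Lemma \ref{triveppa}). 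Hence \thmref{freeEppa} gives that $\mathcal{K}_0$ has $\Gamma$-EPPA.

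It remains to translate back. A structure $A\in K_0$ corresponds to $\widehat A\in\mathcal{K}_0$ by encoding its $\mathcal{L}_{E_\omega}$-reduct as in \thmref{ivanov} and its $\mathcal{L}_2$-reduct as in Construction \ref{construction}; a $\leq_*$-strong partial automorphism $f$ of $A$ has domain and range strong in both reducts, hence lifts to an extended partial $\Gamma$-permorphism $\widehat f$ of $\widehat A$ whose domain and range are $\preceq$-substructures (Claim \ref{preceqpreserve} on the $\mathcal{L}_2^F$-coordinate, the analogous fact from the proof of \thmref{ivanov} on the encoded $\mathcal{L}_{E_\omega}$-coordinate). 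Applying $\Gamma$-EPPA of $\mathcal{K}_0$ simultaneously to a finite family $f_1,\dots,f_n$ yields $\widehat B\in\mathcal{K}_0$ with $\widehat A\preceq\widehat B$ and full $\Gamma$-permorphisms extending the $\widehat f_i$; this $\widehat B$ descends to $B\in K_0$ with $A\leq_* B$, and the permorphisms descend to automorphisms of $B$ extending the $f_i$ --- on the $\mathcal{L}_2$-coordinate this is precisely the argument of Proposition \ref{convertEPPA}, on the $\mathcal{L}_{E_\omega}$-coordinate the back-translation built into the proof of \thmref{ivanov}. This gives $\leq_*$-EPPA for $K_0$. The main obstacle is the bookkeeping of the last paragraph together with Step~(3): verifying that the correspondence between $\leq_*$-strong partial automorphisms of structures in $K_0$ and extended partial $\Gamma$-permorphisms of their encodings is a genuine bijection respecting ``strong/extended-substructure'' on \emph{both} coordinates at once, and that the permutation group $\Gamma$ from \thmref{ivanov} interacts correctly with the unary closure-functions of Construction \ref{construction} (harmless here since $\Gamma$ fixes $\mathcal{L}_2$ pointwise, so closure-preservation is Lemma \ref{triveppa}) --- and confirming that the \thmref{ivanov}-encoding indeed stays within the ``only unary functions'' regime required by \thmref{freeEppa}.
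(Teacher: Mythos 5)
Your overall strategy coincides with the paper's: encode the $n$-tuple equivalence relations by class-naming predicates, fold in the $1$-local factor via the extended-structure conversion, apply \thmref{freeEppa} with a permutation group fixing $\mathcal{L}_2$ (so closure-preservation is the trivial Lemma \ref{triveppa} argument), and decode. The ordering difference (encode each factor and then merge, versus the paper's merging of the relational encoding class $K^\#$ with $K_2$ itself and then invoking Proposition \ref{convertEPPA} on the merged $1$-local class) is immaterial, since the closure operator of $K_0$ lives entirely on the $\mathcal{L}_2$-side. However, you treat the encoding of $K_{E_\omega}$ from \thmref{ivanov} as a black box with properties that are not correct as stated, and those are exactly the points where the proof has content. (i) The group $\Gamma$ cannot ``fix all the added symbols'': a partial automorphism of $A_0$ may carry one $E_m$-class onto a different one, so its lift to the encoded structure is only a $\gamma$-permorphism for a $\gamma$ that \emph{permutes} the class-predicates $P_{m,r}$; if $\Gamma$ fixed these symbols, only class-preserving partial automorphisms would lift and the back-translation would not give EPPA for $K_0$. (ii) There is no single finite $\Gamma$ serving the whole class: $\mathcal{L}_{E_\omega}$ has infinitely many $E_n$ and structures have unboundedly many classes, so the full encoding language is infinite and its arity-preserving permutation group is infinite, while \thmref{freeEppa} demands a finite group. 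The paper therefore works per instance: given $A_0$ and $f_1,\dots,f_n$, it takes the finite sublanguage $\mathcal{L}'$ consisting of $\mathcal{L}_2$ together with the predicates $P_{m,r}$ naming the classes actually present in $A_0$, sets $\Gamma_{\mathcal{L}'}$ to be all arity-preserving permutations of $\mathcal{L}'-\mathcal{L}_2$, and applies Proposition \ref{convertEPPA} and \thmref{freeEppa} to the merged class $K(\mathcal{L}')$, which is $1$-local with fAP. Your construction must be rebuilt at this instance-dependent, finite-sublanguage level.

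(iii) The decoding step is not automatic. The $\Gamma_{\mathcal{L}'}$-EPPA witness $B$ may contain irreflexive tuples lying in no $P$-predicate, so the paper first completes $B$ to $B'$ by adjoining, for each arity $\ell\leq |B|$, a fresh predicate $P_{\ell,r}$ absorbing all uncovered $\ell$-tuples, checks that the extending permorphisms remain permorphisms of $B'$ (they fix the fresh symbols and permute uncovered tuples among themselves), and only then reads off $B_0\in K_0$, verifying $A_0\leq_* B_0$ (the completion does not touch $\mathcal{L}_2$, so $\leq_2$-strongness survives) and that the permorphisms become genuine automorphisms of $B_0$ extending the $f_i$. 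Your sketch defers all of this to ``the back-translation built into the proof of \thmref{ivanov}''; it does go through, but it must be carried out inside the merged setting rather than quoted, since one has to see simultaneously that the $\mathcal{L}_{E_\omega}$-decoding works and that the $\mathcal{L}_2$-structure and strongness are untouched. So the architecture is right, but items (i)--(iii) are the actual proof, not bookkeeping.
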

\begin{proof}
Note that, in this merge, $A\leq_* B \Leftrightarrow A\leq_2 B \;\& \; A|_{\mathcal{L}_{E_{\omega}}}\subseteq B|_{\mathcal{L}_{E_\omega}}$. 

Define a new language $\mathcal{L}^\# = \{P_{n,i}: i,n\in \omega\}$ where $P_{n,i}$ is an $n$-ary relation, and let $K^\#$ be all $\mathcal{L}^\#$ structures such that $P_{n,i}\cap P_{n,j} = \emptyset$ whenever $i\neq j$; $P_{n,i}(a_1,\dots, a_n) \Leftrightarrow P_{n,i}(a_{\sigma(1)},\dots, a_{\sigma(n)})$ for any $\sigma\in Sym(n)$; and every $P_{n,i}$ is an irreflexive relation (i.e., $P_{n,i}$ does not hold on a tuple with repeated elements). 

Now, let $K^* = K^\#\circledast K_2$, and let $\mathcal{L}^* = \mathcal{L}^\#\cup \mathcal{L}_2$. 
Let $\mathcal{L}'$ be any set such that $\mathcal{L}_2\subseteq \mathcal{L}'\subseteq \mathcal{L}^*$ and $\mathcal{L}'- \mathcal{L}_2$ is finite. Define $K(\mathcal{L}')$ to be all $\mathcal{L}'$ structures which are in $K^*$.  Let $\Gamma_{\mathcal{L}'}$ be all arity-preserving permutations of the language $\mathcal{L}'-\mathcal{L}_2$. The first observation we make is that $K(\mathcal{L}')$ has free amalgamation with respect to $\leq_*$. Since every $\gamma\in \Gamma_{\mathcal{L}'}$ fixes $\mathcal{L}_2$, by \thmref{freeEppa} and Theorem \ref{convertEPPA}, $K(\mathcal{L}')$ has $\Gamma_{\mathcal{L}'}$-$\leq_*$-EPPA. 

\bigskip
We now choose any $A_0\in K_0$ and any partial automorphisms $f_1,\dots, f_n$ of $A_0$ for which $dom(f_i)$ and $range(f_i) \leq_* A_0$ for $i\leq n$. There is a maximum $q$ for which $E_q$ is defined on $A_0$. Thus, for $m\leq q$ we can enumerate all the classes of each $E_m$ which is satisfied on $A_0$. Suppose for each $m\leq q$, $E_m$ has $j_m$ many classes in $A$. We will  then set $\mathcal{L}' := \mathcal{L}_2\cup \{P_{m,r}: m\leq q, \; r\leq j_m\}$. 

On the universe of $A_0$, we define an $\mathcal{L}'$-structure $A$ so that $A \models P_{k,i}(a_1,\dots, a_k)$ if and only if $(a_1,\dots, a_k)$ is in the $i$th equivalence class of $E_k$ in $A_0$. We also require that $A|_{\mathcal{L}_2}$ is precisely $A_0|_{\mathcal{L}_2}$. Notice that for each $i$, $f_i$ is a $\gamma$-permorphism of $A$ for some $\gamma\in \Gamma_{\mathcal{L}'}$ and permutes only symbols in $\mathcal{L}' - \mathcal{L}_2$.

We automatically get that $A \in K(\mathcal{L}')$. $A$ has a $\Gamma_{\mathcal{L}'}$-$\leq_*$- EPPA witness $B\in K(\mathcal{L}')$ such that $A\leq_* B$ and there exist $g_1,\dots, g_n$ permorphisms of $B$ such that $g_i\supset f_i$. For each $\ell \leq |B|$, we can find some $r\in\omega$ such that $P_{\ell, r}$ is an $\mathcal{L}^*$ - relation not yet appearing in $\mathcal{L}'$. We will expand $B$ to a $\mathcal{L}'\cup \{P_{\ell, r}: \ell\leq |B|\}$ structure as follows:

For any irreflexive tuple $(b_1,\dots, b_\ell)$ of $B$, if $(b_1,\dots, b_\ell)$ does not hold on any relation in $\mathcal{L}'$, then we require that $P_{\ell,r}(b_1,\dots, b_\ell)$ holds in the expansion of $B$, call it $B'$. Suppose $g$ is a permorphism of $B$. Notice that because all tuples which did not belong to a relation in $B$ now belong to the same relation in $B'$, and since, for each $\ell$ and $r$, $g$ must fix $P_{\ell,r}$, $g$ must be a permorphism of $B'$ as well.  

\bigskip
We will define an $\mathcal{L}_0$-structure $B_0$ with the same universe as $B'$ such that for all $i$, and any $i$-ary tuples $\overline{a}$, $\overline{b}$ from $B'$, $B_0 \models E_i(\overline{a}, \overline{b})$ if and only if $B'\models P_{i,j}(\overline{a}) \land P_{i,j}(\overline{b})$ for some $j$. Moreover,  we set $B_0|_{\mathcal{L}_2} = B|_{\mathcal{L}_2}$. As every tuple of $B_0$ satisfies an appropriate equivalence relation, $B_0\in K_0$. 

Because we have preserved the $\mathcal{L}_2$-structure in our construction, $A_0 \leq_2 B_0$ and $A_0\subseteq B_0$ with respect to the language $\mathcal{L}_{E_\omega}$. Thus, $A_0 \leq_* B_0$. Now, any permorphism $g_i$ of $B'$ is clearly an automorphism of $B_0$, as 
$$B_0 \models E_n(g_i(\overline{b}), g_i(\overline{a})) \Leftrightarrow  \exists t \; B'\models P_{n,t}(g_i(\overline{a}))\land P_{n,t}( g_i(\overline{b})) \Leftrightarrow \exists  w \; B'\models P_{n,w}(\overline{a})\land P_{n,w}(\overline{b}) \Leftrightarrow B_0\models E_n(\overline{a}, \overline{b})$$
And, moreover, it is easy to see that $g_i\supseteq f_i$, the original partial automorphisms of $A_0$. Thus, $B_0$ is a $\leq_*$-EPPA witness for $A_0$.
\end{proof}

Using the argument in Remark \ref{complement}
\begin{corollary}
Let $(K_{E_\omega}, \subseteq)$ be the Fraïssé class defined above. Suppose $(K_2,\leq_2)$ is a (possibly infinite) merge of 1-local classes with fAP and/or their complements, then $(K_{E_\omega} \circledast K_2, \leq_*)$ has $\leq_*$-EPPA.
\end{corollary}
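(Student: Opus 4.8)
The plan is to strip the complements off the complemented factors of $(K_2,\leq_2)$, apply \thmref{ivanovVar} to the resulting merge of $1$-local classes with fAP, and then reinstate the complements via the last assertion of Remark \ref{complement}. Write $(K_2,\leq_2)=\circledast_{i\in I}(K_i,\leq_i)$ with $I=I_1\sqcup I_2$, where for $i\in I_1$ the class $(K_i,\leq_i)$ is $1$-local with fAP in $\mathcal{L}_i$, and for $i\in I_2$ we have $K_i=L_i^{\,\sim}$ for some $1$-local class $(L_i,\leq_{L_i})$ with fAP in $\mathcal{L}_i$ (with $\leq_i$ the corresponding complemented relation). All of the languages $\mathcal{L}_{E_\omega}$ and $\mathcal{L}_i$ are pairwise disjoint and countable, so $I$ is countable and every language appearing below is countable and relational; in particular the EPPA statements make sense since $\leq_*$-EPPA (\defref{eppa}) is a purely finitary condition and needs no generic.

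Regroup the factors as $A:=K_{E_\omega}\circledast\bigl(\circledast_{i\in I_1}K_i\bigr)$ and $C:=\circledast_{i\in I_2}L_i$, smooth classes in the disjoint languages $\mathcal{L}_A:=\mathcal{L}_{E_\omega}\cup\bigcup_{i\in I_1}\mathcal{L}_i$ and $\mathcal{L}_C:=\bigcup_{i\in I_2}\mathcal{L}_i$. Since complementation acts relation by relation and the merge relation is merely the conjunction of the component relations, complementation distributes over $\circledast$ and transports the component relations correctly; hence $C^{\,\sim}=\circledast_{i\in I_2}(L_i^{\,\sim})=\circledast_{i\in I_2}K_i$, and therefore, using associativity and commutativity of $\circledast$,
$$K_{E_\omega}\circledast K_2\;=\;A\circledast C^{\,\sim}.$$
Applying the last assertion of Remark \ref{complement} with ``$K$''$\,=C$ and ``$K_1$''$\,=A$ (both smooth classes, with disjoint languages), the class $A\circledast C^{\,\sim}$ has $\leq_*$-EPPA if and only if $A\circledast C$ does, that is, if and only if $K_{E_\omega}\circledast\tilde{K}_2$ has $\leq_*$-EPPA, where $\tilde{K}_2:=\bigl(\circledast_{i\in I_1}K_i\bigr)\circledast\bigl(\circledast_{i\in I_2}L_i\bigr)$.

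Finally, every factor of $\tilde{K}_2$ is $1$-local with fAP, so by the earlier propositions --- a merge of $1$-local classes is $1$-local, and a merge of classes with fAP has fAP, both established factor by factor and hence valid for the possibly infinite merge here --- $\tilde{K}_2$ is a $1$-local smooth class with fAP in a countable relational language. Then \thmref{ivanovVar} yields that $K_{E_\omega}\circledast\tilde{K}_2$ has $\leq_*$-EPPA, and by the equivalence above so does $K_{E_\omega}\circledast K_2$. The only point requiring care is the bookkeeping that allows Remark \ref{complement}, stated for a single complemented factor, to be invoked for a whole bundle of complemented factors at once; this is exactly the distributivity $\bigl(\circledast_{i\in I_2}L_i\bigr)^{\,\sim}=\circledast_{i\in I_2}(L_i^{\,\sim})$ recorded above, and the rest is a direct appeal to \thmref{ivanovVar} together with the stability of $1$-locality and fAP under (possibly infinite) merges.
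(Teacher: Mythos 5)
Your argument is correct and is essentially the proof the paper has in mind: the paper's one-line justification ("using the argument in Remark \ref{complement}") is exactly your scheme of replacing the complemented factors by their uncomplemented versions and then invoking \thmref{ivanovVar} together with the closure of $1$-locality and fAP under merges. Your explicit observation that complementation distributes over the merge, so that Remark \ref{complement} can be applied to all complemented factors in one step (rather than iterating over a possibly infinite family), is a useful piece of bookkeeping that the paper leaves implicit.
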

We can generalize the proof above slightly:
\begin{definition}
For $K_{E_\omega}$ defined as above, call a smooth class $(K_{E_\omega}, \leq_1)$ \textit{separably 1-local} if there exists some partition $\omega = I\sqcup J$ where for all $A, B\in \circledast_{j\in J} K_{E_j}$, $A\leq_1 B \Leftrightarrow A\subseteq B$,  and on the class $\circledast_{i\in I} K_{E_i}$, $\leq_1$ is a 1-local relation. 
\end{definition}
We can apply the argument in the proof above to a separably 1-local class $(K_{E_\omega}, \leq_1)$ which has fAP by cutting out $\circledast_{i\in I} K_{E_i}$ and merging it in with a 1-local smooth class $(K_2,\leq_2)$ to get 
\begin{proposition}
For a smooth class separably 1-local class $(K_{E_\omega}, \leq_1)$ with fAP and any 1-local smooth class $(K_2, \leq_2)$ with fAP, $(K_{E_\omega}\circledast K_2, \leq_*)$ has $\leq_*$-EPPA.
\end{proposition}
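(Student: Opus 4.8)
The plan is to reduce to \thmref{ivanovVar} by regrouping the factors of the merge. By the definition of separably $1$-local, fix a partition $\omega = I\sqcup J$ witnessing separability, and set $K_J := \circledast_{j\in J}K_{E_j}$, on which $\leq_1$ is just $\subseteq$, and $K_I := \circledast_{i\in I}K_{E_i}$, on which (the restriction of) $\leq_1$ is $1$-local. Reading $\leq_1$ coordinate-by-coordinate in the tuple-arity index presents $(K_{E_\omega},\leq_1)$ as the merge $K_J\circledast K_I$, so that $K_{E_\omega}\circledast K_2 = K_J\circledast K_I\circledast K_2$.

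First I would check that $(K_I,\leq_1)$ inherits fAP from $(K_{E_\omega},\leq_1)$: given $A\leq_1 B$ and $A\leq_1 C$ in $K_I$ with $A=B\cap C$, pad each structure by the empty $E_j$-relation for every $j\in J$, landing in $K_{E_\omega}$ without disturbing any $\leq_1$-relation; apply fAP of $(K_{E_\omega},\leq_1)$; then restrict the resulting free amalgam to the sublanguage $\{E_i:i\in I\}$. Next, I would set $(K_2',\leq_2'):=K_I\circledast K_2$. Since a merge of $1$-local classes is again $1$-local, and since fAP passes to merges (the remark following \thmref{mergeexistencethm}), $(K_2',\leq_2')$ is a $1$-local smooth class with fAP in a countable relational language --- exactly the shape of class permitted as the second factor in \thmref{ivanovVar}.

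Finally I would run the proof of \thmref{ivanovVar} on $K_J\circledast K_2'$, which is literally $K_{E_\omega}\circledast K_2$. The only feature of the tuple-equivalence factor that proof uses is that it is a merge, over some countable index set, of classes of structures carrying one symmetric, irreflexive, transitive tuple-equivalence relation under $\subseteq$; replacing the index set $\omega$ there by $J$ changes nothing. So the argument goes through mutatis mutandis: given $A_0$ and partial automorphisms with strong domains and ranges, recode each $E_n$-class ($n\in J$) of $A_0$ by a fresh symmetric, irreflexive $n$-ary predicate $P_{n,i}$; merge this predicate structure with $(K_2',\leq_2')$, obtaining a class that is still $1$-local with fAP, so that by Proposition~\ref{convertEPPA} and \thmref{freeEppa} it has $\Gamma$-$\leq_*$-EPPA for the group $\Gamma$ of arity-preserving permutations of the finitely many $P_{n,i}$ in play (a group fixing all of $\mathcal{L}_2'$); take the witness; pad all still-unrelated tuples into fresh predicates so that permorphisms extend to automorphisms; and collapse the $P_{n,i}$'s back to the $E_n$'s ($n\in J$) while keeping the $\mathcal{L}_2'$-structure fixed. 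The result is a $\leq_*$-EPPA witness for $A_0$ inside $K_{E_\omega}\circledast K_2$.

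I expect the work to be bookkeeping rather than a new idea, and the one genuinely load-bearing point is that absorbing $K_I$ into the ``second factor'' $K_2'$ is harmless for the \thmref{ivanovVar} machinery: $\Gamma$ fixes every symbol of $\mathcal{L}_2'$, so it never touches the $E_i$ with $i\in I$, and the padding step only ever introduces relations among the new predicates $P_{n,i}$, so the $\mathcal{L}_2'$-reduct is preserved verbatim throughout. The hypotheses enter exactly at the inheritance claim --- that $(K_I,\leq_1)$ has fAP --- which uses both the separably-$1$-local structure and the standing assumption that $(K_{E_\omega},\leq_1)$ has fAP.
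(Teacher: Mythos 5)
Your proposal is correct and follows essentially the same route as the paper: the paper's own argument is precisely to use the separably $1$-local partition $\omega = I\sqcup J$, absorb $\circledast_{i\in I}K_{E_i}$ into the $1$-local factor $(K_2,\leq_2)$ (merges of $1$-local classes with fAP remain $1$-local with fAP), and rerun the proof of \thmref{ivanovVar} with the tuple-equivalence part indexed by $J$ instead of $\omega$. Your write-up merely makes explicit some bookkeeping the paper leaves implicit (e.g.\ that $(K_I,\leq_1)$ inherits fAP and that the permutation group fixes all of $\mathcal{L}_2'$), which is consistent with the intended proof.
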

An easy example of a separably 1-local class $(K_{E_\omega}, \leq_1)$ is to define for $A, B\in K_{E_\omega}$ $$A\leq_1 B \Leftrightarrow \text{for $i>1$, } A|_{\mathcal{L}_{E_i}} \subseteq B|_{\mathcal{L}_{E_i}} \; \& \; \forall b\in B-A \;\forall a\in A (\neg E_1(a,b))$$

\BibTexMode{%
   \bibliographystyle{alpha}
   \bibliography{template}
}%
\BibLatexMode{\printbibliography}

\end{document}